\newcommand{\attractor}[1]{}
\newtheorem{assumption}{Assumption}
\newtheorem{prop}{Proposition}
\newtheorem{definition}{Definition}
\newtheorem{theorem}{Theorem}
\newtheorem{coro}{Corollary}
\newtheorem{lemma}{Lemma}
\newtheorem{remark}{Remark}
\newcommand{\eqdef}{:=}
\newcommand{\un}{\mathds{1}}
\newcommand{\ps}[1]{\langle #1\rangle}
\newcommand{\bP}{{\mathbb P}}
\newcommand{\bE}{{\mathbb E}}
\newcommand{\bR}{{\mathbb R}}
\newcommand{\bI}{{\mathbb I}}
\newcommand{\bN}{{\mathbb N}}
\newcommand{\cA}{{\mathcal A}}
\newcommand{\cI}{{\mathcal I}}
\newcommand{\cB}{{\mathcal B}}
\newcommand{\cV}{{\mathcal V}}
\newcommand{\cC}{{\mathcal C}}
\newcommand{\cD}{{\mathcal D}}
\newcommand{\cS}{{\mathcal S}}
\newcommand{\cP}{{\mathcal P}}
\newcommand{\cG}{{\mathcal G}}
\newcommand{\cM}{{\mathcal M}}
\newcommand{\cN}{{\mathcal N}}
\newcommand{\cF}{{\mathcal F}}
\newcommand{\cK}{{\mathcal K}}
\newcommand{\sV}{{\mathsf V}}
\newcommand{\sW}{{\mathsf W}}
\DeclareMathOperator{\BC}{BC} 
\DeclareMathOperator{\tr}{tr} 
\DeclareMathOperator{\supp}{supp} 
\DeclareMathOperator*{\acc}{acc} 
\DeclareMathOperator{\AC}{AC} 
\DeclareMathOperator{\Tan}{Tan}
\newcommand{\dr}{{d}}
\newcommand{\6}{\partial}
\newcommand\esp[1]{\mathbb{E}\left[#1\right]} 
\newcommand{\indicatrice}{\mathds{1}}
\newcommand\dd[2]{\frac{{d} #1}{{d} #2}}
\newcommand\p[1]{\left( #1 \right)}
\newcommand{\espcond}[2]{\mathbb{E}\mathopen{}\left[#1\middle|#2\right]}
\newcommand{\proba}{\mathbb{P}}
\newcommand\norm[1]{\left\lVert #1 \right\rVert}
\newcommand\abs[1]{\left\lvert #1 \right\rvert}
\newcommand{\pb}[1]{}
\begin{document}

\pagestyle{plain}

\title{Long run convergence of discrete-time  interacting particle systems of the McKean-Vlasov type \footnote{This is the accepted version of the article published in \textit{Stochastic Processes and their Applications}.\\
 The final version is available at: \url{https://www.sciencedirect.com/science/article/pii/S0304414925000882}}}

\author[1]{{Pascal}~{Bianchi}}
\author[2]{{Walid}~{Hachem}}
\author[1]{{Victor}~{Priser}}
\affil[1]{LTCI, T\'el\'ecom Paris}

\affil[2]{Laboratoire d’informatique Gaspard Monge (LIGM / UMR 8049),
Universit\'e Gustave Eiffel}


\maketitle


\begin{abstract}
  We consider a discrete time system of $n$ coupled random vectors, a.k.a. interacting particles.
  The dynamics involves a vanishing step size, some random centered perturbations,
  and a mean vector field which induces the coupling between the particles.
  We study the doubly asymptotic regime where both the number of iterations and the number $n$ of particles tend to infinity,
  without any constraint on the relative rates of convergence of these two parameters.
  We establish that the empirical measure of the interpolated trajectories of the particles converges in probability,
  in an ergodic sense, to the set of recurrent Mc-Kean-Vlasov distributions.
  A first application example is the granular media equation, where the particles are shown
  to converge to a critical point of the Helmholtz energy.
  A second  example  is the convergence of stochastic gradient descent 
  to the global minimizer of the risk, in a wide two-layer neural networks using random features.
\end{abstract}

\tableofcontents

\section{Introduction}
{Given two integers $n,d > 0$, consider the iterative algorithm defined as
follows. Starting with the $n$--uple $(X_0^{1,n},\dots, X_0^{n,n})$ of random
variables $X_0^{i,n} \in \bR^d$, the algorithm generates at the iteration $k+1$
for $k \in \bN$ the $n$--uple of $\bR^d$--valued random variables
$(X_k^{1,n},\dots, X_k^{n,n})$, referred to as  the \emph{particles}, according
to the dynamics:
\begin{equation}
  \label{eq:main}
  X_{k+1}^{i,n} = X_k^{i,n} 
  +{\gamma_{k+1}} b(X_{k}^{i,n},\mu_k^n)
  +  \sqrt{2\gamma_{k+1}}\xi_{k+1}^{i,n} 
  + \gamma_{k+1}\zeta_{k+1}^{i,n}\,, 
\end{equation}
for each $i\in [n]$ where $[n] \eqdef \{1,\ldots, n\}$, where
\begin{equation}
  \label{eq:muk}
  \mu_k^n := \frac 1n \sum_{i=1}^n \delta_{X_k^{i,n}} \,.
\end{equation}
In this equation, $b:\bR^d\times\cP_p(\bR^d)\to\bR^d$ is a continuous vector field, where, for some $p\in[1,2]$,
$\cP_p(\bR^d)$ is the space of probability measures with a finite $p$-th order, equipped with
the Wasserstein distance.
Moreover, $(\gamma_k)_k$ is a vanishing sequence of deterministic positive step
sizes, $((\xi_{k}^{i,n})_{i\in[n]})_{k\in \bN^*}$ and
$((\zeta_{k}^{i,n})_{i\in[n]})_{k\in \bN^*}$ are $\bR^{d\times n}$--valued
random noise sequences in the time parameter $k$.  We assume that for each $n$,
the $n$--uple $(X_0^{1,n},\dots, X_0^{n,n})$ is exchangeable. We also assume the exchangeability of the $n$--uple of
sequences $((\xi^{1,n}_k)_{k\in\bN^*}, \ldots,
(\xi^{n,n}_k)_{k\in\bN^*})$  and 
$((\zeta^{1,n}_k)_{k\in\bN^*}, \ldots, (\zeta^{n,n}_k)_{k\in\bN^*})$.
Defining, for each $n > 0$, the filtration 
$(\mathcal F_k^n)_{k\in\bN}$ as: 
\begin{equation}
\mathcal{F}_{k}^n := \sigma(
(X_0^{i,n})_{i\in[n]}, ((\xi_\ell^{i,n})_{i\in[n]})_{\ell\leq k},
((\zeta_\ell^{i,n})_{i\in[n]})_{\ell\le k}), \label{eq:filtration}
\end{equation}
we assume that for each $n$, the sequence
$((\xi_k^{i,n})_{i\in[n]})_k$ is a $(\mathcal F_k^n)_k$--martingale increment
sequence \emph{i.e.,} $\bE(\xi_{k+1}^{i,n}|\cF_k^n)=0$. Finally, we assume that
$$
\bE(\xi_{k+1}^{i,n}(\xi_{k+1}^{j,n})^T|\cF_k^n) =
\sigma(X^{i,n}_k, \mu_k^n)\sigma(X^{j,n}_k,\mu_k^n)^T \un_{i=j}$$ for some $\sigma : \bR^d\times\cP_p(\bR^d) \to \bR^{d\times d'}$, with $d'>0$. }

The aim of the paper is to characterize the asymptotic behavior of the empirical measure of the
particles $\mu_k^n$ in the regime where both the time index $k$ and the number of particles $n$
tend to infinity (denoted hereinafter as $(k,n)\to(\infty,\infty)$), without
any constraint on the relative rates of convergence of these two parameters.
To this end, 
we consider for each $i\in[n]$ the random 
continuous process
$\bar X^{i,n} : [0,\infty) \to \bR^d, t\mapsto \bar X_t^{i,n}$ defined as the
piecewise linear interpolation of the particles $(X_k^{i,n})_k$. Specifically,
writing 
\begin{equation}
\label{def:tau} 
\tau_k\eqdef \sum_{j=1}^k\gamma_j 
\end{equation} 
for each $k\in\bN$, we define: 
\begin{equation}
\label{interp} 
\forall t \in [\tau_k, \tau_{k+1}), \quad 
\bar X^{i,n}_t := 
X_k^{i,n} + \frac{t - \tau_k}{\gamma_{k+1}} 
 \left( X_{k+1}^{i,n}-X_k^{i,n} \right) . 
\end{equation} 
The interpolated processes $\bar X^{i,n}$, for $i \in [n]$, are elements of the set
$\cC$ of the $[0,\infty)\to\bR^d$ continuous functions,
equipped with the topology of uniform convergence on compact intervals.
This paper studies the empirical measure of these processes:
\begin{equation}
  \label{eq:m}
  m^n := \frac 1n\sum_{i=1}^n \delta_{\bar X^{i,n}}\,.
\end{equation}
For each $n$ and each $p\in [1,2]$, $m^n$ is a random variable on the space 
$\cP_p(\cC)$ of probability measures on $\cC$ with a finite $p$--moment, 
equipped with the $p$--Wasserstein metric $\sW_p$ (precise definitions of 
these notions provided below). Our aim is to analyze the convergence in 
probability, of the shifted random measures
$$
\Phi_t(m^n) = \frac 1n\sum_{i=1}^n \delta_{\bar X^{i,n}_{t+\,\cdot}}\,,
$$
when both $n$ and $t$ converge to infinity with arbitrary relative rates,
where for every $m\in \cP_p(\cC)$, $\Phi_t(m)\in \cP_p(\cC)$ is defined by
$\Phi_t(m)(f) = \int f(x(t+\,\cdot\,))dm(x)$ for every bounded continuous function $f$ on $\cC$.
Under mild assumptions on the vector field
$b$, and some moment assumptions on the iterates and on the noise sequence
$((\zeta_k^{i,n})_{i\in[n]})_k$, ensuring that the effect of the latter
becomes negligible in our asymptotic regime, we establish the following result, which we explain
hereafter.
\smallskip

\noindent {\bf Main theorem (informal).} The sequence $(\Phi_t(m^n))$ ergodically converges
in probability as $(t,n)\to(\infty,\infty)$
to the set of \emph{recurrent McKean-Vlasov distributions}.
\smallskip

Let us explain what the terms \emph{McKean-Vlasov distribution}, \emph{recurrent}, and \emph{ergodic convergence}
mean in this paper.
Here, a McKean-Vlasov distribution $\rho$ is defined as the law of a $\bR^d$-valued process $(X_t:t\in\bR)$ satisfying
the following condition: for every smooth enough compactly supported function 
$\phi$, the process
$$
\phi(X_t) - \int_0^t  L(\rho_s) (\phi) (X_s)ds
$$
is a martingale, where $\rho_t$ the marginal law of $X_t$,
and where the linear operator $L(\rho_t)$ associates to $\phi$ the function $L(\rho_t)(\phi)$ given by:
{$$
x\mapsto  \ps{b(x,\rho_t),\nabla\phi(x)} + \tr(\sigma(x,\rho_t)^T H_\phi(x) \sigma(x,\rho_t))\,,
$$
 where $H_\phi$ is the Hessian matrix of $\phi$ and $\tr$ denotes the Trace operator. }

A McKean-Vlasov distribution $\rho$ is said recurrent if, for some sequence $(t_k)\to\infty$, $\rho = \lim_{k\to\infty} \Phi_{t_k}(\rho)$.
The $\sW_p$-closure of the set of recurrent McKean-Vlasov distributions will be referred to as the \emph{Birkhoff center},
and denoted by $\BC_p$, following the terminology used for general dynamical systems. 

By \emph{ergodic convergence}, we refer to the fact that the time averaged Wasserstein distance
between the measures $\Phi_t(m^n)$ and the Birkhoff center converges to zero.
Our main theorem can thus be written more precisely:
$$
\frac 1t\int_0^t\sW_p(\Phi_s(m^n),\BC_p)ds \xrightarrow[(t,n)\to(\infty,\infty)]{} 0\,,\ \text{ in probability.}
$$
The Birkhoff center can be characterized in a useful way, provided that one is able to show the existence
of a \emph{Lyapunov function}, namely a function $F$ on $\cP_p(\cC)$ such that, for every McKean-Vlasov distribution $\rho$,
$F(\Phi_t(\rho))$ is non-increasing in the variable $t$. Indeed, in such a situation, the Birkhoff center is included
in the subset $\Lambda$ of McKean-Vlasov distributions which satisfy the property that $t\mapsto F(\Phi_t(\rho))$ is constant
whenever $\rho\in \Lambda$.

Finally, in the case where the McKean-Vlasov dynamics can be cast in the form
of a gradient flow in the space of measures $\cP_p(\bR^d)$, and in case
this gradient flow has a global attractor $A_p$, we show that 
\[
W_p\left( \mu_k^n,A_p \right)
   \xrightarrow[(k,n)\to (\infty,\infty)]{} 0 \quad \text{in probability.} 
\]

{To illustrate our results, we provide an important example of a McKean-Vlasov distribution where these results can be applied: the granular media equation. Additionally, our results can also be applied in several machine learning applications, such as two-layer neural networks or the Stein Variational Gradient Descent (SVGD) algorithm.}

{{\bf Granular media.} Our example is in $\cP_2(\cC)$ and corresponds to 
the scenario where $\sigma(x,\mu) = \sigma I_d$ for some real constant $\sigma\ge 0$, and with a slight abuse of notation the vector field \(b\) takes the form $b(x,\mu) = \int b(x,y)\dr\mu(y) $, with:}
$$
b(x,y) = -\nabla V(x) - \nabla U(x-y)\,,
$$
where the \emph{confinement potential} \(V\) and the \emph{interaction potential} \(U\) denote two real differentiable functions on \(\mathbb{R}^d\), whose gradients satisfy some linear growth condition.
In this case, a Lyapunov function if provided by the \emph{Helmholtz energy}.
As a consequence of our main result, we establish that, when $\sigma>0$, the empirical measures $(\mu_k^n)$ converge ergodically in probability
as $(k,n)\to (\infty,\infty)$ to the set $\cS$ of critical points of the Helmholtz energy, namely:
$$
\frac {\sum_{l=1}^k \gamma_l W_2(\mu_l^n,\cS)}{\sum_{l=1}^k\gamma_l} \xrightarrow[(n,k)\to(\infty,\infty)]{} 0\,,\ \text{ in probability.}
$$
where, this time, $W_2$ 
represents the classical Wasserstein distance, and where
$\cS$ is the set of probability measures $\mu$ on $\bR^d$ which admit a second order moment and a density $d\mu/d\mathscr{L}^d$ w.r.t.
the Lebesgue measure, and such that:
$$
\nabla V(x) + \int \nabla U(x-y)d\mu(y) + \sigma^2 \nabla \log \frac{d\mu}{d\mathscr{L}^d}(x) = 0\,,
$$
for $\mu$-almost every $x$. Our result holds under mild assumptions, and does not require the rather classical
strong convexity or doubling conditions on $U$ and/or $V$.

{\bf Contributions}. Compared to existing works, our contributions are 
threefold.
First, our results hold under mild assumptions on the vector field $b$ aside from continuity and linear growth,
whereas most of the existing works (see below) rely on stronger conditions, such as Lipschitz, doubling or even global boundedness conditions.
Second, we address the case of discrete-time systems with a step size vanishing arbitrarily slowly towards $0$, whereas the continuous time model is more often considered in the literature.
Discrete-time algorithms are important in applications, such as neural networks, transformers, Monte Carlo simulations or numerical solvers.
In particular, stability results are more difficult to establish in this setting.
Finally, our result focuses on a double limit $(k,n)\to(\infty,\infty)$. At the exception of some papers listed below,
the results of the same kind generally consider the case, where
the time window is fixed, while the number of particles grows to infinity, ignoring long time convergence, or assume certain constraints
on the relative rate of convergence of the two variables.
\smallskip

{\bf About the literature}. The first results addressing the limiting behavior of a finite system of particles are provided in the context of the propagation of chaos. These findings are discussed in detail in \cite{Chaintron_2022}. Such results have broad applicability across a variety of particle systems, where the interacting term $b$ can manifest in various forms \cite{Mlard1987APO,Oelschlager1984AMA,sznitman1984nonlinear,10.1214/21-EJP580}.
In our case, if we set aside the transition from continuous to discrete time, such results typically establish the convergence to zero of the expectation of the squared Wasserstein distance between the empirical measure of the particles, over some fixed time interval $[0,T]$, and a McKean-Vlasov distribution with the same initial measure. Under classical assumptions, this convergence occurs at a rate of $1/n$, where $n$ is the number of particles, but with a constant that grows exponentially with $T$. This type of result performs poorly in the long run, making the achievement of the double limit in both time and the number of particles unattainable.

{By imposing additional assumptions, one can derive a bound that is uniform in time, thereby explicitly addressing the double asymptotic regime. However, these uniform-in-time propagation of chaos results are typically established in continuous time. The paper \cite{karimi2024stochastic} bridges the gap between continuous and discrete time in the specific context where uniform-in-time propagation of chaos holds for the continuous-time particle system, allowing for the recovery of our results. They demonstrate that the limiting distribution of the discrete-time particle system coincides with that of the continuous-time particle system. When uniform-in-time propagation of chaos holds, the limiting distributions of the continuous-time particle system converge to the unique stationary distribution of the associated McKean-Vlasov system as time grows. This, in turn, implies the convergence of the discrete-time particle system to the McKean-Vlasov stationary distribution in the doubly asymptotic regime. However, it should be noted that when applying the results of \cite{karimi2024stochastic}, we lose the convergence rate provided by uniform-in-time propagation of chaos, and the resulting result is no better than ours in the restrictive case where it is applicable.

Our contribution lies in the fact that our assumptions are weaker than those requiring uniform-in-time propagation of chaos, which are generally too strong for practical applications. Specifically, the first paper to address uniform-in-time propagation of chaos in the granular media setting is \cite{malrieu2001logarithmic}, which requires the strong convexity of the confinement potential and the convexity of the interaction potential. Later, \cite{cattiaux2008probabilistic} relaxed the strong convexity assumption on the confinement potential. \cite{durmus2020elementary} proposed a uniform-in-time propagation of chaos result when the confinement potential is strongly convex outside a ball, and the interaction potential has a sufficiently small Lipschitz constant. More recently,
\cite{monmarche2024time,chen2024uniform,lacker2023sharp} provide sharp uniform-in-time propagation of chaos results under a Log-Sobolev inequality on the vector field \(b\) and a noise with variance large enough.

As highlighted in \cite{del2019uniform}, achieving uniform propagation of chaos over time is only possible when a unique McKean-Vlasov stationary distribution exists. A condition that \cite{herrmann2010non} has demonstrated is not always met. In this regard, our assumptions are weaker, allowing for the existence of multiple stationary distributions. It is noteworthy that the study of McKean-Vlasov stationary distributions in cases where the uniqueness of such distributions does not hold remains an open area of research. For instance, \cite{cormier2023stability} explores the stability of stationary distributions. Additionally, \cite{budhiraja2015limits} explores a general class of non-linear Markov processes in finite-dimensional space and proposes a method to obtain Lyapunov functions for these processes.


Among papers that address the long-run convergence of discrete-time particle systems, \cite{malrieu2003convergence} employs an implicit Euler scheme for the granular media case, assuming a zero potential function and strongly convex interaction. The work in \cite{benko2024convergenceratesparticleapproximation} studies a Jordan–Kinderlehrer–Otto (JKO) scheme for granular media, assuming a strongly convex confinement potential. The contribution of \cite{veretennikov2006ergodic} is the closest to the present work, considering an equation similar to Eq.~\eqref{eq:main}, but assumes that \( b \) is globally bounded and only addresses the convergence of the expectation of the empirical measure, not convergence in probability. } Lastly, \cite{benaim2000ergodic} is closely related but not specific to McKean-Vlasov processes, as it does not consider particle systems or double limits. However, it establishes ergodic convergence of the empirical measure of a weak asymptotic pseudotrajectory to the Birkhoff center of a flow on a metric space, similar in spirit to our approach.

Finally, let us review some applications of our model.
Particle systems have historically been motivated by statistical physics. However, in recent decades, they have found utility in various models including neural networks, Markov Chain Monte Carlo theory, mathematical biology, and mean fields game, among others. 
A well-known model in statistical physics is granular media \cite{villani2006mathematics}. This model has been extensively studied due to its property of being a gradient system, and the uniform propagation of chaos over time works well within this model. It can also be described by a gradient flow \cite{ambrosio2005gradient}. 
In Markov Chain Monte Carlo theory, the Stein Variational Gradient Descent estimates a target distribution using a particle system \cite{liu2016stein, salim2022convergence}, and the convergence of this algorithm remains an open question.
Wide Neural Networks can also be represented by particle systems. A convergence result to the minimizers of the risk is attainable when both time and the number of particles tend to infinity \cite{chizat2018global}. Here, the authors establish convergence to gradient descent in continuous time and in the double asymptotic regime.
The paper \cite{mei2018mean} establishes the convergence of noisy stochastic gradient descent when the number of iterations depends on the number of particles. See also \cite{rotskoff2022trainability, sirignano2020mean,hu2021mean,chizatmean,pmlr-v151-nitanda22a} for related works.

\section{The setting} 
\label{sec:setting} 

We begin by introducing some notations and by recalling some definitions. 

\subsection{Notations} 
\label{subsec-not} 

\subsubsection{General notations} 
We denote by $\langle \cdot, \cdot \rangle$ and $\| \cdot \|$
the inner product and the corresponding norm in a Euclidean
space. We use the same notation in an infinite dimensional space, to denote the standard dual pairing and the operator norm.

For $k\in\bN\cup\{\infty\}$, we denote by $C^k(\mathbb{R}^d,\mathbb{R}^q)$ the
set of functions which are continuously differentiable up to the order $k$.  We
denote by $C_c(\bR^d,\bR)$ the set of $\bR^d\to\bR$ continuous functions with
compact support. Given $p\in \bN^*\cup \{\infty\}$, we denote as
$C_c^p(\bR^d,\bR)$ the set of compactly supported $\bR^d\to\bR$ functions which
are continuously differentiable up to the order $p$. 

We denote by $\cC$ the set of the $[0,\infty)\to\bR^d$ continuous functions.
It is well-known that the space $\cC$ endowed with the topology of the 
uniform convergence on the compact intervals of $[0,\infty)$ is a Polish space.

We denote by $\mathrm{conv}(A)$ the convex hull of a set $A$.

\subsubsection{Random variables}

The notation $f_\# \mu$ stands for the pushforward of the measure
$\mu$ by the map $f$, that is, $f_\# \mu= \mu\circ f^{-1}$.

For $t\geq 0$, we define the projections $\pi_t$ and $\pi_{[0,t]}$ as 
$\pi_t : (\bR^d)^{[0,\infty)} \to \bR^d, x \mapsto x_t$ and 
$\pi_{[0,t]} : (\bR^d)^{[0,\infty)} \to (\bR^d)^{[0,t]}, x \mapsto 
(x_u\, : \, u\in[0,t])$. 

Let $p\ge1$. For $\rho\in\mathcal{P}_p(\mathcal{C})$, we denote $$\rho_t:=(\pi_t)_\#\rho\,.$$

Let $(\Omega,\cF,\bP)$ be a probability space.  We say that a collection $A$ of
random variables on $\Omega\to E$ is \emph{tight} in $E$, if the family
$\{X_\#\bP:X\in A\}$ is weak$\star$-relatively compact in $\cP(E)$ \emph{i.e.},
has a weak$\star$ compact closure in $\cP(E)$.

We say that a $n$--uple of random variables $(X_1,\dots, X_n)$ is
\emph{exchangeable}, if its distribution is invariant by any permutation on
$[n]$.

Let ${\mathbb T}$ represent either $\bN$ or $[0,+\infty)$.  Let
$(U_t^n:t\in\mathbb{T},n\in \bN)$ be a collection of random variables on a
metric space $(E,\mathsf d)$.  We say that $(U_t^n)$ converges in probability
to $U$ as $(t,n)\to (\infty,\infty)$ if, for every $\epsilon>0$, the net
$(\bP(\mathsf d(U_t^n,U)>\epsilon):t\in\mathbb{T},n\in \bN)$ converges to zero
as $t$ and $n$ both converge to $\infty$.  We denote this by $U_t^n
\xrightarrow[(t,n)\to (\infty,\infty)]{\bP} U$.
{When $(U_t^n)$ is deterministic, we write $U_t^n
\xrightarrow[(t,n)\to (\infty,\infty)]{} U$.}
Moreover, assuming that the collection of random variables $(U_t^n:t\in\mathbb{T},n\in \bN)$ are real valued, we say that the latter collection is \textit{uniformly integrable} if:
\[
\lim_{a\to\infty} \sup_{t\in\mathbb T, n\in\bN^*} \esp{\abs{U_t^n}\indicatrice_{\abs{U_t^n} >a}} =0\,.
\]
{We define $\underset{(t,n)\to (\infty,\infty)}{\lim\sup} \,U_t^n := \underset{t\in \mathbb T,n\in\bN}{\inf}\underset{s\ge t, k\ge n}{\sup} U_s^k$.}
Finally, for any $d\in\mathbb{N}^*$,  $\mathscr{L}^d$ stands for the Lebesgue measure on $\mathbb{R}^d$.

\subsection{Spaces of probability measures}\label{sec:space}

Let $(E,\mathsf{d})$ denote a Polish space.  If $\cA\subset E$ is a subset, we
define $\mathsf{d}(x,\cA) \eqdef \inf\{\mathsf{d}(x,y):y\in \cA\}$, with
$\inf\emptyset=\infty$.  We say that a  net $(\mu_\alpha)$ converges to $\cA$ if
$\mathsf{d}(x_\alpha,\cA)\to_\alpha 0$.

We denote by $\cP(E)$ the set of probability measures on
the Borel $\sigma$-algebra $\cB(E)$.
We equip $\cP(E)$ with the weak$\star$ topology. Note that $\cP(E)$ is a Polish space.
We denote by $d_{L}$ the Levy-Prokhorov distance on $\cP(E)$, which is compatible with
the weak$\star$ topology.
 We define the \textit{intensity} of a random variable $\rho: \Omega \to \cP(E)$,
as the measure $\bI(\rho)\in \cP(E)$ that satisfies 
\[
\forall A \in \cF, \quad \bI(\rho)(A) := \bE\p{\rho(A)}. 
\]
\begin{lemma}[\cite{Mlard1987APO}]
\label{lem:meleardSimple}
A sequence $(\rho^n)$ of random variables on $\mathcal{P}(E)$
is tight if and only if the sequence $(\bI (\rho^n))$ is 
weak$\star$-relatively compact. 
\end{lemma}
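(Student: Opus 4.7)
The plan is to use Prokhorov's theorem in both directions, exploiting the fact that $E$ and $\cP(E)$ are both Polish so that relative compactness of a family of probability measures is equivalent to its tightness. Recall that tightness of $(\rho_n)$ viewed as random variables in $\cP(E)$ means tightness of their laws in $\cP(\cP(E))$, that is: for every $\varepsilon>0$ there exists a compact $\cK\subset \cP(E)$ such that $\bP(\rho_n\in\cK)\ge 1-\varepsilon$ for all $n$.

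For the direct implication, suppose $(\rho_n)$ is tight. Fix $\varepsilon>0$ and pick a compact $\cK\subset \cP(E)$ with $\bP(\rho_n\notin\cK)\le \varepsilon/2$ for every $n$. Applying Prokhorov's theorem to $\cK$ itself, which is tight as a subset of $\cP(E)$, I obtain a compact $K\subset E$ with $\mu(K^c)\le \varepsilon/2$ for all $\mu\in\cK$. Then, using the definition of the intensity and splitting according to whether $\rho_n\in\cK$ or not,
\[
\bI(\rho_n)(K^c) = \bE[\rho_n(K^c)\indicatrice_{\rho_n\in\cK}] + \bE[\rho_n(K^c)\indicatrice_{\rho_n\notin\cK}] \le \varepsilon/2 + \varepsilon/2,
\]
which gives tightness of $(\bI(\rho_n))$ in $\cP(E)$, hence weak$\star$-relative compactness by Prokhorov.

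For the converse, suppose $(\bI(\rho_n))$ is tight and fix $\varepsilon>0$. For each integer $k\ge 1$, tightness produces a compact $K_k\subset E$ such that $\bI(\rho_n)(K_k^c)\le \varepsilon/(k\,2^k)$ uniformly in $n$. Define
\[
M \eqdef \{\mu\in\cP(E)\,:\,\mu(K_k^c)\le 1/k \text{ for every } k\ge 1\}.
\]
By the Portmanteau theorem, $\mu\mapsto \mu(K_k^c)$ is weak$\star$ upper-semicontinuous (since $K_k$ is closed), so each condition defines a weak$\star$-closed set, and $M$ is weak$\star$-closed. Moreover $M$ is tight by construction, hence weak$\star$-compact by Prokhorov. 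Applying Markov's inequality to each scale and a union bound,
\[
\bP(\rho_n\notin M) \le \sum_{k\ge 1}\bP(\rho_n(K_k^c)>1/k) \le \sum_{k\ge 1} k\,\bI(\rho_n)(K_k^c) \le \sum_{k\ge 1}\varepsilon/2^k = \varepsilon,
\]
which establishes tightness of $(\rho_n)$.

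The main obstacle, if any, is just making sure the set $M$ in the converse is genuinely compact and that the quantitative choice of scales $\varepsilon/(k\,2^k)$ is well calibrated so that the union bound closes. Everything else is a clean double application of Prokhorov's theorem together with an elementary Markov-type estimate, and the linearity $\bE[\rho_n(A)]=\bI(\rho_n)(A)$ does the rest.
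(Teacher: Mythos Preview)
Your proof is correct. Note that the paper does not actually prove this lemma itself—it is cited from \cite{Mlard1987APO}—so there is no in-paper argument to compare against directly; that said, your converse direction (choosing compacts $K_k$ with calibrated mass defects, then Markov's inequality plus a union bound) is exactly the technique the paper employs in Appendix~\ref{sec:meleard} when establishing the extension Lemma~\ref{lem:meleard}. One minor slip: for $K_k$ closed, Portmanteau gives that $\mu\mapsto\mu(K_k^c)$ is \emph{lower} semicontinuous (open set, $\liminf$ inequality), not upper; fortunately this is precisely what you need, since sublevel sets of l.s.c.\ functions are closed, so your conclusion that $M$ is weak$\star$-closed is unaffected.
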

Let $p\ge 1$. If $E$ is a Banach space, we define
$$
\cP_p(E)\eqdef \{\mu\in \cP(E)\,:\, \int\|x\|^p\dr\mu(x)<\infty\}\,.
$$
We define the Wasserstein distance of order $p$ on $\cP_p(E)$ by
\begin{equation}\label{eq:WpE}
{ W}_p(\mu,\nu) \eqdef \left(\inf_{\varsigma\in \Pi(\mu,\nu)} 
  \int \|x-y\|^p d\varsigma(x,y)\right)^{1/p}\,,
\end{equation}
where $\Pi(\mu,\nu)$ is the set of measures $\varsigma\in\cP(E\times E)$, such 
that $\varsigma(\,\cdot\,\times E)=\mu$ and $\varsigma(E\times\,\cdot\,)=\nu$.
We denote by $\Pi_p^0(\mu,\nu)$ the set of optimal transport plans \emph{i.e.},
the set of measures $\varsigma\in \Pi(\mu,\nu)$ achieving the infemum in Eq.~(\ref{eq:WpE}).
The set $\cP_p(E)$ is endowed with the distance ${ W}_p$.
Define: 
$$
\cP_p(\cC) {:}= \{\rho\in \cP(\cC)\,:\, \forall T>0,\, \int \sup_{t\in[0,T]} \|x_t\|^p \dr \rho(x)<\infty\}\,.
$$
For every $\rho,\rho'\in\cP_p(\cC)$, we define:
$$
{\mathsf W}_p(\rho,\rho') {:}= \sum_{n=1}^\infty 2^{-n} (1\wedge { W}_p( (\pi_{[0,n]})_\#\rho,(\pi_{[0,n]})_\#\rho'))\,.
$$
We equip $\cP_p(\cC)$ with the distance ${\mathsf W}_p$. 
We say that a subset $\cA\subset \cP_p(\cC)$ has 
\textit{uniformly integrable $p$-moments} if the following condition holds:
\begin{equation}
  \forall T>0, \quad \lim_{a\to\infty}\sup_{\rho\in \cA} \int \indicatrice_{\underset{{t\in[0,T]} }{\sup} \|x_t\| >a } \p{\sup_{t\in[0,T]} \|x_t\|^p}d\rho(x)=0\,. \tag{$p$-UI}\label{eq:condUI}
\end{equation} 
In the same way,  a sequence $(\rho^n)$ has {uniformly integrable $p$-moments} if the condition (\ref{eq:condUI}) holds for the sequence $(\rho^n)$ in place of $\cA$.
Following the same lines as  \cite[Th.~6.18]{villani2008optimal} and~\cite[Prop. 7.1.5]{ambrosio2005gradient}, we obtain the following lemma.
The proof is provided in~\ref{sec:proof-space}.
\begin{prop}\label{prop:space}
  \begin{enumerate}[i)]
  \item The space $\cP_p(\cC)$ is Polish.
  \item  A subset $\cA\subset \cP_p(\cC)$ is relatively compact if and only if, it is weak$\star$-relatively compact in $\cP(\cC)$, and if
 $\cA$ has uniformly integrable $p$-moments.
\end{enumerate}
\end{prop}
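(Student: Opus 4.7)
The plan is to reduce both assertions to the classical Wasserstein-space results on the finite-horizon spaces $\cP_p(C_T)$, where $C_T := C([0,T],\bR^d)$ is equipped with uniform convergence. By the definition of $\sW_p$ as a weighted series of truncated distances, one checks directly that $\sW_p(\rho_k,\rho)\to 0$ holds if and only if $W_p((\pi_{[0,n]})_\#\rho_k,(\pi_{[0,n]})_\#\rho)\to 0$ for every integer $n\geq 1$. Equivalently, the map
\[
\Psi : \cP_p(\cC) \to \prod_{n\geq 1} \cP_p(C_n), \qquad \rho\mapsto \bigl((\pi_{[0,n]})_\#\rho\bigr)_{n\geq 1},
\]
is a homeomorphism onto its image, which is precisely the set of consistent families $(\nu_n)$, i.e.\ those satisfying $(r_{n,m})_\#\nu_n = \nu_m$ for $m\leq n$, where $r_{n,m} : C_n\to C_m$ denotes restriction. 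This identification does most of the work.

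For part (i), each factor $\cP_p(C_n)$ is Polish by \cite[Thm.~6.18]{villani2008optimal}, hence so is the countable product. The image of $\Psi$ is closed because each restriction $r_{n,m}$ is $W_p$-continuous on pushforwards. Conversely, every consistent family $(\nu_n)$ extends, via the Kolmogorov--Bochner extension theorem for Polish inverse systems, to a unique $\rho\in\cP(\cC)$ with $(\pi_{[0,n]})_\#\rho = \nu_n$; the identity $\int\sup_{t\in[0,n]}\|x_t\|^p\,d\rho(x) = \int\sup_{t\in[0,n]}\|y_t\|^p\,d\nu_n(y)<\infty$ places $\rho$ in $\cP_p(\cC)$. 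Thus $\cP_p(\cC)$ is homeomorphic to a closed subset of a Polish space, so is itself Polish.

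For part (ii), $\Psi$ combined with the standard diagonal-extraction argument in a countable product of metric spaces shows that relative compactness of $\cA$ in $(\cP_p(\cC),\sW_p)$ is equivalent to simultaneous relative compactness of $\cA_n := \{(\pi_{[0,n]})_\#\rho : \rho\in\cA\}$ in $(\cP_p(C_n), W_p)$, for every $n$. By \cite[Prop.~7.1.5]{ambrosio2005gradient}, the latter condition on a fixed $n$ is the conjunction of weak$\star$-tightness of $\cA_n$ in $\cP(C_n)$ together with uniform integrability of $\sup_{t\in[0,n]}\|x_t\|^p$ under the measures of $\cA_n$. Taking the conjunction over all $n$, the uniform-integrability part is exactly \eqref{eq:condUI}, and simultaneous tightness of the $\cA_n$ is equivalent to weak$\star$-relative compactness of $\cA$ in $\cP(\cC)$. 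One direction follows immediately from continuity of each projection $\pi_{[0,n]}$. The converse, which I expect to be the most delicate step, uses Arzel\`a--Ascoli to build, from compacts $K_n\subset C_n$ carrying $(\pi_{[0,n]})_\#\rho$-mass at least $1-\epsilon 2^{-n}$ uniformly in $\rho\in\cA$, the set $K := \{x\in\cC : \pi_{[0,n]}(x)\in K_n\text{ for every }n\}$, which is compact in $\cC$ and satisfies $\rho(K)\geq 1-\epsilon$ for all $\rho\in\cA$. This diagonal transfer of tightness is the only nontrivial point and is the principal obstacle in the argument.
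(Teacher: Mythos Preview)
Your proof is correct and rests on the same analytic ingredients as the paper's, but the packaging is different enough to be worth a remark. The paper argues completeness directly: given a Cauchy sequence $(\rho_n)$ in $\cP_p(\cC)$, it observes that $((\pi_{[0,k]})_\#\rho_n)_n$ is Cauchy in each complete space $\cP_p(C_k)$, pulls back compacts $K_k\subset C_k$ carrying mass $>1-\varepsilon 2^{-k}$, forms $\cK=\bigcap_k\pi_{[0,k]}^{-1}(K_k)$ (relatively compact in $\cC$), extracts a weak$\star$ limit $\rho$, and then checks $\sW_p(\rho_n,\rho)\to 0$ horizon by horizon via \cite[Th.~6.18]{villani2008optimal}; separability is obtained by lifting a dense sequence from $\cC$. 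Part~(ii) is not written out in the paper beyond the pointer to \cite[Prop.~7.1.5]{ambrosio2005gradient}. You instead encode everything in the single structural observation that $\Psi:\rho\mapsto((\pi_{[0,n]})_\#\rho)_n$ is a homeomorphism onto the closed set of consistent families in the Polish product $\prod_n\cP_p(C_n)$, using a projective-limit (Kolmogorov--Bochner) argument to exhibit the preimage of any consistent family. This is tidier and treats~(i) and~(ii) uniformly; the intersection-of-compacts step you single out as the ``principal obstacle'' is exactly the construction the paper performs inside its completeness proof, so the underlying content coincides.
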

Finally, we will also consider $\cP_p(\cC)$-valued sequences of random variables.
Therefore, the following extension of Lem.~\ref{lem:meleardSimple}, will be useful. It is established in~\ref{sec:meleard}.
\begin{lemma}
\label{lem:meleard}
Let $(\rho^n)$ be a sequence of random variables valued in 
 $\cP_p(\cC)$. Assume that $(\bI(\rho^n))$ is relatively compact in $\cP_p(\cC)$.
Then, $(\rho^n)$ is tight in $\cP_p(\cC)$.
\end{lemma}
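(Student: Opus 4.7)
The goal is to prove, for arbitrary $\epsilon>0$, the existence of a compact set $K \subset \cP_p(\cC)$ with $\bP(\rho_n \in K) \geq 1-\epsilon$ for all $n$, which by Prokhorov's theorem (applicable since $\cP_p(\cC)$ is Polish by Proposition~\ref{prop:space}) is equivalent to tightness. The natural strategy is to construct $K$ as the $\mathsf{W}_p$-closure of a set $\cA$ that combines (i) a weak$\star$-compact kernel in $\cP(\cC)$ controlling the ``shape'' of measures and (ii) uniform control of $p$-moment tails, matching the two criteria of Proposition~\ref{prop:space}(ii).

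For the first ingredient, since $(\bI(\rho_n))$ is relatively compact in $\cP_p(\cC)$ and the inclusion $\cP_p(\cC) \hookrightarrow \cP(\cC)$ is continuous, $(\bI(\rho_n))$ is in particular weak$\star$-relatively compact in $\cP(\cC)$. Applying Lemma~\ref{lem:meleardSimple} to the $\cP(\cC)$-valued sequence $(\rho_n)$ yields a weak$\star$-compact set $K_1 \subset \cP(\cC)$ with $\bP(\rho_n \in K_1) \geq 1 - \epsilon/2$.

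For the second ingredient, write $g_T(x) := \sup_{t \in [0,T]} \|x_t\|$. By Proposition~\ref{prop:space}(ii), relative compactness of $(\bI(\rho_n))$ in $\cP_p(\cC)$ implies that this family has uniformly integrable $p$-moments, so for each pair $(T,k) \in (\bN^*)^2$ we can pick $a_{T,k}>0$ with
\[
\sup_n \bE\!\left[\int \indicatrice_{g_T(x) > a_{T,k}}\, g_T(x)^p\, d\rho_n(x)\right] \;=\; \sup_n \int \indicatrice_{g_T > a_{T,k}}\, g_T^p \, d\bI(\rho_n) \;\leq\; \frac{\epsilon}{k\, 2^{T+k+1}},
\]
where the first equality uses the definition of the intensity together with Fubini--Tonelli. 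Markov's inequality then gives $\bP(\int \indicatrice_{g_T > a_{T,k}} g_T^p\, d\rho_n > 1/k) \leq \epsilon\, 2^{-T-k-1}$. Setting
\[
\cA := K_1 \cap \bigcap_{T,k \in \bN^*}\!\left\{\rho \in \cP(\cC) \,:\, \int \indicatrice_{g_T > a_{T,k}}\, g_T^p\, d\rho \,\leq\, \tfrac{1}{k}\right\},
\]
a union bound yields $\bP(\rho_n \notin \cA) \leq \epsilon/2 + \sum_{T,k \geq 1} \epsilon\, 2^{-T-k-1} = \epsilon$.

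It remains to check that $\cA$ is relatively compact in $\cP_p(\cC)$. Weak$\star$-relative compactness in $\cP(\cC)$ is inherited from $K_1$. For the uniform integrability of $p$-moments, fix $T > 0$ and $\delta > 0$: take $T' = \lceil T \rceil \in \bN^*$ and $k$ with $1/k < \delta$; then for any $a \geq a_{T',k}$ and any $\rho \in \cA$, using $g_T \leq g_{T'}$ pointwise,
\[
\int \indicatrice_{g_T > a}\, g_T^p \, d\rho \;\leq\; \int \indicatrice_{g_{T'} > a_{T',k}}\, g_{T'}^p\, d\rho \;\leq\; \tfrac{1}{k} \;<\; \delta,
\]
which gives the required condition~\eqref{eq:condUI}. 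Proposition~\ref{prop:space}(ii) then supplies a compact $K := \overline{\cA}^{\,\mathsf{W}_p} \subset \cP_p(\cC)$ with $\bP(\rho_n \in K) \geq 1 - \epsilon$, completing the proof. The only mildly delicate point is the summability bookkeeping in step three, requiring the factor $k\,2^{T+k+1}$ so that Markov's bound combined with the double sum over $(T,k)$ still fits within the budget $\epsilon/2$; everything else reduces to applying the two results already cited.
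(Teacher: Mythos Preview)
Your proof is correct and follows essentially the same two-step strategy as the paper: control weak$\star$-tightness in $\cP(\cC)$ via the intensities, then control the $p$-moment tails via Markov and a double-indexed union bound, and conclude with Proposition~\ref{prop:space}(ii). The only (minor) difference is that for the first ingredient you invoke Lemma~\ref{lem:meleardSimple} as a black box to obtain the compact $K_1\subset\cP(\cC)$, whereas the paper re-derives this by hand from compacts $K_k\subset\cC$; your route is slightly cleaner.
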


\subsection{Spaces of McKean-Vlasov measures}
\label{sec:mkv}

{Let $d'\in\bN^*$. Consider a matrix-valued function $\sigma: \bR^d\times \cP_p(\bR^d) \to \bR^{d\times d'}$ and a vector field $b:\bR^d\times\cP_p(\bR^d)\to\bR^d$ satisfying the following assumptions:
\begin{assumption}
  \label{hyp:b}
  The vector field $b:\bR^d\times\cP_p(\bR^d)\to\bR^d$, and $\sigma: \bR^d\times \cP_p(\bR^d) \to \bR^{d\times d'} $ are continuous. Moreover, there exists $C>0$ such that for all $(x,\mu)\in \bR^d\times\cP_p(\bR^d)$,
  $$
  \|b(x,\mu)\|\leq C(1+\|x\|+\int \|y\|\dr \mu(y)),\,
  $$
  and $\norm{\sigma(x,\mu)}\le C$.
\end{assumption}
We define $L(\mu)$ which, to every test
function $\phi\in C_c^2(\bR^d,\bR)$, associates the function $L(\mu)(\phi)$
given by 
\begin{equation}\label{eq:L} 
{L(\mu)(\phi)(x)= \ps{b(x,\mu),\nabla\phi(x)} + \tr\p{\sigma(x,\mu)^T H_\phi(x) \sigma(x,\mu)}\,,
}\end{equation}
{where $H_\phi$ is the hessian matrix of $\phi$. } } 
Let $(X_t:t\in [0,\infty))$ be the canonical process on $\cC$.
Denote by $(\cF_t^X)_{t\geq 0}$ the natural filtration (\emph{i.e.}, the filtration generated by $\{X_s:0\leq s\leq t\}$).
{\begin{definition}
  \label{def:V}
  Let $p\geq 1$. We say that a measure $\rho\in \cP_p(\cC)$ belongs to the class $\sV_p$ if,  for every $\phi\in C_c^2(\bR^d,\bR)$,
$$
\phi(X_t) -\int_0^t L(\rho_s)(\phi)(X_s)ds
$$
is a $(\cF_t^X)_{t\geq 0}$-martingale on the probability space $(\cC,\cB(\cC),\rho)$.
\end{definition}}
{The elements of $\sV_p$ will be referred to as McKean-Vlasov distributions of order $p$. } 
In the sequel, it will be convenient to work with the following equivalent characterization.
The martingale property implies that every measure $\rho\in \sV_p$ satisfies $G(\rho) = 0$, for every function $G:\cP_p(\cC)\to \bR$ of the form:
\begin{equation}
  \label{eq:G}
  G(\rho) := \int \left(\phi(x_t) -\phi(x_s) 
    -\int_s^t L(\rho_u)(\phi)(x_u)\mathrm{d} u\right) 
  \prod_{j=1}^r h_j(x_{v_j}) d\rho(x)\,,
\end{equation}
where $r\in \bN$, $\phi\in C_c^2(\bR^d,\bR)$, $h_1,\dots,h_r\in C_c(\bR^d,\bR)^r$,  
$0\leq v_1\leq \cdots \leq v_r \leq s \leq t$, are arbitrary.
We denote by $\cG_p$ the set of such mappings $G$.
Assumption~\ref{hyp:b} ensures that these mappings are well defined.
By Def.~\ref{def:V}, every $\rho\in \sV_p$ is a root of all $G\in \cG_p$.
As a matter of fact, a measure $\rho\in\cP_p(\cC)$ belongs to the set $\sV_p$, if and only if $G(\rho)=0$
for every $G$ of the form~(\ref{eq:G}). In other words, Def.~\ref{def:V} is equivalent to the following identity:
\begin{equation}\label{eq:Vp}
  \sV_p = \bigcap_{G\in\cG_p}G^{-1}(\{0\})\,.
\end{equation}
The following lemma is proved in~\ref{sec:continuity}. 
\begin{lemma}
\label{lem:continuityG}
Let Assumption~\ref{hyp:b} hold true. Every
$G \in \cG_p$ is a continuous function on $\cP_p(\cC)\to\bR$.
\end{lemma}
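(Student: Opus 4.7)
Fix a sequence $(\rho_n)$ converging to $\rho$ in $(\cP_p(\cC),\sW_p)$, and recall the form~\eqref{eq:G} of $G$. The plan is to introduce
$$
g(\mu,x) \eqdef \Big(\phi(x_t)-\phi(x_s) - \int_s^t L(\mu_u)(\phi)(x_u)\,du\Big) \prod_{j=1}^r h_j(x_{v_j}),
$$
and split $G(\rho_n)-G(\rho)=(I_n)+(II_n)$ where $(I_n)\eqdef \int (g(\rho_n,x)-g(\rho,x))\,d\rho_n(x)$ and $(II_n)\eqdef \int g(\rho,x)\,d(\rho_n-\rho)(x)$.

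For $(II_n)$, I would first show that $g(\rho,\cdot)$ is a bounded continuous function on $\cC$. Boundedness comes from $K\eqdef\supp\phi$ being compact (so $\nabla\phi$ and $H_\phi$ vanish outside $K$), from the uniform boundedness of $\sigma$, and from $\sup_{u\in[s,t]}\int\norm{y}d\rho_u(y)<\infty$ since $\rho\in\cP_p(\cC)\subset\cP_1(\cC)$. Continuity in $x\in\cC$ reduces to the continuity of $u\mapsto L(\rho_u)(\phi)(x_u)$ for each $x$, combined with dominated convergence in $u\in[s,t]$; here $b(\cdot,\rho_u)$ and $\sigma(\cdot,\rho_u)$ are themselves continuous on $\bR^d$ by Assumption~\ref{hyp:b} and dominated convergence. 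Since $\sW_p$-convergence on $\cP_p(\cC)$ implies weak$\star$-convergence on $\cP(\cC)$, it follows that $(II_n)\to 0$.

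For $(I_n)$, Fubini gives
$$
\abs{(I_n)} \le \Big(\prod_{j=1}^r \norm{h_j}_\infty\Big) \int_s^t\int_K \abs{L(\rho_{n,u})(\phi)(y)-L(\rho_u)(\phi)(y)}\,d\rho_{n,u}(y)\,du.
$$
The starting observation is that $\sW_p(\rho_n,\rho)\to 0$ yields $W_p(\rho_{n,u},\rho_u)\to 0$ for every $u\ge 0$, obtained by pushing forward through the $1$-Lipschitz projection $\pi_u$. The crux is then to establish, for each $u$,
$$
\sup_{y\in K}\abs{L(\rho_{n,u})(\phi)(y)-L(\rho_u)(\phi)(y)}\xrightarrow[n\to\infty]{}0.
$$
Pointwise convergence of $b(y,\rho_{n,u})\to b(y,\rho_u)$ and $\sigma(y,\rho_{n,u})\to\sigma(y,\rho_u)$ follows from $W_p$-convergence of the marginals together with the growth and boundedness bounds of Assumption~\ref{hyp:b}. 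To upgrade pointwise to uniform convergence on the compact $K$, I would invoke Arzelà--Ascoli via an equicontinuity argument: for $y_1,y_2\in K$ one bounds $\int\norm{b(y_1,z)-b(y_2,z)}d\rho_{n,u}(z)$ by splitting the integral at $\norm{z}\le R$ (using uniform continuity of $b$ on compacts of $\bR^d\times\bR^d$) and controlling the tail via the uniformly integrable $p$-th moments of $(\rho_{n,u})_n$, which are provided by Proposition~\ref{prop:space} applied to the $W_p$-convergent sequence $((\pi_{[0,N]})_\#\rho_n)_n$ for any $N\ge t$. The corresponding statement for $\sigma$ is easier since $\sigma$ is globally bounded.

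Finally, the inner integrand is dominated uniformly in $n$ and $u\in[s,t]$ by a constant depending only on $\phi$, on the bound for $\sigma$, and on $\sup_n\sup_{u\in[s,t]}\int\norm{z}d\rho_{n,u}(z)<\infty$, the latter being again ensured by the uniform $p$-integrability. Dominated convergence in $u$ then gives $(I_n)\to 0$, whence $G(\rho_n)\to G(\rho)$. I expect the main obstacle to be precisely this equicontinuity step: since $b$ is only assumed continuous with linear growth, no single uniform-continuity argument applies directly, and one must combine uniform continuity of $b$ on compacts with the uniform control of the tails of $(\rho_{n,u})_n$ delivered by $W_p$-convergence.
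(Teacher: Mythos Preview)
Your decomposition $G(\rho_n)-G(\rho)=(I_n)+(II_n)$ is correct and can be completed as you outline; the equicontinuity argument for $(I_n)$ works because on $K\times\{\|z\|\le R\}$ the function $b$ is uniformly continuous, and the tail $\int_{\|z\|>R}(1+\|z\|)\,d\rho_{n,u}(z)$ is controlled uniformly in $n$ by the $p$-uniform integrability coming with $\sW_p$-convergence.

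The paper, however, avoids your two-term split entirely by exploiting the affine/bilinear dependence of $L(\mu)(\phi)$ on $\mu$. Since $b(x,\mu)=\int b(x,y)\,d\mu(y)$ and $\sigma(x,\mu)=\int\sigma(x,y)\,d\mu(y)$, one has
\[
G(\rho)=\int g(x,y,z)\,d(\rho\otimes\rho\otimes\rho)(x,y,z),
\]
where $g$ is a \emph{fixed} continuous function on $\cC^3$ satisfying the growth bound $|g(x,y,z)|\le C(1+\sup_{u\in[0,t]}\|y_u\|)$ (the $x$-dependence disappears because $\nabla\phi$ and $H_\phi$ vanish off the compact $K$). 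Then $\rho_n\to\rho$ in $\cP_p(\cC)$ gives $\rho_n^{\otimes 3}\to\rho^{\otimes 3}$ weakly together with uniform integrability of $g$ under $\rho_n^{\otimes 3}$, and the conclusion follows in one stroke. This product-measure linearization is what buys the paper its brevity: it trades your equicontinuity/Arzel\`a--Ascoli step, which you rightly flag as the main obstacle, for a single uniform-integrability estimate. Your route is more hands-on and would generalize to nonlinearities in $\mu$ that are not polynomial, whereas the paper's trick relies on the specific affine/quadratic structure of $L(\mu)$.
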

The following result is a consequence of Lem.~\ref{lem:continuityG} and Prop.~\ref{prop:space}.
\begin{prop}\label{prop:Vp}
    Under Assumption~\ref{hyp:b}, $\sV_p$ is a closed subset of $\cP_p(\cC)$. Moreover, equipped with the trace topology of $\cP_p(\cC)$, $\sV_p$ is a Polish space. 
\end{prop}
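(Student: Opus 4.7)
The plan is to deduce the proposition directly from the characterization \eqref{eq:Vp} and Lemma~\ref{lem:continuityG}, using the elementary fact that a closed subspace of a Polish space is Polish in the trace topology.

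For the closedness, I would start from the identity
\[
\sV_p = \bigcap_{G\in\cG_p}G^{-1}(\{0\}).
\]
By Lemma~\ref{lem:continuityG}, each $G\in\cG_p$ is continuous from $\cP_p(\cC)$ to $\bR$, so each preimage $G^{-1}(\{0\})$ is closed in $\cP_p(\cC)$. An arbitrary intersection of closed sets is closed, hence $\sV_p$ is closed.

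For the Polish property, I would invoke Proposition~\ref{prop:space}(i), which ensures that $\cP_p(\cC)$ is itself Polish. It is a classical fact that every closed subset of a Polish space, equipped with the trace topology, is Polish: separability is inherited trivially, and completeness can be obtained because a closed subset of a completely metrizable space is completely metrizable with the restricted metric. Applying this to the closed set $\sV_p \subset \cP_p(\cC)$ yields the conclusion.

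I do not expect any serious obstacle in this proof: all the analytic content, namely the continuity of the functionals $G\in\cG_c$ (which relies on the linear growth assumption on $b$ together with the uniformly integrable $p$-moments characterization of compactness in $\cP_p(\cC)$), is already packaged into Lemma~\ref{lem:continuityG} and Proposition~\ref{prop:space}. The only point deserving a brief verification is that the representation \eqref{eq:Vp} is truly an equivalent characterization of Definition~\ref{def:V}, which is standard: the martingale property is equivalent to the vanishing of the expectations of increments of $\phi(X_t)-\int_0^t L(\rho_s)(\phi)(X_s)ds$ tested against bounded continuous functionals of the past, and the family of test functionals $\prod_{j=1}^r h_j(x_{v_j})$ with $h_j\in C_c(\bR^d,\bR)$ is rich enough (by a monotone class argument) to characterize this martingale property on the canonical filtration.
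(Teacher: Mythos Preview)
Your proof is correct and follows essentially the same approach as the paper: use Lemma~\ref{lem:continuityG} to conclude each $G^{-1}(\{0\})$ is closed, take the intersection via~\eqref{eq:Vp}, and then invoke Proposition~\ref{prop:space}(i) together with the fact that a closed subset of a Polish space is Polish. The additional remarks you make about the equivalence~\eqref{eq:Vp} with Definition~\ref{def:V} are not needed here, as the paper treats that equivalence as already established prior to the proposition.
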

\begin{proof} 
For all $\rho^n\in \sV_p \to \rho^\infty$ in $\cP_p(\cC)$, it holds by Lem.~\ref{lem:continuityG} that $G(\rho^\infty) = 0$ for all $G\in\cG_p$, which shows that 
$\rho^\infty\in\sV_p$ by~\eqref{eq:Vp}. Hence, $\sV_p$ is closed. A closed subset of a Polish space is also Polish. By Prop.~\ref{prop:space}, $\sV_p$ is Polish. 
\end{proof}

\subsection{Dynamical systems}\label{sec:ergo}

Recall the definition of the shift $\Theta_t(x) = x_{t+\cdot}$ defined on
$\cC$. Let us equip the space $\sV_p$ assumed nonempty with the trace topology
of $\cP_p(\cC)$, making it a Polish space (see Prop.~\ref{prop:Vp}).
 With this at hand, one can readily
check that the function $\Phi: [0,\infty) \times \sV_p \to \sV_p$ defined as
$(t,\rho) \mapsto \Phi_t(\rho) = (\Theta_t)_\# \rho$ is a semi-flow on the space
$(\sV_p, \sW_p)$, 
in the sense that $\Phi$ is continuous, $\Phi_0(\cdot)$
coincides with the identity, and $\Phi_{t+s} = \Phi_t \circ \Phi_s$ for all
$t,s\geq 0$, see~\cite{ben-(cours)99} for an exposition of the concepts
related to semi-flows. 
The omega limit set of $\rho\in \sV_p$ for this semi-flow is the set
$\omega(\rho)$ defined by:
\[
\omega(\rho) \eqdef \bigcap_{t>0} \overline{\{\Phi_s(\rho) \, : \, s>t\}}\,.
\]
Equivalently, $\omega(\rho)$ is the set of ${\mathsf W}_p$-limits of sequences
of the form $(\Phi_{t_n}(\rho))$ where $t_n\to\infty$.  A point $\rho\in
\sV_p$ is called recurrent if $\rho\in \omega(\rho)$. The Birkhoff center 
$\BC_p$ is defined as the closure of the set of recurrent points:
\[
\BC_p \eqdef \overline{\{\rho\in \sV_p\,:\, \rho\in \omega(\rho)\}}\,.
\]
{By extension, given a measure $\mu\in \cP_p(\bR^d)$, we say that $\mu$
  is a \emph{recurrent marginal McKean-Vlasov measure} if there exists a recurrent measure $\rho\in \sV_p$ such that $\rho_0=\mu$.
  We denote by $\BC_p^0$ the closure of recurrent marginal McKean-Vlasov measures, that is,
  \begin{equation}
  \BC_p^0 = \overline{\{\rho_0\,:\, \rho\in \sV_p,\,\rho\in \omega(\rho)\}}\,,\label{eq:BC0}
\end{equation}
  or in short, $\BC_p^0 = (\pi_0)_\#(\BC_p)$.
}


\begin{definition}
  \label{def:lyapunov}
Consider the semi-flow $\Phi$ and  a non-empty set $\Lambda\subset\sV_p$.  A lower semi-continuous function
$F:\sV_p\to\bR$ is called a Lyapunov function for the set $\Lambda$ if,
for every $\rho\in \sV_p$ and every $t>0$, $F(\Phi_t(\rho)) \leq F(\rho)$,
and $F(\Phi_t(\rho) )< F(\rho)$ whenever $\rho\notin\Lambda$.
\end{definition}
The following result is standard.
\begin{prop}
  \label{prop:BC}
  Let $p>0$. If $F$ is a Lyapunov function for the set $\Lambda$,
  then $\BC_p \subset \overline\Lambda\,.$
\end{prop}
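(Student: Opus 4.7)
The plan is to prove the stronger statement that every recurrent point already lies in $\Lambda$ itself, from which $\BC_p \subset \overline\Lambda$ follows by taking closures. So I would fix $\rho \in \sV_p$ with $\rho \in \omega(\rho)$ and aim to derive a contradiction from the assumption $\rho \notin \Lambda$.

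First, from the definition of $\omega(\rho)$, I would extract a strictly increasing sequence $0 < t_1 < t_2 < \dots$ with $t_n \to \infty$ such that $\Phi_{t_n}(\rho) \to \rho$ in $\sW_p$. The key monotonicity observation I would exploit is that the sequence $(F(\Phi_{t_n}(\rho)))_n$ is non-increasing: using the semi-flow identity $\Phi_{t_n} = \Phi_{t_n - t_1} \circ \Phi_{t_1}$ and applying the non-increase property of $F$ along $\Phi$ to the point $\Phi_{t_1}(\rho)$ with time $t_n - t_1 \geq 0$ yields $F(\Phi_{t_n}(\rho)) \leq F(\Phi_{t_1}(\rho))$ for all $n \geq 1$.

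Next, under the contradiction hypothesis $\rho \notin \Lambda$, the strict decrease clause in Def.~\ref{def:lyapunov} applied at time $t_1 > 0$ would give $F(\Phi_{t_1}(\rho)) < F(\rho)$. Combining this strict inequality with the monotonicity above gives $\limsup_n F(\Phi_{t_n}(\rho)) \leq F(\Phi_{t_1}(\rho)) < F(\rho)$. On the other hand, the lower semi-continuity of $F$ together with $\Phi_{t_n}(\rho) \to \rho$ forces $F(\rho) \leq \liminf_n F(\Phi_{t_n}(\rho))$. Chaining the two bounds produces $F(\rho) < F(\rho)$, the sought contradiction, so $\rho \in \Lambda$.

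There is no substantial obstacle here: the proof is just the clean interplay of the three ingredients in the definition of a Lyapunov function (monotonicity, strict decrease off $\Lambda$, lower semi-continuity) with the return property of recurrent points. The only small point to check is that the return times $t_n$ can be taken strictly positive, which is automatic since $t_n \to \infty$, so that the strict decrease clause is applicable at $t_1$.
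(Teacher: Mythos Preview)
Your proof is correct and follows essentially the same approach as the paper: both arguments show that a recurrent point must lie in $\Lambda$ by combining the monotonicity of $F$ along orbits, the lower semicontinuity of $F$, and the strict decrease off $\Lambda$. The only cosmetic difference is that the paper argues directly that $t\mapsto F(\Phi_t(\rho))$ is constant (hence $\rho\in\Lambda$), whereas you obtain the same conclusion by contradiction via the strict inequality at $t_1$.
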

\begin{proof}
The limit $\ell:=\lim_{t\to\infty}F(\Phi_t(\rho))$ is well-defined because
$F(\Phi_t(\rho))$ is non increasing.  Consider a recurrent point $\rho\in
\sV_p$, say $\rho=\lim_{n} \Phi_{t_n}(\rho)$. Clearly $F(\rho)\geq
F(\Phi_{t_n}(\rho))\geq \ell$.  Moreover, by lower semicontinuity of $F$, $\ell
= \lim_n F(\Phi_{t_n}(\rho))\geq F(\rho)$. Therefore, $\ell$ is finite, and
$F(\rho)=\ell$.  This implies that $t\mapsto F(\Phi_t(\rho))$ is constant. By
definition, this in turn implies $\rho\in \Lambda$, which concludes the proof.
\end{proof}

\section{Main results}
\label{sec:main}


\subsection{\texorpdfstring{Interpolated process and weak$\star$ limits}{Interpolated Process and Weak-star Limits}}

Let $(\Omega,\cF,\bP)$ be a probability space. Let $d > 0$ be an integer.  For
each $n\in\bN^*$, consider the random sequence~\eqref{eq:main} starting with the
$n$--uple $(X_0^{1,n},\dots, X_0^{n,n})$ of random variables $X_0^{i,n} \in
\bR^d$, with $((\xi_{k}^{i,n})_{i\in[n]})_{k\in \bN^*}$ and
$((\zeta_{k}^{i,n})_{i\in[n]})_{k\in \bN^*}$ being $\bR^{d\times n}$--valued
random noise sequences. For each of integer $n > 0$, define the filtration 
$(\mathcal F_k^n)_{k\in\bN}$ as in Eq.~(\ref{eq:filtration}) or, more generally,
as any filtration such that the following random variables
\[
(X_0^{i,n})_{i\in[n]}, ((\xi_\ell^{i,n})_{i\in[n]})_{\ell\leq k},
((\zeta_\ell^{i,n})_{i\in[n]})_{\ell\le k}) 
\]
belong to $\cF_k^n$.
Consider the following assumptions:
\begin{assumption}\label{hyp:gamma}
    The sequence $(\gamma_k)$ is a non-negative deterministic sequence satisfying 
    \[\lim_{k\to\infty}\gamma_k = 0,\, \text{and } \sum_k\gamma_k = +\infty.\]
\end{assumption}

Recall the definition $\mu_k^n\eqdef \frac 1n\sum_{i=1}^n \delta_{X^{i,n}_k}$.
\begin{assumption}
\label{hyp:algo} 
The following hold{s } true.
\begin{enumerate}[i)]
\item 
\label{hypenum: exchangeable}
For each $n$, $( (X_0^{i,n},(\zeta_k^{i,n})_{k\in\bN} ,(\xi_k^{i,n})_{k\in\bN}))_{i\in [n]}$ is exchangeable as a n--uple of $\bR^d\times(\bR^d)^\bN\times (\bR^d)^\bN$--valued random variables.

\item 
\label{hypenum: va} 
{It holds that
$\displaystyle{\sup_{k,n} \bE\|\xi_k^{1,n}\|^4} < \infty$. 
Moreover, for each $n > 0$, and each $i$,$j$, 
{\begin{align*}
           &\espcond{\xi_{k+1}^{1,n}}{\cF_k^n}{=0}\, \\
             &\bE\left[\xi_{k+1}^{i,n}\p{\xi_{k+1}^{j,n}}^T \, | \, \cF_k^n\right] 
  =  \sigma(X_k^{i,n},\mu_k^n)\sigma(X_k^{j,n},\mu_k^n)^T \un_{i=j}\,,
\end{align*}}
}
\item
\label{hypenum: eta}
For each $k$, and each $n$,  $\displaystyle{ \bE\|\zeta_k^{1,n}\|} < \infty$, and
\[
\lim_{(k,n)\to(\infty,\infty)} \bE \left\| 
  \bE\left[ \zeta^{1,n}_{k+1}  \, | \, \cF_k^n \right] \right\| = 0\,.
\] 
\end{enumerate}
\end{assumption}

\begin{remark}
\label{remark:simple}
  Assumption~\ref{hyp:algo}--\eqref{hypenum: exchangeable} holds under the stronger assumption that 
  the $n$-uple $(X_0^{i,n})_{i\in[n]}$ is exchangeable, $(\xi^{i,n}_k)_{i\in[n],k\in\mathbb{N}}$ is an i.i.d. sequence independent of $(X_0^{i,n})_{i\in[n]}$, and $\zeta_k^{1,n} = 0$ for every $k$.
\end{remark}

{ Define $\kappa_k(\mu) = \int \|x\|^kd\mu(x)$.
\begin{assumption}
  \label{hyp:moment}
  The following conditions hold:
  \begin{enumerate}[i)]
  \item $\sup_{n}\bE\norm{X^{1,n}_0}^2 <\infty$ and $\sup_{k,n}\bE\norm{\zeta^{1,n}_k}^2 <\infty$,
    \item  There exist $c,C>0$, such that for all $\mu\in\cP_2(\bR^d)$,
      \begin{equation}\label{eq:dis1}
        \int\ps{x,b(x,\mu)}\dr\mu(x) \le -c\kappa_2(\mu) + C\,.
      \end{equation}
\end{enumerate}
\end{assumption}
\addtocounter{assumption}{-1}

\newcommand{\lastAssumption}{\arabic{assumption}}

\renewcommand{\theassumption}{\lastAssumption$'$}

\begin{assumption}
\label{hyp:mombis}
  In addition to Assumption~\ref{hyp:moment}, the following hold:
  \begin{enumerate}[i)]
  \item $\sup_{n}\bE\norm{X^{1,n}_0}^4 <\infty$ and $\sup_{k,n}\bE\norm{\zeta^{1,n}_k}^4 <\infty$,
    \item There exists constants $c, C>0$ such that for  all $\mu\in\cP_4(\bR^d)$,
    \begin{equation}\label{eq:dis2}
      \int  \ps{x,b(x,\mu)}\norm{x}^2 \dr \mu(x) \le   - c \kappa_4(\mu)+ C\p{1+\kappa_2(\mu) }\!\! \p{1+\sqrt{\kappa_4(\mu)}}\,,
    \end{equation}
\end{enumerate}
\end{assumption}

\renewcommand{\theassumption}{\arabic{assumption}}
Section \ref{sec:GM} includes an example for which Assumptions~\ref{hyp:moment} and \ref{hyp:moment}$'$ are satisfied.
\begin{remark}
Assumption~\ref{hyp:moment} can be replaced by the milder condition that 
\(\sup_{k,n} \mathbb{E}\| X_k^{1,n}\|^2 + \mathbb{E}\| \zeta_k^{1,n}\|^2 < \infty\). 
Similarly, Assumption~\ref{hyp:moment}$'$ can be replaced by the condition that 
\((\|X_k^{1,n}\|^2)_{k,n}\) and \((\|\zeta_k^{1,n}\|^2)_{k,n}\) are uniformly integrable. 
The results of this paper hold under these milder, but less easily verifiable assumptions.
\end{remark}



}

Recalling the definitions of the interpolated processes $\bar X^{i,n}$ 
in~\eqref{interp}, and the definition of the occupation measure $m^n$ in~\eqref{eq:m}, 
we shall consider the \emph{shifted} occupation measure
$$
\Phi_t(m^n) = \frac 1n\sum_{i=1}^n \delta_{\Theta_t(\bar X^{i,n})}\,,
$$
for each $n\in \bN^*$ and each $t\in (0,+\infty)$. Note that $\Phi_t(m^n)$ is a r.v. on $\cP_p(\cC)$. We refer to the set 
\begin{equation}
\label{M-acc} 
  \cM := 
\acc_{(t,n)\to(\infty,\infty)} 
\left(\left\{(\Phi_t(m^n))_\#\bP \right\}\right) 
\end{equation} 
as the set of {weak$\star$} accumulation points of the probability distributions of
$\Phi_t(m^n)$ as $(t,n)\to(\infty,\infty)$.  In other words, $\cM$ is the
set of measures $M\in \cP(\cP_p(\cC))$ for which there is a sequence
$(t_n,\varphi_n)_n$ on $(0,\infty)\times \bN^*$, such that $t_n\to_n\infty$,
$\varphi_n\to_n\infty$, and $(\Phi_{t_n}(m^{\varphi_n}))$ converges in
distribution to $M$.
{

We now state the main results of this paragraph.
Prop.~\ref{prop:tight} shows that the set
$\cM$ is non-empty.
Prop.~\ref{prop:limit-process} 
shows that any $M\in \cM$
is supported by the set of McKean-Vlasov distributions.
\begin{prop}\label{prop:tight}
    Let $1\le p <2$. Let  Assumptions~\ref{hyp:b}--\ref{hyp:moment} hold. Then, $$\sup_{k,n}\bE\| X_k^{1,n}\|^2<\infty\,.$$
    Moreover, for any $j\in \bN^*$, the family of measures $\{(\Phi_t(m^n))_\#\bP:t\geq 0,n\in \bN^*\}$ 
    is relatively compact in  $\cP(\cP_p(\cC))$.

If Assumption~\ref{hyp:moment}$\,'$  holds, the conclusion is still valid for $p=2$, and, moreover, $\sup_{k,n}\bE\| X_k^{1,n}\|^4<\infty$.
\end{prop}
\begin{proof}
  See Sec.~\ref{sec:tight}.
\end{proof}
}

\begin{prop}
\label{prop:limit-process}
Let $1\le p < 2$. Under Assumptions~\ref{hyp:b}--\ref{hyp:moment}, 
$\sV_p$ is a non-empty closed set. Moreover, $M(\sV_p)=1$ for every $M\in \cM$. 
If Assumption~\ref{hyp:moment}$\,'$  holds, the conclusion is still valid for $p=2$.
\end{prop}
{
  \begin{proof}
    See Sec.~\ref{sec:support}.
  \end{proof}}

\subsection{Ergodic convergence}\label{sec:lya}
We provide the proof of the following theorem in Sec.~\ref{sec:ergodicTH}.
\begin{theorem}
\label{th:BC}
 Let $1\le p < 2$. Under Assumptions~\ref{hyp:b}--\ref{hyp:moment}, {$\BC_p$ is non-empty.} 
  $$
  \frac 1t\int_0^t \sW_p(\Phi_s(m^n),\BC_p)\,\dr s \xrightarrow[(t,n)\to (\infty,\infty)]{\bP} 0\,,
  $$
If Assumption~\ref{hyp:moment}$\,'$  holds, the statement is still valid for $p=2$.
\end{theorem}

  {
  \begin{coro}
  \label{cor:S}
 Let $1\le p < 2$. Under Assumptions~\ref{hyp:b}--\ref{hyp:moment},
  $$
  \frac {\sum_{l=1}^k \gamma_l W_p( \mu_l^n,\BC_p^0)}{\sum_{l=1}^k \gamma_l} \xrightarrow[(k,n)\to (\infty,\infty)]{\bP} 0\,.
  $$
  The same statement holds if $W_p(\,\cdot\,,\,\cdot)$ is replaced by
  $W_p(\,\cdot\,,\,\cdot)^p$.
Finally, if Assumption~\ref{hyp:moment}$\,'$ holds, the conclusion is still valid for $p=2$.
\end{coro}
\begin{proof}
The proof is provided in Sec.~\ref{sec:ergodicCO}.  
\end{proof}
\begin{remark}
  The fact that the Birkhoff center \(\BC_p\) is non empty
  follows from the combination of Lem.~\ref{lem:tighM} and Lem.~\ref{lem:invariant}. Specifically, Lem.~\ref{lem:tighM} establishes the existence of measures, which, by Lem.~\ref{lem:invariant}, can only be supported by \(\BC_p\).
\end{remark}
\begin{remark}
  In simple cases, $\BC_p$ is reduced to a singleton, which corresponds to the unique stationary McKean-Vlasov distribution.
  For instance, 
  this happens under sufficient but strong assumptions on $b$ and $\sigma$, which ensure a \emph{uniform-in-time propagation of chaos} \cite{Chaintron_2022,malrieu2001logarithmic,durmus2020elementary}.
  We refer to Sec.~\ref{sec:GM} for a discussion.

  Besides this case, the  McKean–Vlasov process potentially admits multiple stationary measures. In such a case, $\BC_p$ contains multiple points.
  This scenario is common, and interesting regarding practical applications.
  A first example can be found in ~\cite{del2019uniform,herrmann2010non}, in the context of the Granular media equation, see also Rem.~\ref{remark:nonUnicity}.
      A second example is encountered in the case of consensus based optimization methods \cite{carrillo2018analyticalframeworkconsensusbasedglobal,Fornasier_2024}, where,
  under the assumption of a constant noise intensity $\sigma$, the limiting McKean-Vlasov process potentially admits several stationary measures.
  A third example, in the case $\sigma=0$, is given by Stein Variational Gradient Descent (SVGD) algorithm \cite{liu2017stein}.


\end{remark}


 }

{
  Finally, let us review some consequences  regarding linear functionals.
  Denote by $\mathrm{Lip}_L$ the set of Lipschitz continuous functions on $\bR^d\to \bR$,
  whose Lipschitz constant is no larger than $L\geq 0$.
  Define:
  $$
  \BC_p^0(f) := \left\{ \int fd\mu\,:\,\mu\in \BC_p^0\right\}\,.
  $$

\begin{coro}\label{cor:cor2}
  Let $1\le p < 2$, and let Assumptions~\ref{hyp:b}--\ref{hyp:moment} hold true. Then, for every $L\geq 0$
  \begin{align*}
 &           \sup_{f\in\mathrm{Lip}_L}\mathsf{d}\p{\frac {\sum_{i\in[n],l\in[k] } \gamma_lf(X^{i,n}_l)}{n\sum_{l\in[k]}\gamma_l   }, \mathrm{conv}(\BC_p^0(f))} \xrightarrow[(k,n)\to (\infty,\infty)]{\bP} 0\,. 
     \end{align*}
     The conclusion remains valid for $p=2$ under Assumption~\ref{hyp:moment}'.
\end{coro}
\begin{proof}
  See Sec.~\ref{sec:co2}.
\end{proof}

\subsection{The case of a unique recurrent point}

In this subsection, we will present additional results in the special case where the following assumption holds.
\begin{assumption}\label{hyp:sing}
    There exists $\rho^*\in\cP_p(\cC)$ such that $\BC_p \subset \{\rho^*\}$.
\end{assumption}
We observe that, under Assumptions~\ref{hyp:b}--\ref{hyp:moment}, $\BC_p$ is non-empty (see Th.~\ref{th:BC}). Consequently, under Assumptions~\ref{hyp:b}--\ref{hyp:sing}, we have $\BC_p = \{\rho^*\}$.


Let \( n \in \mathbb{N}^* \) and \( j \leq n \).  
One may consider the \emph{law} of the family of random variables \( (X^{1,n}_l, \dots, X^{j,n}_l) \):
\[
I^{j,n}_l := \big(X^{1,n}_l, \dots, X^{j,n}_l \big)_\#\mathbb{P} = \mathbb{P}\big( ( X^{1,n}_l, \dots, X^{j,n}_l ) \in \,\cdot\,\big).
\]
For instance, \( I^{1,n}_l \) is the law of the particle \( X^{1,n}_l \),  
which is equal to the law of \( X^{i,n}_l \) for any \( i \), due to the exchangeability.
\begin{coro}\label{cor:law}
     Under Assumptions~\ref{hyp:b}--\ref{hyp:sing}, we obtain for every $j\in\bN$
\begin{equation}\label{eq:ergLaw}
\frac{\sum_{\ell\in[k]} \gamma_\ell W_p\big( I^{j,n}_\ell, (\rho^*_0)^{\otimes j} \big)}{\sum_{l\in[k]} \gamma_\ell} \xrightarrow[(k,n) \to (\infty, \infty)]{} 0\,,
\end{equation}
where \( W_p \) denotes the Wasserstein distance of order \( p \) on \( \mathcal{P}_p((\mathbb{R}^d)^j) \), and \( (\rho^*_0)^{\otimes j} \) is the \( j \)-fold tensor product of \( \rho^*_0 \).  
\end{coro}
\begin{proof}
    See Sec.~\ref{sec:cor2}.
\end{proof}
Eq.~\eqref{eq:ergLaw} can be interpreted as a propagation of chaos result in the long run. This should be compared to standard propagation of chaos results, which are usually stated over a finite time interval \cite{Chaintron_2022}.

Following \cite{bolte2020longtermdynamicssubgradient,bianchi2023closedmeasureapproachstochasticapproximation}, let us introduce the notion of essential accumulation set.
We say that a measure $\mu \in \cP_p(\bR^d)$ is an \emph{essential accumulation point} of $(I^{1,n}_k)_{k,n}$,
if for every neighborhood $U$ of $\mu$, 
\begin{equation*}
\limsup_{ (k,n) \to (\infty,\infty)} {\frac{\sum_{\ell\in[k]} \gamma_\ell\un_U(I^{1,n}_{\ell})}{\sum_{\ell\in[k]} \gamma_\ell}}  >0\,.
\end{equation*}
This can be interpreted as follows.
An essential accumulation point $\mu$ is an accumulation point, with the property that the particle distribution $I^{1,n}_k= \bP(X^{1,n}_k\in \cdot)$ spends substantial time in the neighborhood of $\mu$.
\begin{coro}\label{cor:essacc}
    Under Assumptions~\ref{hyp:b}--\ref{hyp:sing},
$\rho^*_0$ is the unique essential accumulation point of $(I^{1,n}_k)_{k,n}$.
\end{coro}
\begin{proof}
    See Sec.~\ref{sec:cor3}.
\end{proof}
In other terms, as $(k,n)$ tend to infinity,
the law $\bP(X^{1,n}_k\in\cdot)$ spends most of its time in the neighborhood of $\rho_0^*$.

}
\subsection{Pointwise convergence to a global attractor} 
\label{cvg-A} 

Depending on the vector field $b$, it is often the case that each measure 
$\rho \in \sV_p$ is uniquely determined by its value 
$\rho_0 = (\pi_0)_\# \rho \in \cP_p(\bR^d)$ in the sense that there exists 
a semi-flow $\Psi : [0,\infty) \times \cP_p(\bR^d) \to \cP_p(\bR^d), 
(t,\nu) \mapsto \Psi_t(\nu)$, defined on $[0,\infty) \times \cP_p(\bR^d)$, and 
such that 
\begin{equation}
\label{flot-P(R)} 
\rho \in \sV_p \Leftrightarrow \forall t \geq 0, \rho_t = \Psi_t(\rho_0) . 
\end{equation} 
We shall say that in this situation, the class $\sV_p$ has a semi-flow 
structure on $\cP_p(\bR^d)$. 

The granular media model detailed in Sec.~\ref{sec:GM} below is a typical
example where such a situation occurs. 

In this section, we are interested in the behavior of the measures $\mu_k^n$ as
$(k,n) \to (\infty,\infty)$, termed the ``pointwise'' convergence of these
measures, when the semi-flow $\Psi$ has a global attractor.  We recall here
that a set $A_p \subset \cP_p(\bR^d)$ is said invariant for the semi-flow
$\Psi$ if $\Psi_t(A_p) = A_p$ for all $t \geq 0$; A nonempty compact invariant
set $A_p \subset \cP_p(\bR^d)$ is a global attractor for the semi-flow $\Psi$
if 
\[
\forall \nu\in \cP_p(\bR^d), \quad 
  \lim_{t\to \infty } W_p(\Psi_t(\nu), A_p ) = 0  \,,
\]
and furthermore, if there exists a neighborhood $\cN$ of $A_p$ in
$\cP_p(\bR^d)$ such that this convergence is uniform on $\cN$. Such a
neighborhood is called a fundamental neighborhood of $A_p$. 

The following result is proven in Sec.~\ref{prf-ptwise}.  
\begin{theorem}
\label{th:pwConvergence}
Let $p\in [1,2]$, and let Assumptions~\ref{hyp:b}, \ref{hyp:gamma},
and~\ref{hyp:algo} hold true.  Let
Assumption~\ref{hyp:moment} or the stronger
Assumption~\ref{hyp:moment}\,' hold true according to
whether $p < 2$ or $p=2$ respectively. Assume in addition that the 
$\sV_p$ has a semi-flow structure on $\cP_p(\bR^d)$ as specified 
in~\eqref{flot-P(R)}, and that this semi-flow $\Psi$ admits a global attractor 
$A_p$. Then,  
\[
W_p\left( \mu_k^n,A_p \right)
   \xrightarrow[(k,n)\to (\infty,\infty)]{\bP} 0\,.
\]
\end{theorem}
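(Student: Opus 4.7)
I will argue by contradiction. Assume there exist $\epsilon,\eta>0$ and indices $(k_j,n_j)$ with $k_j,n_j\to\infty$ such that $\bP(W_p(\mu_{k_j}^{n_j},A_p)\geq\epsilon)\geq\eta$ for every $j$. By Proposition~\ref{prop:tight}, the family $\{\Phi_s(m^n):s\geq 0,n\in\bN^*\}$ is tight in $\cP_p(\cC)$, so $\{\mu_k^n\}=\{(\pi_0)_\#\Phi_{\tau_k}(m^n)\}$ is tight in $\cP_p(\bR^d)$. A diagonal argument over $T\in\bN^*$ yields a further subsequence (not relabeled) along which $\mu_{k_j}^{n_j}\to\mu^*$ in distribution jointly with $\Phi_{\tau_{k_j}-T}(m^{n_j})\to\rho^T$ in distribution for every $T\in\bN^*$.

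Since $\tau_{k_j}-T\to\infty$ as $j\to\infty$ (Assumption~\ref{hyp:gamma}), Proposition~\ref{prop:limit-process} gives $\rho^T\in\sV_p$ almost surely. The exact identity $\mu_{k_j}^{n_j}=(\pi_T)_\#\Phi_{\tau_{k_j}-T}(m^{n_j})$, combined with continuity of $(\pi_T)_\#:(\cP_p(\cC),\sW_p)\to(\cP_p(\bR^d),W_p)$, passes to the joint distributional limit (the graph of a continuous map is closed) and yields $\mu^*=(\pi_T)_\#\rho^T$ almost surely. Invoking the semi-flow structure~\eqref{flot-P(R)} then gives $\mu^*=\Psi_T(\nu^T)$ almost surely, where $\nu^T\eqdef\rho^T_0$. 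So $\mu^*$ admits a backward orbit under $\Psi$ of every length~$T$, and the distributions of $(\nu^T)_{T\in\bN^*}$ are tight in $\cP_p(\bR^d)$ (again by Proposition~\ref{prop:tight}).

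Given $\delta,\epsilon'>0$, pick a compact $K\subset\cP_p(\bR^d)$ with $\bP(\nu^T\in K)\geq 1-\delta$ for every $T$. The global attractor $A_p$ absorbs compact sets uniformly: for each $\nu\in K$, pointwise attraction supplies $t_\nu$ with $\Psi_{t_\nu}(\nu)\in\cN$, continuity of $\Psi_{t_\nu}$ promotes this to an open neighborhood of $\nu$, and a finite subcover of $K$ combined with the uniform attraction on $\cN$ produces $T_0$ with $\sup_{\nu\in K}W_p(\Psi_{T_0}(\nu),A_p)<\epsilon'$. Therefore $\bP(W_p(\mu^*,A_p)\geq\epsilon')\leq\bP(\nu^{T_0}\notin K)\leq\delta$. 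Letting $\delta\downarrow 0$ for each fixed $\epsilon'>0$ forces $\mu^*\in A_p$ a.s., and the Portmanteau theorem applied to the closed set $\{W_p(\cdot,A_p)\geq\epsilon\}$ gives $\eta\leq\bP(W_p(\mu^*,A_p)\geq\epsilon)=0$, contradicting the choice of $\eta$.

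The delicate step is the joint extraction that yields the almost sure identity $\mu^*=\Psi_T(\nu^T)$ for every $T$ simultaneously on a common probability space; this relies on the fact that continuous functional relations survive passage to distributional limits. A secondary point is making the uniform absorption of compacts by $A_p$ explicit from the definition of global attractor stated in the paper.
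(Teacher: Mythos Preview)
Your argument is correct and close in spirit to the paper's, but with two genuine differences worth noting.

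\textbf{Direction of the time shift.} You look \emph{backward}: fixing the bad times $\tau_{k_j}$ and shifting the occupation measure to start at $\tau_{k_j}-T$, so that $\mu_{k_j}^{n_j}=(\pi_T)_\#\Phi_{\tau_{k_j}-T}(m^{n_j})$ exactly. The paper looks \emph{forward}: it fixes a sequence $(t_n,\varphi_n)$ along which $\Phi_{t_n}(m^{\varphi_n})$ converges, then shows that $m^{\varphi_n}_{t_n+T}$ is close to $A_p$ for a suitable $T$; the contradiction is obtained by applying this with $t_n=s_n-T$ where $s_n$ are the bad times. Your backward shift is slightly cleaner because the identity $\mu_{k_j}^{n_j}=(\pi_T)_\#\Phi_{\tau_{k_j}-T}(m^{n_j})$ is exact, avoiding an implicit uniformity argument on $T$ that the paper glosses over.

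\textbf{Skorokhod vs.\ equality in law.} The paper invokes Skorokhod's representation to obtain $\tilde m^n\to\tilde m^\infty$ a.s.\ and then argues on a single probability space. You avoid this: since $(\pi_T)_\#$ is continuous, $\mu_{k_j}^{n_j}\to(\pi_T)_\#\rho^T$ in distribution, so $\mu^*$ and $\Psi_T(\nu^T)$ have the \emph{same law}. That equality of laws is all you use downstream, because both $\bP(W_p(\mu^*,A_p)\geq\epsilon')$ and $\bP(\nu^{T_0}\notin K)$ depend only on marginal laws. So your worry in the last paragraph about realizing $\mu^*=\Psi_T(\nu^T)$ a.s.\ for all $T$ simultaneously on a common space is unfounded: you never need that. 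Replace ``almost surely'' by ``in law'' throughout and the argument is airtight without any joint realization.

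Your re-derivation of the uniform absorption of compacts (the paper's Lemma~\ref{lem:compact}) is correct; note only that the argument actually gives $\sup_{\nu\in K}W_p(\Psi_t(\nu),A_p)<\epsilon'$ for all $t\geq T_0$, so you may take $T_0\in\bN^*$ to match your diagonal extraction over integer $T$.
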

{
  \begin{remark}
  A typical scenario where the set $A_p$ exists and contains a single element is provided
  in Rem.~\ref{remark:nonUnicity}.
\end{remark}
}


\section{Granular media}
\label{sec:GM}
The proofs of the results relative to this section are  provided in Sec.~\ref{sec:proofGM}.

{In this paragraph, we review some properties of the set $\sV_2$ of McKean-Vlasov processes, in the case where {  $\sigma(x,\mu)=\sigma I_d$ for some real constant $\sigma\ge 0$ } and with a slight abuse of notation $b(x,\mu)=\int b(x,y) d\mu(y)$, with: }
\begin{equation}
b(x,y) := -\nabla V(x) - \nabla U(x-y)\,,\label{eq:bgm}
\end{equation}
where $V, U: \mathbb{R}^d \to \mathbb{R}$ are two functions satisfying the following assumption.
\begin{assumption}[Granular media] 
  \label{hyp:GM}
   The functions $V,U$ belong to $C^1(\bR^d,\bR)$. Moreover, there exists $\lambda,C,\beta>0$, such that for every $x,y\in\bR^d$, the following hold{s}:
  \begin{enumerate}[i)]
    \item $\ps{x,\nabla V(x)} \ge \lambda\norm{x}^2 -C$,
    \item $U(x)=U(-x)$, and $\ps{x,\nabla U(x)} \ge -C $,
    \item $\norm{\nabla V(x)} + \norm{\nabla U(x)} \le C(1+\norm{x})$,
    \item $\norm{\nabla V(x) - \nabla V(y)}+\norm{\nabla U(x)-\nabla U(y)} \le C(\norm{x-y}^\beta \vee \norm{x-y})$.
  \end{enumerate}
\end{assumption}
Under Assumptions~\ref{hyp:GM}, the vector field $b$ ans $\sigma$ satisfies Assumption~\ref{hyp:b}.
We will see later, as a byproduct of Th.~\ref{th:gm}, that the set $\sV_2$ of McKean-Vlasov distributions
associated to the field $b$ in Eq.~(\ref{eq:bgm}), is non empty.
We say $\mu\ll \mathscr L^d$ if $\mu\in\cP_2(\bR^d)$ admits continuously differentiable
density w.r.t. the Lebesgue measure $\mathscr{L}^d$, which we denote by $d\mu/d\mathscr{L}^d$.
Define the functional
$\mathscr H : \cP_2(\bR^d) \to (-\infty, \infty]$ as
$\mathscr H(\mu) = \mathscr F(\mu) + \mathscr V(\mu) + \mathscr U(\mu)$
with
\[
\mathscr F(\mu) = \left\{\begin{array}{ll} 
  \displaystyle{\int \sigma^2 \log\p{ \dd \mu {\mathscr L^d}(x)}\, \dr \mu(x) } 
 & \text{if } \mu \ll \mathscr{L}^d \\
 \infty & \text{otherwise} ,
\end{array}\right.   
\]
\[
\mathscr V(\mu) = \int V(x) \, d\mu(x), \quad \text{and} \quad 
\mathscr U(\mu) = \frac 12 \int\!\!\!\int U(x-y) \, d\mu(x)d\mu(y)  . 
\]
The following central result provides a central properties of the elements of $\sV_2$.
\begin{prop}
  \label{prop:helmholtz}
  Let Assumption~\ref{hyp:GM} hold true, and let $b$ be defined by~(\ref{eq:bgm}). Assume $\sigma>0$.
  Consider $\rho\in\sV_2$. Then, for every $t>0$,
  $\rho_t$ admits a density $x\mapsto \varrho(t,x)$ in $C^1(\bR^d,\bR)$ w.r.t. the Lebesgue measure. For every $t>0$, the functional $t\mapsto \mathscr{H}(\rho_t)$
  is finite, and satisfies for every $t_2>t_1>0$, 
\begin{equation}
  \label{eq:continuityBis-limit} 
 \mathscr H(\rho_{t_2}) -\mathscr H(\rho_{t_1})  =    - \int_{t_1}^{t_2} \int  \|v_t(x)\|^2 \varrho(t,x)dx dt \,,
\end{equation}
where $v_t$ is the vector field defined for every $x\in \bR^d$ by:
  \begin{equation}
  v_t(x) := -\nabla V(x)-\int \nabla U(x-y)d\rho_t(y)-\sigma^2\nabla \log\varrho(t,x)\,.
  \label{eq:vt}
\end{equation}
\end{prop}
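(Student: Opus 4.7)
The plan is to derive a Fokker-Planck equation for $\rho_t$ from the martingale characterization in Def.~\ref{def:V}, exploit the uniform ellipticity coming from $\sigma > 0$ to obtain regularity of the density, and finally compute the dissipation of $\mathscr{H}$ along the resulting continuity equation.

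\textbf{Step 1: Fokker-Planck equation and regularity.} Taking expectations in Def.~\ref{def:V} for every $\phi \in C_c^2(\bR^d,\bR)$, and using that $\sigma(x,y) = \sigma I_d$ gives $L(\rho_u)(\phi)(x) = \langle b(x,\rho_u),\nabla\phi(x)\rangle + \sigma^2 \Delta\phi(x)$, one obtains the weak Fokker-Planck equation $\partial_t \rho_t = \sigma^2 \Delta \rho_t - \nabla\cdot(b(\cdot,\rho_t)\rho_t)$. Freezing the map $t \mapsto \rho_t$, this is a linear parabolic PDE whose drift $x \mapsto b(x,\rho_t)$ has at most linear growth and is Hölder continuous thanks to Assumption~\ref{hyp:GM}. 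The uniform ellipticity from $\sigma > 0$ allows classical parabolic regularity (Aronson-type Gaussian bounds on the heat kernel, Duhamel representation, or Schauder estimates) to produce, for every $t > 0$, a positive density $\varrho(t,\cdot) \in C^1(\bR^d,\bR)$ with Gaussian upper bounds and a locally bounded gradient. An algebraic manipulation then rewrites the Fokker-Planck equation in the continuity form $\partial_t \varrho = -\nabla\cdot(\varrho\, v_t)$ with $v_t$ as in~\eqref{eq:vt}.

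\textbf{Step 2: Finiteness and dissipation.} The linear-growth bound in Assumption~\ref{hyp:GM}(iii) forces $|V(x)|,|U(x)| \leq C(1 + \|x\|^2)$, so $\mathscr{V}(\rho_t)$ and $\mathscr{U}(\rho_t)$ are finite since $\rho_t \in \cP_2(\bR^d)$; a lower bound on the entropy from comparison with a Gaussian of the same second moment, together with the upper bound on $\varrho$ from Step~1, give finiteness of $\mathscr{F}(\rho_t)$. Now fix $t_2 > t_1 > 0$ and differentiate each piece of $\mathscr{H}(\rho_t)$ along the continuity equation: one gets $\frac{d}{dt}\mathscr{V}(\rho_t) = \int \langle \nabla V, v_t\rangle\, d\rho_t$ by integration by parts; $\frac{d}{dt}\mathscr{U}(\rho_t) = \int \bigl\langle \int \nabla U(\cdot-y)\,d\rho_t(y),\, v_t\bigr\rangle\, d\rho_t$, using the symmetry $U(x)=U(-x)$ to collapse the two time derivatives; and $\frac{d}{dt}\mathscr{F}(\rho_t) = \sigma^2 \int (\log\varrho + 1)\,\partial_t\varrho\,dx = \sigma^2 \int \langle \nabla\log\varrho, v_t\rangle\, d\rho_t$, using mass conservation and one more integration by parts. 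Summing the three identities and recognizing that $-v_t = \nabla V + \int \nabla U(\cdot - y)\,d\rho_t(y) + \sigma^2 \nabla\log\varrho$, one obtains $\frac{d}{dt}\mathscr{H}(\rho_t) = -\int \|v_t\|^2\, d\rho_t$, and integrating in $t$ yields~\eqref{eq:continuityBis-limit}.

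\textbf{Main obstacle.} The principal technical difficulty is to rigorously justify the integration by parts and the differentiation under the integral sign, especially for the entropy: one needs decay of $\varrho$ and $\nabla\varrho$ at infinity together with local-in-time integrability of $\int \|\nabla\log\varrho\|^2\varrho\, dx$. The standard route is to work on a ball of radius $R$ with a regularized logarithm $\log(\varrho + \epsilon)$, to dominate all interior and boundary terms via the Gaussian bounds and the gradient estimates from Step~1, and to pass to the limit $\epsilon \to 0$ and $R \to \infty$ by dominated convergence. A secondary subtlety is the interplay between the weak martingale formulation of $\sV_2$ and the pointwise PDE smoothness needed for the dissipation identity; this is resolved by the smoothing property of the heat semigroup for $t > 0$.
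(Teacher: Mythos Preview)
Your outline is correct and matches the paper's approach: derive the weak Fokker--Planck/continuity equation from the martingale problem, invoke parabolic density estimates (the paper cites Menozzi et al.\ for two-sided Gaussian bounds under a H\"older drift with linear growth, which in particular supply the pointwise \emph{lower} bound on $\varrho$ needed to control $\log\varrho$ and $\nabla\log\varrho$), and then justify the formal dissipation computation by a regularization-and-limit argument. The paper's regularization differs only in mechanics---it mollifies the density in both space and time, mollifies $U,V$, applies a spatial cutoff $\chi_R$, and inserts the resulting smooth compactly supported test function $\psi_{\varepsilon,\delta,R}$ directly into the weak continuity equation before sending $\delta\to 0$, $\varepsilon\to 0$, $R\to\infty$---rather than your $\log(\varrho+\epsilon)$ on a ball, but both routes rest on the same density estimates.
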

Define $\cP_2^r(\bR^d)$ as the set of measures $\mu\in \cP_2(\bR^d)$ such that $\mu\ll\mathscr L^d$. Define:
  \begin{equation}
    \label{eq:S2gm}
  \cS \eqdef \{\mu\in \cP_2^r(\bR^d)\,:\, \nabla V + \int \nabla U(\,\cdot\,-y)d\mu(y) + \sigma^2 \nabla \log \frac{d\mu}{d\mathscr{L}^d} = 0\,\mu\text{-a.e.}\}\,.
\end{equation}

Finally, for every $\epsilon\geq 0$, define:
  \begin{equation}
    \label{eq:Lambda}
    \Lambda_\epsilon := \{\rho\in \sV_2\,:\,\exists\mu\in \cS,\, \forall t\geq \epsilon,\,\rho_t = \mu\}\,.
  \end{equation}

\begin{prop}
  \label{prop:gm}
  We posit the assumptions of Prop.~\ref{prop:helmholtz}. 
  For every $\epsilon>0$, the function $\rho\mapsto \mathscr H(\rho_\epsilon)$ is real valued on $\sV_2$, lower semicontinuous, 
  and is a Lyapunov function for the set $\Lambda_\epsilon$. Moreover, $$\BC_2\subset \overline{\Lambda_0}\,.$$
\end{prop}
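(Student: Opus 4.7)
The first claim follows immediately from Prop.~\ref{prop:helmholtz}, which states that $t \mapsto \mathscr{H}(\rho_t)$ is finite for every $t > 0$ whenever $\rho \in \sV_2$. For lower semicontinuity, I would first note that the evaluation $\pi_\epsilon : \cC \to \bR^d$ is $1$-Lipschitz on any finite time window, so $\rho^n \to \rho$ in $\sW_2$ forces $(\pi_\epsilon)_\#\rho^n \to (\pi_\epsilon)_\#\rho$ in $W_2(\bR^d)$. I would then decompose $\mathscr{H} = \mathscr{F} + \mathscr{V} + \mathscr{U}$ and invoke the classical $W_2$-lower-semicontinuity of the Boltzmann entropy $\mathscr{F}$, together with the $W_2$-continuity of $\mathscr{V}$ and $\mathscr{U}$; the latter follows from the at-most-quadratic growth of $V$ and $U$ implied by the linear-growth part of Assumption~\ref{hyp:GM}, combined with uniform integrability of $\|x\|^2$ along $W_2$-convergent sequences.

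\textbf{Lyapunov property.} The key input is Eq.~(\ref{eq:continuityBis-limit}) with $t_1 = \epsilon$ and $t_2 = t + \epsilon$, which immediately yields $\mathscr{H}((\Phi_t\rho)_\epsilon) - \mathscr{H}(\rho_\epsilon) \leq 0$. For the strict-decrease part I would argue by contrapositive. If equality holds for some $t > 0$, then the integral in~(\ref{eq:continuityBis-limit}) vanishes, so $v_s(x) = 0$ for $\rho_s$-a.e.\ $x$ and Lebesgue-a.e.\ $s \in [\epsilon, t+\epsilon]$. Comparing with the defining equation of $\cS$ in~(\ref{eq:S2gm}) shows that $\rho_{s_0} \in \cS$ for some such $s_0 \in (\epsilon, t+\epsilon)$. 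Because elements of $\cS$ are stationary for the McKean-Vlasov dynamics, I would invoke uniqueness of the nonlinear Fokker-Planck equation satisfied by the marginals of $\rho \in \sV_2$ with initial datum $\rho_{s_0}$ to obtain $\rho_u = \rho_{s_0}$ for all $u \geq s_0$. Choosing $s_0$ arbitrarily close to $\epsilon$ and using continuity of $u \mapsto \rho_u$ would then show that a single $\mu \in \cS$ works for all $u \geq \epsilon$, i.e.\ $\rho \in \Lambda_\epsilon$.

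\textbf{Inclusion $\BC_2 \subset \overline{\Lambda_0}$.} The proof of Prop.~\ref{prop:BC} actually establishes, beyond the closure statement, that every recurrent point of $(\sV_p, \Phi)$ belongs to $\Lambda$ as soon as the Lyapunov condition holds. Applying this fact for each $\epsilon > 0$, every recurrent point lies in $\bigcap_{\epsilon > 0} \Lambda_\epsilon$. From the definition of $\Lambda_\epsilon$ and the continuity of $u \mapsto \rho_u$ at $u = 0$ (the canonical process is $\rho$-a.s.\ continuous), this intersection coincides with $\Lambda_0$, so taking closures gives $\BC_2 \subset \overline{\Lambda_0}$.

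\textbf{Main obstacle.} The delicate step is the uniqueness argument inside the Lyapunov property: I must show that if $\rho \in \sV_2$ has $\rho_{s_0} = \mu \in \cS$, then $\rho_u = \mu$ for all $u \geq s_0$. Since Assumption~\ref{hyp:GM} only provides local Hölder regularity of $\nabla V, \nabla U$ and no convexity, uniqueness is not automatic; I expect it to require a Gronwall-type estimate on $W_2(\rho_u, \mu)^2$ that exploits the $C^1$ density supplied by Prop.~\ref{prop:helmholtz} to turn the Hölder control into a closed differential inequality.
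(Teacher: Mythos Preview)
Your arguments for real-valuedness, lower semicontinuity, and the inclusion $\BC_2\subset\overline{\Lambda_0}$ are essentially correct and parallel the paper's (the paper invokes Fatou for lower semicontinuity and uses recurrence $\rho_0=\lim\rho_{t_n}$ rather than continuity of $u\mapsto\rho_u$ at $0$, but these are cosmetic differences).

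The real gap is in the strict-decrease step. You correctly deduce $v_s=0$ $\rho_s$-a.e.\ for a.e.\ $s$ on the relevant interval, but then route through ``$\rho_{s_0}\in\cS$ is stationary $\Rightarrow$ by uniqueness of the nonlinear Fokker--Planck equation, $\rho_u=\rho_{s_0}$ for $u\geq s_0$''. This uniqueness is not available under Assumption~\ref{hyp:GM} (as you yourself flag), and more importantly it is \emph{not needed}. Once $v_s\equiv 0$ on an interval, feed any time-independent test function $\psi\in C_c^\infty(\bR^d)$ into the continuity equation of Lemma~\ref{lem:continuity}: the right-hand side vanishes, so $\int\psi\,d\rho_{t_2}=\int\psi\,d\rho_{t_1}$ for all $t_1,t_2$ in the interval, hence $s\mapsto\rho_s$ is constant there directly. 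Your ``main obstacle'' is a phantom.

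A secondary remark: the paper does not verify the full Lyapunov condition you attempt (equality at \emph{some} $t>0\Rightarrow\rho\in\Lambda_\epsilon$), but only the weaker statement that equality for \emph{all} $t$ implies $\rho\in\Lambda_\epsilon$. This is harmless because the proof of Prop.~\ref{prop:BC} first shows $t\mapsto F(\Phi_t(\rho))$ is constant for any recurrent $\rho$, so the ``for all $t$'' version is exactly what gets used. Working at that level from the start saves you from having to propagate the constancy of $\rho_s$ beyond $[\epsilon,\epsilon+t_0]$, which is where your argument was forced into the uniqueness detour.
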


We also need to consider a setting where $\sV_2$ has a semi-flow structure on
$\cP_2(\bR^d)$ as in~\eqref{flot-P(R)} in order to set the stage for the
pointwise convergence of the measures $\mu^n_k$ issued from our discrete 
algorithm. To that end, we shall appeal to the theory of the gradient flows in 
the space of probability measures as detailed in the treatise 
\cite{ambrosio2005gradient} of Ambrosio, Gigli and Savar\'e. The following 
additional assumption will be needed:
\begin{assumption}
\label{ass-gf}
The functions $U$ and $V$ satisfy the doubling condition. Namely, there
exists constants $C_U, C_V > 0$ such that
\[
U(x+y) \leq C_U \left( 1 + U(x) + U(y) \right) \quad \text{and} \quad 
V(x+y) \leq C_V \left( 1 + V(x) + V(y) \right). 
\]
\end{assumption}
\begin{prop} 
\label{GF-GM} 
Let Assumption~\ref{hyp:GM} hold true with $\beta = 1$, and let
Assumption~\ref{ass-gf} hold true. Then, for each $\rho \in \sV_2$, the curve 
$t \mapsto \rho_t$ belongs to the set of absolutely continuous functions 
$\AC_{\text{loc}}^2((0,\infty), \cP_2(\bR^d))$ as defined in 
\cite[Sec.~8.3]{ambrosio2005gradient}, and is completely determined by 
$\rho_0 \in \cP_2(\bR^d)$ as being the gradient flow of the functional 
$\mathscr H$ in $\cP_2(\bR^d)$. Thus, $\sV_2$ has a semi-flow structure, and
we write $\rho_t = \Psi_t(\rho_0)$. 
\end{prop}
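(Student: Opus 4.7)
I would combine the martingale characterization of
$\sV_2$ with the energy dissipation identity~\eqref{eq:continuityBis-limit}
of Prop.~\ref{prop:helmholtz}, and then invoke the Wasserstein gradient
flow machinery of \cite{ambrosio2005gradient}.

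First, for $\rho\in\sV_2$ and $\phi\in C_c^\infty(\bR^d,\bR)$, the martingale
property in Def.~\ref{def:V} implies that $t\mapsto \int\phi\,d\rho_t$ is
absolutely continuous with derivative $\int L(\rho_t)(\phi)\,d\rho_t$.
Integration by parts, using that $\sigma(x,y)=\sigma I_d$ and the $C^1$
density $\varrho(t,\cdot)$ from Prop.~\ref{prop:helmholtz}, yields the
Fokker--Planck equation
$\partial_t\varrho + \nabla\cdot(\varrho\,b(\cdot,\rho_t)) = \sigma^2\Delta\varrho$
in the distributional sense, equivalently
$\partial_t\varrho + \nabla\cdot(\varrho\,v_t)=0$ with $v_t$ as
in~\eqref{eq:vt}, after rewriting $\sigma^2\Delta\varrho = \sigma^2\nabla\cdot(\varrho\nabla\log\varrho)$.
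From~\eqref{eq:continuityBis-limit} and finiteness of $\mathscr H(\rho_t)$
for $t>0$, I obtain
$\int_{t_1}^{t_2}\|v_t\|^2_{L^2(\rho_t)}\,dt<\infty$ for every
$0<t_1<t_2<\infty$. Theorem~8.3.1 of~\cite{ambrosio2005gradient} then
places $t\mapsto\rho_t$ in $\AC_{\text{loc}}^2((0,\infty),\cP_2(\bR^d))$,
with metric derivative bounded by $\|v_t\|_{L^2(\rho_t)}$.

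Second, to identify $t\mapsto\rho_t$ as the gradient flow of $\mathscr H$,
I would verify that $-v_t$ lies in the Wasserstein subdifferential
$\partial\mathscr H(\rho_t)$ for a.e.~$t>0$. The doubling condition
(Assumption~\ref{ass-gf}) together with $\beta=1$ places $\mathscr V$ and
$\mathscr U$ in the setting of \cite[Ch.~10]{ambrosio2005gradient}, where
their first variations are the pointwise gradients $\nabla V$ and
$\int\nabla U(\cdot-y)\,d\rho_t(y)$ (symmetry of $U$ enters here), while
the entropy $\mathscr F$ contributes $\sigma^2\nabla\log\varrho$ as in
\cite[Sec.~10.4]{ambrosio2005gradient}. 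Their sum is precisely $-v_t$, and
the equality~\eqref{eq:continuityBis-limit} then forces
$|\rho'|(t)=\|v_t\|_{L^2(\rho_t)}=|\partial\mathscr H|(\rho_t)$, which is
the EDE gradient flow condition.

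Finally, for the semi-flow structure of $\Psi$, I would appeal to
well-posedness of the McKean--Vlasov SDE. Under $\beta=1$ the drift is
Lipschitz in the spatial variable and $W_2$-Lipschitz in the measure
variable, and $\sigma$ is constant, so a standard fixed-point argument on
$\cP_2(\cC)$ (\`a la Sznitman) yields, for each $\nu\in\cP_2(\bR^d)$, a
unique solution to the associated McKean--Vlasov SDE whose path law belongs
to $\sV_2$. Any $\rho\in\sV_2$ with $\rho_0=\nu$ must coincide with this
law, so $\Psi_t(\nu):=\rho_t$ is well-defined and the semi-flow properties
are inherited from the Markovian construction. The hard step will be
Step~2, namely rigorously identifying $-v_t$ as an element of
$\partial\mathscr H(\rho_t)$ without any displacement convexity hypothesis:
this is precisely where both the doubling condition (ensuring that
$\mathscr V+\mathscr U$ is proper on $\cP_2(\bR^d)$ and amenable to the
first-variation calculus of \cite[Sec.~10.4]{ambrosio2005gradient}) and
the $C^1$ regularity of $\varrho(t,\cdot)$ (required for the entropy part)
come into play.
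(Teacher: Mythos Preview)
Your proposal is essentially correct but proceeds differently from the paper. The paper does not attempt to verify directly that $-v_t\in\partial\mathscr H(\rho_t)$, nor does it invoke McKean--Vlasov SDE well-posedness. Instead, it first observes that $\beta=1$ makes $\nabla U,\nabla V$ Lipschitz, hence $U,V$ are \emph{weakly convex}; together with the doubling condition and differentiability, this places $\mathscr H$ in the scope of \cite[Th.~11.2.8]{ambrosio2005gradient}, which gives existence \emph{and} uniqueness of a curve $t\mapsto\nu_t$ characterized by five concrete properties (continuity at $t=0$, uniform second moments, existence of a density in $L^1_{\text{loc}}((0,\infty);W^{1,1}_{\text{loc}})$, the continuity equation with the explicit velocity $w_t=v_t$, and $\|w_t\|_{L^2(\nu_t)}\in L^2_{\text{loc}}$), this curve being the gradient flow of $\mathscr H$. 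The paper then simply checks that $t\mapsto\rho_t$ satisfies all five properties, using Lemma~\ref{lem:densityBis}, Lemma~\ref{lem:continuity}, and Prop.~\ref{prop:helmholtz}; uniqueness in the cited theorem immediately yields the semi-flow structure.

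Your route separates the two conclusions: you identify $-v_t$ in $\partial\mathscr H(\rho_t)$ via the subdifferential calculus of \cite[Ch.~10]{ambrosio2005gradient} and then deduce EDE from~\eqref{eq:continuityBis-limit}, while for uniqueness you appeal to Sznitman-type well-posedness of the McKean--Vlasov SDE. Both steps are valid under the stated hypotheses, and your argument has the merit of making the probabilistic uniqueness mechanism explicit. The paper's approach is more economical, as the single citation to \cite[Th.~11.2.8]{ambrosio2005gradient} delivers gradient-flow identification and uniqueness simultaneously. One small point: in your Step~2 you emphasize the doubling condition, but the ingredient that actually makes the subdifferential calculus of \cite[Ch.~10]{ambrosio2005gradient} work (and that the paper singles out) is the $\lambda$-convexity of $U$ and $V$, which follows from $\beta=1$; the doubling condition plays its role rather in ensuring that $\mathscr U,\mathscr V$ are proper and that the minimal-selection subdifferential has the expected form.
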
 
For completeness, we recall along \cite[Chap.~8 and 11]{ambrosio2005gradient}
that $t \mapsto \rho_t$ being the solution of the gradient flow of 
$\mathscr H$ in $\cP_2(\bR^d)$ stands to the existence of a Borel vector field
$w_t : \bR^d \to \bR^d$ such that $w_t$ belongs to the tangent bundle 
$\Tan_{\rho_t} \cP_2(\bR^d)$ for $\mathscr{L}^1$--almost all $t > 0$, 
$\| w_t \|_{L^2(\rho_t)} \in L^p_{\text{loc}}(0,\infty)$, the continuity
equation $\partial_t \rho_t + \nabla \cdot \left( \rho_t w_t \right) = 0$
holds in general in the sense of distributions, and finally, 
$w_t \in - \partial \mathscr H(\rho_t)$ for $\mathscr{L}^1$--almost each 
$t > 0$, where $\partial \mathscr H$ is the Fréchet sub-differential as 
defined in \cite[Chap.~10]{ambrosio2005gradient}, which always exists under
our assumptions. Actually, $w_t = v_t$ as given by Equation~\eqref{eq:vt} 
for almost all $t$. 

We now turn to our discrete algorithm. Consider the iterations:
\begin{equation}
  X^{i,n}_{k+1} = X_k^{i,n} -\frac{\gamma_{k+1}}{n} \sum_{j\in[n]} \nabla U(X^{i,n}_k - X^{j,n}_k) - \gamma_{k+1} \nabla V(X^{i,n}_k) + \sqrt{2\gamma_{k+1}}\xi^{i,n}_k\,,
  \label{eq:gm}
\end{equation}
for each $i\in[n]$. This is a special case of Eq.~(\ref{eq:main}) with $b(x,y)$ given by Eq.~(\ref{eq:bgm}) and
$\zeta_k^{i,n} = 0$ for all $k$.
For simplicity, Assumption~\ref{hyp:algo} will be replaced by the following stronger assumption:
\begin{assumption}
  \label{hyp:algo-sans-zeta}
  We assume that the n-tuple $(X_0^{1,n},\dots, X_0^{n,n})$ is exchangeable and $\sup_n\bE(\|X_0^{1,n}\|^4)<\infty$.
  Moreover,  $(\xi_k^{i,n})_{i\in [n],k\in \bN}$ are i.i.d. centered random variables, with variance $\sigma^2 I_d$,  and such that $\bE(\|\xi_1^{1,1}\|^4)<\infty$.
\end{assumption}
The next proposition implies that Assumption~\ref{hyp:mombis} holds.
{\begin{prop}\label{prop:stabGM}
Let Assumptions~\ref{hyp:gamma},~\ref{hyp:GM} and~\ref{hyp:algo-sans-zeta} be satisfied. Then, Eq.~\eqref{eq:dis1} and~\eqref{eq:dis2} hold.\end{prop}}
Putting Assumptions~\ref{hyp:gamma},~\ref{hyp:GM} and~\ref{hyp:algo-sans-zeta} together, the hypotheses of Th.~\ref{th:BC} are satisfied for $p=2$.
\begin{theorem}
  \label{th:gm}
  Let Assumptions~\ref{hyp:gamma},~\ref{hyp:GM} and~\ref{hyp:algo-sans-zeta} be satisfied. Assume $\sigma>0$.
  Then, the set $\cS$ given by Eq.~\eqref{eq:S2gm} is non empty, and 
 furthermore,
  $$
  \frac {\sum_{l=1}^k \gamma_l W_2( \mu_l^n,\cS)}{\sum_{l=1}^k \gamma_l} \xrightarrow[(k,n)\to (\infty,\infty)]{\bP} 0\,.
  $$
\end{theorem}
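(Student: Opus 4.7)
The plan is to apply Corollary~\ref{cor:S} at $p = 2$, and then to identify the pushforward $(\pi_0)_\#(\BC_2)$ as a subset of the closure of $\cS$ by means of the Lyapunov analysis from Proposition~\ref{prop:gm}. Under Assumption~\ref{hyp:GM} the granular-media field $b(x,y) = -\nabla V(x) - \nabla U(x-y)$ satisfies Assumption~\ref{hyp:b} (by the linear-growth item iii), and the constant diffusion $\sigma(x,y) = \sigma I_d$ is trivially bounded. Assumption~\ref{hyp:algo-sans-zeta} implies Assumption~\ref{hyp:algo} with $\zeta_k^{i,n} \equiv 0$: exchangeability and the prescribed conditional covariance structure follow directly from the i.i.d.\ noise and the exchangeable initial condition, and item (iii) of Assumption~\ref{hyp:algo} is vacuous. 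Proposition~\ref{prop:stabGM} yields $\sup_{k,n}\bE\|X_k^{1,n}\|^4 < \infty$, which by de la Vall\'ee-Poussin delivers Assumption~\ref{hyp:moment}--\eqref{hypenum:pegal2}. Hence all hypotheses of Corollary~\ref{cor:S} at $p=2$ are in force.

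\textbf{Combining Corollary~\ref{cor:S} with Proposition~\ref{prop:gm}.} Corollary~\ref{cor:S} gives
\begin{equation*}
\frac{\sum_{l=1}^k \gamma_l\, W_2\bigl(\mu_l^n,\, (\pi_0)_\#(\BC_2)\bigr)}{\sum_{l=1}^k \gamma_l} \xrightarrow[(k,n)\to(\infty,\infty)]{\bP} 0.
\end{equation*}
Proposition~\ref{prop:gm} states $\BC_2 \subset \overline{\Lambda_0}$, and the definition of $\Lambda_0$ in~\eqref{eq:Lambda} immediately yields $(\pi_0)_\# \Lambda_0 \subset \cS$. Moreover, the map $\rho \mapsto (\pi_0)_\# \rho$ is continuous from $(\cP_2(\cC), \sW_2)$ to $(\cP_2(\bR^d), W_2)$: $\sW_2$-convergence forces $W_2$-convergence of each path marginal $(\pi_{[0,N]})_\# \rho_n$, and since evaluation at $t=0$ is $1$-Lipschitz on paths, the point marginals at $0$ also converge in $W_2$. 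Thus
\begin{equation*}
(\pi_0)_\#(\BC_2) \;\subset\; (\pi_0)_\#\bigl(\overline{\Lambda_0}\bigr) \;\subset\; \overline{(\pi_0)_\#(\Lambda_0)} \;\subset\; \overline{\cS}.
\end{equation*}
For any non-empty subset $A$ of a metric space, $W_2(\mu, A) = W_2(\mu, \overline A)$ (a one-line $\epsilon$ argument via the triangle inequality), hence $W_2(\mu_l^n, \cS) = W_2(\mu_l^n, \overline{\cS}) \leq W_2(\mu_l^n, (\pi_0)_\#(\BC_2))$, and the desired ergodic convergence to $\cS$ follows from the display above.

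\textbf{Non-emptiness and main obstacle.} The conclusion of Corollary~\ref{cor:S} presupposes $(\pi_0)_\#(\BC_2) \neq \emptyset$ (otherwise the distance is vacuously infinite), so $\BC_2 \neq \emptyset$; combined with $\BC_2 \subset \overline{\Lambda_0}$ and the fact that the closure of the empty set is empty, this gives $\Lambda_0 \neq \emptyset$, hence $\cS \neq \emptyset$. The genuine technical inputs of this section are Proposition~\ref{prop:stabGM} (fourth-moment stability of the iterates, where the mean-field interaction causes the usual difficulties) and Proposition~\ref{prop:gm} (the Lyapunov property of $\mathscr H$ together with the inclusion $\BC_2 \subset \overline{\Lambda_0}$); once these are in place, the present theorem reduces to bookkeeping.
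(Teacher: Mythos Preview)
Your proof is correct and follows exactly the paper's approach: the paper's own proof consists of the single line ``Use Cor.~\ref{cor:S} with $p = 2$, together with Prop.~\ref{prop:gm},'' and you have simply spelled out the bookkeeping (verifying the hypotheses of Corollary~\ref{cor:S}, passing from $(\pi_0)_\#(\BC_2)$ to $\overline{\cS}$ via the inclusion $\BC_2\subset\overline{\Lambda_0}$ and continuity of $(\pi_0)_\#$, and extracting non-emptiness of $\cS$). Your treatment of non-emptiness is slightly indirect---the paper establishes $\BC_p\neq\emptyset$ directly inside the proof of Corollary~\ref{cor:S} via Proposition~\ref{prop:BC-2}---but your contrapositive argument is valid.
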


\begin{proof}
  Use Cor.~\ref{cor:S} with $p = 2$, 
  together with Prop.~\ref{prop:gm}.
\end{proof}

We now turn to the pointwise convergence of the measures $\mu^n_k$. 
\begin{theorem}
\label{th-PW-GM} 
Let Assumption~\ref{hyp:GM} hold true with $\beta = 1$, and let
Assumption~\ref{ass-gf} hold true. Assume that the semi-flow $\Psi$ which
existence is stated by Prop.~\ref{GF-GM} has a global attractor 
$A_2$. In the case where $A_2$ is a singleton, it holds that $\cS = A_2$. In 
any case,  
\[
W_2\left( \mu^n_k,A_2 \right)
   \xrightarrow[(k,n)\to (\infty,\infty)]{\bP} 0 . 
\]
\end{theorem}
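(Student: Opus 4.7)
}
The plan is to decouple the statement into two parts: the convergence $W_2(\mu^n_k, A_2) \to 0$, which I intend to reduce to the generic Theorem~\ref{th:pwConvergence}; and the identification $\cS = A_2$ in the singleton case, which I will handle by showing that the fixed points of the semi-flow $\Psi$ are exactly $\cS$.

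For the convergence statement I would apply Theorem~\ref{th:pwConvergence} with $p=2$, so I need to check each of its hypotheses in the current setting. Assumption~\ref{hyp:b} is immediate from Assumption~\ref{hyp:GM}, and Assumption~\ref{hyp:algo-sans-zeta} (inherited from the preceding granular media setup) is stronger than Assumption~\ref{hyp:algo} with $\zeta = 0$. Proposition~\ref{prop:stabGM} provides the uniform fourth moment bound on the iterates, which in particular gives Assumption~\ref{hyp:moment}--\eqref{hypenum:pegal2}, the version valid at $p=2$. Proposition~\ref{GF-GM} under Assumption~\ref{hyp:GM} with $\beta=1$ and Assumption~\ref{ass-gf} supplies the required semi-flow structure in the sense of~\eqref{flot-P(R)}. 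Since the existence of the global attractor $A_2$ is part of the hypothesis, all the conditions of Theorem~\ref{th:pwConvergence} are satisfied and the convergence in probability follows.

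It remains to prove that if $A_2 = \{\mu^*\}$, then $\cS = A_2$. The key step is to identify the fixed-point set $\mathrm{Fix}(\Psi) = \{\mu \in \cP_2(\bR^d) : \Psi_t(\mu) = \mu \; \forall t\}$ with $\cS$. For the inclusion $\cS \subset \mathrm{Fix}(\Psi)$, take $\mu \in \cS$: the vector field $v_t$ in~\eqref{eq:vt} applied to the constant curve $\rho_t \equiv \mu$ vanishes $\mu$-a.e., so the constant curve solves the continuity equation with $w_t = 0 \in -\partial \mathscr H(\mu)$; uniqueness of the gradient flow from Proposition~\ref{GF-GM} then yields $\Psi_t(\mu) = \mu$. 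For the converse inclusion, if $\mu$ is a fixed point, apply the energy dissipation identity~\eqref{eq:continuityBis-limit} of Proposition~\ref{prop:helmholtz} on any interval $[t_1,t_2]$: the left-hand side vanishes, forcing $v_t = 0$ in $L^2(\rho_t)$ for a.e. $t$, and time-independence of $\rho_t$ then gives the defining equation of $\cS$.

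With this identification in hand, the singleton case is immediate. Since $A_2$ is nonempty, compact and invariant, and a singleton, $\mu^*$ must be a fixed point of $\Psi$, hence $\mu^* \in \cS$. Conversely, any $\mu \in \cS$ satisfies $\Psi_t(\mu) = \mu$, so the global attractor property gives $W_2(\mu, A_2) = \lim_{t\to\infty} W_2(\Psi_t(\mu), A_2) = 0$, forcing $\mu = \mu^*$. Therefore $\cS = \{\mu^*\} = A_2$. I expect the only non-routine step to be the careful use of the energy identity~\eqref{eq:continuityBis-limit} to deduce $v = 0$ $\mu$-a.e. from stationarity, and this is transparent once one notes that for a constant curve the integrand in the right-hand side is time-independent.
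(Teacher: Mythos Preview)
Your proposal is correct and follows essentially the same route as the paper: the convergence statement is obtained by verifying the hypotheses of Theorem~\ref{th:pwConvergence} (via Assumption~\ref{hyp:GM}, Assumption~\ref{hyp:algo-sans-zeta}, Prop.~\ref{prop:stabGM}, and Prop.~\ref{GF-GM}), and the identification $\cS = A_2$ in the singleton case is obtained by combining the energy identity~\eqref{eq:continuityBis-limit} with the uniqueness of the gradient flow and the global attractor property. Your intermediate step $\cS = \mathrm{Fix}(\Psi)$ is slightly more explicit than the paper's phrasing, but the underlying arguments coincide.
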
 
{\begin{remark}\label{remark:nonUnicity}
Many authors have
  been interested in the long-time convergence of granular media
  equations under hypotheses ensuring the uniqueness of
  the stationary distribution \cite{carrillo2003kinetic,carrillo2006contractions,cattiaux2008probabilistic,
    bolley2013uniform,gui-liu-wu-zha-aap22}.
  The most obvious case where such a situation arises, is the case where
  the functions $U$ and $V$ are both strongly convex.; Then, there exists 
$\lambda > 0$ such that $W_2(\Psi_t(\nu), \Psi_t(\nu')) \leq e^{-\lambda t}
W_2(\nu, \nu')$ \cite[Th.~11.2.1]{ambrosio2005gradient}. Here,
Th.~\ref{th-PW-GM} applies, with $A_2$ being reduced to the unique stationary measure.
  
On the other hand, the coexistence of multiple stationary measures typically corresponds to
the case of metastable behaviors, where the Helmholtz energy admits several
    critical points. 
  For instance, this situation arises in the case
  of a multi-well potential with low noise intensity~\cite{herrmann2010non,carrillo2020long}.
  Although it can be challenging to characterize such  phase transition phenomena,
  our work supports the
    assertion that a numerical system with \( n \) particles provides
    an estimate, in the sense that the $n$-system inherits the same
    asymptotic behavior as its mean-field approximation.  
\end{remark}
}

\section{Proofs of Sec.~\ref{sec:main}}
{
\subsection{Proof of Prop.~\ref{prop:tight}} \label{sec:tight}

In this paragraph, consider $1\le p\le 2$.
We recall that, when $p<2$, Assumption~\ref{hyp:moment}
holds and Assumption~\ref{hyp:mombis} holds when $p=2$.
First, we need the following lemma.
\begin{lemma}\label{lem:moment}

Let Assumption~\ref{hyp:moment} with Assumptions~\ref{hyp:b},~\ref{hyp:gamma}, and~\ref{hyp:algo} hold true, it holds that \(\sup_{k,n} \bE\|X_k^{1,n}\|^2 < \infty\). Furthermore, when Assumption~\ref{hyp:mombis} holds, we have \(\sup_{k,n} \bE\|X_k^{1,n}\|^4 < \infty\).

\end{lemma}
\begin{proof}
    First, we will show the first point of the lemma.
We recall the iteration
\[
    X_{k+1}^{i,n} = X_k^{i,n} + \gamma_{k+1}b(X_k^{i,n},\mu_k^n) + \gamma_k\zeta_k^{i,n} + \sqrt{2\gamma_{k+1}}\xi^{i,n}_{k+1}.
\]
In this proof, we denote by \( C > 0 \) a generic constant that is sufficiently large, and by \( c > 0 \) a generic constant that is sufficiently small. We take \( k \) large enough such that  
\( \gamma_k + \gamma_k^2 \leq C \gamma_k \) and \( -c\gamma_k + \gamma_k^2 \leq -c\gamma_k \). 
For simplicity, we remove the superscript \({}^n\) from \(X_k^{i,n}\), $\mu_k^n$, \(\zeta_k^{i,n}\), and \(\xi_k^{i,n}\). Moreover, we remove the subscript \({}_{k+1}\) from \(\gamma_{k+1}\).

By Assumption~\ref{hyp:b}, for $i\in [n]$, we obtain
\begin{equation}\label{eq:etape0}
\begin{split}
   & \norm{X^{i}_{k+1}}^2-\norm{X^{i}_{k}}^2 \\&= \gamma \ps{X^{i}_k, b(X^i_k, \mu_k)} +  \sqrt{2\gamma}\ps{X^i_k, \xi^i_{k+1}} +\gamma\ps{X^{i}_{k}, \zeta^i_{k+1}} \\ &+ \norm{ \gamma b(X^i_k,\mu_k) + \sqrt{2\gamma}\xi^i_{k+1} + \gamma\zeta^i_{k+1}  }^2 \\
    &\le \gamma \ps{X^{i}_k, b(X^i_k, \mu_k)} +  \sqrt{2\gamma}\ps{X^i_k, \xi^i_{k+1}} +\gamma\ps{X^{i}_{k}, \zeta^i_{k+1}}+ \\
    & 6\gamma^2 \norm{X^{i}_{k}}^2 + 6\gamma^2 \int \norm{x}^2 \dr \mu_k(x) + 6\gamma \norm{\xi^i_{k+1}}^2 + 3\gamma^2\norm{\zeta_{k+1}^i}^2.
\end{split}
\end{equation}
Summing the latter with respect to \( i \), with Eq.~\eqref{eq:dis1}, we obtain
\begin{equation}\label{eq:etape1}
\begin{split}
   &\frac 1C\p{\int \norm{x}^2\dr\mu_{k+1}(x)-\int \norm{x}^2\dr\mu_k(x)}\\& \le \gamma \int \ps{x, b(x, \mu_k)} \dr\mu_k(x) +  \frac{\sqrt{\gamma}}n\sum_{i\in[n]} \ps{X^i_k, \xi^i_{k+1}}
   +\frac\gamma n \sum_{i\in[n]} \ps{X_k^i, \zeta^i_{k+1}} \\
    &+ \gamma^2 \int \norm{x}^2 \dr \mu_k(x) +  \frac 1n\sum_{i\in[n]}\p{\gamma\norm{\xi^i_{k+1}}^2 + \gamma^2\norm{\zeta_{k+1}^i}}\\
    &\le -c\gamma \int \norm{x}^2\dr\mu_k(x) + \frac{\sqrt{\gamma}}n\sum_{i\in[n]} \ps{X^i_k, \xi^i_{k+1}}
   +\frac\gamma n \sum_{i\in[n]} \ps{X_k^i, \zeta^i_{k+1}} \\
    &+ \gamma^2 \int \norm{x}^2 \dr \mu_k(x) +  \frac 1n\sum_{i\in[n]}\p{\gamma\norm{\xi^i_{k+1}}^2 + \gamma^2\norm{\zeta_{k+1}^i}^2} +C\gamma \,. \\
\end{split}
\end{equation}
Taking the expectation, by the exchangeability given by Assumption~\ref{hyp:algo}, the assumption on \((\zeta^i_k)_{i,k}\), and Assumption~\ref{hyp:algo}, we obtain
\begin{equation*}
     \bE{\norm{X^{1}_{k+1}}^2}-\bE{\norm{X^{1}_{k}}^2}\le -c\gamma \bE\norm{X^1_k}^2 + C\gamma\,. \\
\end{equation*}
As a consequence, we obtain the first point of the lemma.

Now, we proceed to demonstrate the second point of the lemma. But fist we claim that
\begin{equation}\label{eq:boundeq}
    \sup_{k\in\bN} \bE\p{\int\norm{x}^2\dr\mu_k(x)}^2 <\infty.
\end{equation}
Indeed, by raising to the square Eq.~\eqref{eq:etape1} and taking the expectation, we obtain
\[
\begin{split}
   &\bE\p{\int \norm{x}^2\dr\mu_{k+1}(x)}^2-\bE\p{\int \norm{x}^2\dr\mu_k(x)}^2\\
    &\le -c\gamma \bE\p{\int \norm{x}^2\dr\mu_k(x)}^2  
    + C\gamma\bE\p{\int \norm{x}^2 \dr \mu_k(x)}+C\gamma \, . 
\end{split}
\]
Now, we will obtain the second point of the lemma. By raising to the square Eq.~\eqref{eq:etape0}, we obtain
\[
\begin{split}
    & \frac 1C (\norm{X^{i}_{k+1}}^4-\norm{X^{i}_{k}}^4) \\& \le \gamma \ps{X^{i}_k, b(X^i_k, \mu_k)}\norm{X^{i}_{k}}^2 +  \sqrt{\gamma}\ps{X^i_k, \xi^i_{k+1}} \norm{X^{i}_{k}}^2+\gamma\ps{X^{i}_{k}, \zeta^i_{k+1}}\norm{X^{i}_{k}}^2+ \\
    & \gamma^2 \norm{X^{i}_{k}}^4 + \gamma^2 \int \norm{x}^2 \dr \mu_k(x) \norm{X^{i}_{k}}^2 + \gamma \norm{\xi^i_{k+1}}^2\norm{X^{i}_{k}}^2 + \gamma^2\norm{\zeta_{k+1}^i}^2\norm{X^{i}_{k}}^2\\
    &\gamma^4 \norm{X^{i}_{k}}^4 + \gamma^4 \p{\int \norm{x}^2 \dr \mu_k(x) }^2 + \gamma^2 \norm{\xi^i_{k+1}}^4 + \gamma^4\norm{\zeta_{k+1}^i}^4\,.
\end{split}
\]
Summing over $i\in[n]$, we obtain
\[
\begin{split}
    & \frac 1C \p{\int \norm{x}^4\dr\mu_{k+1}(x)-\int \norm{x}^4\dr\mu_{k}(x)} \\& \le \gamma \int \ps{x, b(x, \mu_k)}\norm{x}^2\dr\mu_k(x) +  \frac {\sqrt{\gamma}}{n} \sum_{i\in[n]} \ps{X^i_k, \xi^i_{k+1}} \norm{X^i_k}^2 +\\ &\frac \gamma n\sum_{i\in[n]}\ps{X^i_k, \zeta^i_{k+1}}\norm{X^i_k}^2+ 
     \gamma^2 \int \norm{x}^4\dr\mu_k(x) + \gamma^2 \p{\int \norm{x}^2 \dr \mu_k(x)}^2  +\\& \frac \gamma n\sum_{i\in[n]} \norm{\xi^i_{k+1}}^2 \norm{X^i_n}^2+ \frac{\gamma^2}n\sum_{i\in[n]} \norm{\zeta_{k+1}^i}^2\norm{X^i_k}^2+\gamma^4 \int \norm{x}^4\dr \mu_k(x) + \\
    &\gamma^4 \p{\int \norm{x}^2 \dr \mu_k(x) }^2 + \frac{\gamma^2}n \sum_{i\in[n]} \norm{\xi^i_{k+1}}^4 + \frac{\gamma^4}n \sum_{i\in[n]}\norm{\zeta_{k+1}^i}^4\,.
\end{split}
\]
Taking the expectation, by Eq.~\eqref{eq:dis2}, and by the assumption on $(\zeta_k^i)_{k,i}$, we obtain
\[
\begin{split}
      & \frac 1C \p{\bE \norm{X_{k+1}^1}^4- \norm{X^1_k}^4} \\& 
      \le -c \gamma\bE\norm{X_k^1}^4 + \gamma\bE \p{\int \norm{x}^2 \dr\mu_k(x) \p{\int \norm{x}^4\dr\mu_k(x)}^{1/2}}
      \\ &  
     +\gamma^2 \bE \norm{X_k^1}^4 + \gamma^2 \bE\p{\int \norm{x}^2 \dr \mu_k(x)}^2 + 
      \gamma\bE\norm{X^1_n}^2 \\&+ \gamma \bE\p{\int \norm{x}^4\dr\mu_k(x)}^{1/2} +{\gamma} 
       \,.  
\end{split}
\]
Cauchy-Schwartz inequality yields
\[
\begin{split}
      & \frac 1C \p{\bE \norm{X_{k+1}^1}^4- \norm{X^1_k}^4} \\&
      \le  -c \gamma\bE\norm{X_k^1}^4 + \gamma\p{\bE \p{\int \norm{x}^2 \dr\mu_k(x)}^2}^{1/2}\p{\bE\norm{X_k^1}^4 }^{1/2}  \\
      & +\gamma^2 \bE \norm{X_k^1}^4 + \gamma^2 \bE\p{\int \norm{x}^2 \dr \mu_k(x)}^2 + 
      \gamma\p{\bE\norm{X^1_n}^4}^{1/2} + {\gamma}
       \,.  
\end{split}
\]
Finally, by Eq.~\eqref{eq:boundeq}, we obtain
\[
\begin{split}
      & \frac 1C \p{\bE \norm{X_{k+1}^1}^4- \norm{X^1_k}^4} \\&
      \le  -c \gamma\bE\norm{X_k^1}^4  +\gamma^2 \bE \norm{X_k^1}^4 + 
      \gamma\p{\bE\norm{X^1_n}^4}^{1/2} + {\gamma}
       \,,
\end{split}
\]
which concludes the proof. 
\end{proof}

Note that $(\Phi_t(m^n))$ belongs to $\cP_p(\cC)$.

In the light of
Lem.~\ref{lem:meleard} and Prop~\ref{prop:space}, we should establish two points: first, the 
weak$\star$-relatively compactness of the family of intensities 
$\{\bI(\Phi_t(m^n))\}_{t,n}$; second, a uniform integrability condition
of the $p$th order moments of the measures $\bI(\Phi_t(m^n)(x))$. These results are respectively
stated in Lem.~\ref{lem:Iui} and \ref{lem:TMui} below.
\begin{lemma}
  \label{lem:Iui}
  We posit the assumptions of Prop.~\ref{prop:tight}.
   The family of intensities 
 $\{\bI(\Phi_t(m^n))\}_{t,n}$ is weak$\star$-relatively compact
 in $\cP(\cC)$.
\end{lemma}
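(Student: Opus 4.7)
My plan is to exploit the exchangeability of the particles in order to identify the intensity of $\Phi_t(m^n)$ with the law of a single tagged trajectory, and then to verify the classical Kolmogorov--Prokhorov tightness criterion on $\cC$.

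First, by Assumption~\ref{hyp:algo}--\eqref{hypenum: exchangeable}, the exchangeability of the initial conditions and of the noise sequences propagates through the dynamics~\eqref{eq:main}, so that the interpolated trajectories $\bar X^{1,n},\ldots,\bar X^{n,n}$ are exchangeable as $\cC$-valued random variables. For any Borel set $A\subset\cC$, this gives
\[
\bI(\Phi_t(m^n))(A) = \bE\!\left[\frac{1}{n}\sum_{i=1}^n \un_{\Theta_t\bar X^{i,n}\in A}\right] = \bP(\Theta_t\bar X^{1,n}\in A),
\]
so $\bI(\Phi_t(m^n))$ is the law of $\Theta_t\bar X^{1,n}$. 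Since $\cC$ is Polish, Prokhorov's theorem reduces the lemma to tightness of the family $\{\mathrm{law}(\Theta_t\bar X^{1,n}):t\ge 0,n\ge 1\}$ in $\cC$; because $\cC$ carries the topology of uniform convergence on compacts, this in turn reduces to tightness in $C([0,T],\bR^d)$ for every $T>0$. I would then apply the standard criterion: (a) tightness of the value at the origin, and (b) a uniform modulus of continuity. Point (a) is immediate from Markov's inequality, since the piecewise linear interpolation~\eqref{interp} together with Assumption~\ref{hyp:moment}--\eqref{hypenum:pinf2} gives $\sup_{t\ge 0,n\ge 1}\bE\|\bar X^{1,n}_t\|^2<\infty$.

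For (b), I would split $\bar X^{1,n}_{s_2}-\bar X^{1,n}_{s_1}$, for $s_1<s_2$ in $[t,t+T]$, into three contributions obtained by summing~\eqref{eq:main} over the indices $k$ with $\tau_k\in[s_1,s_2]$ (together with two boundary corrections of order $\gamma_{k+1}$): a drift term $D$, a martingale term $M$, and a bias term $B$. The linear growth of $b$ from Assumption~\ref{hyp:b}, combined with Jensen's inequality, exchangeability, and the uniform bound $\sup_{k,n}\bE\|X_k^{1,n}\|^2<\infty$, yields $\bE\|D\|^2\le C(s_2-s_1)^2$. The martingale-increment property in Assumption~\ref{hyp:algo}--\eqref{hypenum: va} with the bounded $\sigma$ gives $\bE\|M\|^2\le C(s_2-s_1)$ via orthogonality of martingale increments. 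The $L^1$-bound on $\zeta$ in Assumption~\ref{hyp:algo}--\eqref{hypenum: eta} yields $\bE\|B\|\le C(s_2-s_1)$. A Doob-type maximal inequality for $M$ converts the pointwise estimate on $\|M\|$ into a supremum estimate over the relevant interval, and combining the three supremum estimates through Markov's inequality gives the required
\[
\lim_{\delta\downarrow 0}\ \sup_{t\ge 0,\, n\ge 1} \bP\!\left(\sup_{\substack{s_1,s_2\in[t,t+T]\\ |s_2-s_1|\le\delta}}\|\bar X^{1,n}_{s_2}-\bar X^{1,n}_{s_1}\|>\varepsilon\right)=0 .
\]

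The \emph{main obstacle} is producing this modulus of continuity \emph{uniformly in the time shift} $t$, rather than only for fixed $t$: this uniformity is precisely what the uniform-in-$k$ moment bounds of Assumption~\ref{hyp:moment} are designed to supply, and it is what will allow tightness to be useful in the doubly asymptotic regime $(t,n)\to(\infty,\infty)$. The decomposition into $D+M+B$ also has to be carried out with some care, because $s_1$ and $s_2$ generally lie inside interpolation intervals rather than at grid points $\tau_k$; the resulting boundary terms are of order $\gamma_{k+1}\to 0$ and are absorbed by the estimates above.
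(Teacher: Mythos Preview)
Your overall strategy matches the paper's: identify the intensity with the law of the tagged particle via exchangeability, then verify the Kolmogorov--Prokhorov criterion on $\cC$ by decomposing the increments into drift, martingale, and bias parts. The gap is in the \emph{orders} of the moment bounds you invoke for the martingale and the bias pieces; with those orders the modulus-of-continuity step does not close.

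For the martingale term, the estimate $\bE\|M\|^2\le C(s_2-s_1)$ from orthogonality is correct but insufficient: it is the Brownian scaling, and Kolmogorov--Chentsov needs $\bE\|X_t-X_s\|^\alpha\le C|t-s|^{1+\beta}$ with $\beta>0$. A Doob maximal inequality only upgrades this to a supremum bound over a \emph{single} interval of length $\delta$; when you union-bound over the $T/\delta$ subintervals of $[t,t+T]$ the $\delta$'s cancel and nothing tends to zero. The fix, and this is precisely why Assumption~\ref{hyp:algo}--\eqref{hypenum: va} requires $\sup_{k,n}\bE\|\xi_k^{1,n}\|^4<\infty$, is to apply Burkholder's inequality to get $\bE\|M\|^4\le C(s_2-s_1)^2$, i.e.\ $\alpha=4$, $\beta=1$; the paper does exactly this.

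The bias term has the same problem. Assumption~\ref{hyp:algo}--\eqref{hypenum: eta} gives only a finite (not uniform in $k,n$) first moment and a vanishing conditional mean; the uniform control you need is $\sup_{k,n}\bE\|\zeta_k^{1,n}\|^2<\infty$ from Assumption~\ref{hyp:moment}--\eqref{hypenum:pinf2}. With it and Jensen, $\|B\|^2\le (s_2-s_1)\sum_k\gamma_{k+1}\|\zeta_{k+1}^{1,n}\|^2$ yields $\bE\|B\|^2\le C(s_2-s_1)^2$, which again has the right exponent. Your bound $\bE\|B\|\le C(s_2-s_1)$ fails the union-bound test for the same reason as the martingale part.
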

\begin{proof}
Let us establish the first point. 
For every bounded continuous function $\phi : \cC \to \bR$, we have 
\[
\begin{split}
  \bI(\Phi_t(m^n)))(\phi) := 
  \esp{\int \phi(x) \dr \p{\Phi_t(m^n) (x)}} &= 
 \frac 1n \sum_{i\in[n]} \esp{\phi(\bar X^{i,n}_{t+\cdot})} \\& = \esp{\phi(\bar X^{1,n}_{t+\cdot})} \,,
\end{split}
\]
where we used the exchangeability stated 
in~Assumption~\ref{hyp:algo}-\eqref{hypenum: exchangeable}. Let us define the
measure $\hat \bI^n_t \in \mathcal{P}(\mathbb{R}^d)$ as 
\[
 \hat \bI^n_t(\phi) := \mathbb{E}\left[\psi(\bar X_t^{1,n})\right] ,
\] 
for each measurable function $\psi : \bR^d \to \bR_+$.
According to Th. 7.3 in~\cite{billingsley2013convergence}, the weak$\star$-relative compactness of the sequence \( (\bI^n_t)_{t,n} \) in $\cP(\cC)$ is guaranteed
if and only if the weak$\star$-relative compactness of \( (\hat \bI^n_t)_{t,n} \) in $\cP(\bR^d)$ is ensured, and if the following equicontinuity condition 
\begin{equation}\label{eq:bil}
  \lim_{\delta\to 0} \limsup_{t,n} \mathbb{P}\left(w^{T}_{\bar X^{1,n}_{t+\cdot}}(\delta) \geq \varepsilon\right) = 0 
\end{equation}
is met for every $\varepsilon,T > 0$, where $w^{T}_x(\delta)$ is the modulus 
of continuity of a function $x$ on the interval $[0,T]$.
The weak$\star$-relative compactness of $(\hat \bI^n_t)_{t,n}$ in $\cP(\bR^d)$, follows directly from Lem.~\ref{lem:moment}.
Using the notation $k_t  := \inf \{ k\,:\, \sum_{i=1}^{k}\gamma_{i} \ge t  \}$,
and using the definition in Eq.~\eqref{eq:main}, we obtain the decomposition:
\begin{align}\label{eq:decomp}
  &\bar X_t^{1,n} -\bar X_s^{1,n} =  P_{s,t}^n+ N_{s,t}^n+  U_{s,t}^n \,,\\
  &P_{s,t}^n :=\left . \sum_{k = k_s}^{k_t-2} {\gamma_{k+1}}b(X_k^{1,n},\mu_k^n)  \right. \nonumber \\ &\phantom{P_{s,t}^n :=} + \left. \p{\tau_{k_s} - s}b(X_{{k_s-1}}^{1,n},\mu_{{k_s-1}}^{n}) + \p{\tau_{k_t} - t}b(X_{k_t-1}^{1,n},\mu_{k_t-1}^{n})\right .\nonumber\\
  &N_{s,t}^n := \sum_{k = k_s}^{k_t-2}\sqrt{\gamma_{k+1}}\xi^{1,n}_{k+1}   +  \frac{\tau_{k_s} - s}{\gamma_{k_s}}\sqrt{\gamma_{k_s}}\xi^{1,n}_{k_s} + \frac{\tau_{k_t} - t}{\gamma_{k_t}}\sqrt{\gamma_{k_t}}\xi^{1,n}_{k_t} \nonumber\\
&U_{s,t}^n :=
  \sum_{k = k_s}^{k_t-2} {\gamma_{k+1}} \zeta_{k+1}^{1,n} 
  + \p{\tau_{k_s} - s} \zeta_{k_s}^{1,n} + 
   \p{\tau_{k_t} - t} \zeta_{k_t}^{1,n} \,.\nonumber
\end{align}
Let the sequence \( (\tilde{\gamma}_k) \) be defined by:
$\tilde \gamma_{k_s} := \tau_{k_s} -s$,
$\tilde \gamma_{k_t} := \tau_{k_t} -t$
and $\tilde \gamma_{k} := {\gamma}_{k}$ for all $k \neq k_{t_s}, k_{t_t}$.
Note that:
\begin{equation}\label{eq:tildeGamma}
    \sum_{k=k_s-1}^{k_t-1} \tilde \gamma_{k+1} = t-s \,.
\end{equation}
Moreover, we have: \begin{equation}\label{eq:bTau}
  \frac{\tau_{k_s} - s}{\gamma_{k_s}}\sqrt{\gamma_{k_s}}\le \sqrt{\tilde \gamma_{k_s}}\,,\quad \text{and} \quad \frac{\tau_{k_t} - t}{\gamma_{k_t}}\sqrt{\gamma_{k_t}}\le \sqrt{\tilde \gamma_{k_t}}\,.
\end{equation}
The term \(N_{s,t}^n\) is expressed as a sum of martingale increments, with respect to the filtration $\mathcal{F}_{k}^n$.
Let \(\| \cdot \|_\alpha\) denote the \(\alpha\)-norm in \(\mathbb{R}^d\).
We apply Burkholder's inequality stated in ~\cite[Th. 1.1]{burkholder1972integral} to the components of the vector \(N_{s,t}^n\) in $\bR^d$. As Eq.~\eqref{eq:tildeGamma} and~\eqref{eq:bTau} hold:
\[
  \bE\p{\norm {N_{s,t}^n}_4^{4}} \le C(t-s) \esp{ { \sum_{k = k_s-1}^{k_t-1}{\tilde \gamma_{k+1}}\norm{\xi^{1,n}_{k+1}}^4_4}}\,,
\] 
where $C$ is a constant independent $s,t$ and $n$. 
As Assumption~\ref{hyp:algo}-\eqref{hypenum: va} holds, there exists a constant $C>0$ independent of $s,t$, and $n$, such that 
\begin{equation}\label{eq:N}
 \sup_{n\in\bN} \bE\p{\norm {N_{s,t}^n}^{4}} \le C(t-s)^{2}\,.
\end{equation}
Furthermore, using Jensen's inequality along with Eq.~\eqref{eq:tildeGamma}, 
we obtain 
\begin{equation*}
  {\norm {P_{s,t}^n}^2} \le {(t-s)}
  \sum_{k = k_s-1}^{k_t-1}\tilde \gamma_{k+1}{ {\norm{b(X^{1,n}_k,\mu_k^{n})}^2}}\,.
\end{equation*}
Using Assumptions~\ref{hyp:b} and Lem.~\ref{lem:moment}, there exists a constant 
$C$, independent of $s,t,n$, such that 
\begin{equation}\label{eq:P}
  \sup_{n\in\bN} \bE\p{\norm {P_{s,t}^n}^{2}} \le C (t-s)^2\,.
\end{equation}
Also, by Jensen's inequality, we have 
\begin{equation*}
  {\norm {U_{s,t}^n}^2} \le {(t-s)}
  \sum_{k = k_s-1}^{k_t-1}
  \tilde \gamma_{k+1}\norm{\zeta_{k+1}^{1,n}}^2\,.
\end{equation*}
Since, by Assumption~\ref{hyp:algo}, we have 
$\sup_{k,n} \bE{[\|{\zeta_k^{1,n}}\|^2]} <\infty$, 
there exists a constant $C$ independent of $n,s$, and $t$, such that:
\begin{equation}\label{eq:U}
  \sup_{n\in\bN} \bE\p{\norm {U_{s,t}^n}^{2}} \le C (t-s)^2\,.
\end{equation}
Combining Equations \(\eqref{eq:P}\), \eqref{eq:N} and \(\eqref{eq:U}\), we have shown:
\begin{equation}
 \label{eq:Kolmogorov}
  \sup_{n\in \bN} \esp{\norm{P^n_{s,t}}^{2} +\norm{N_{s,t}^n}^{4}+\norm{U_{s,t}^n}^{2}} \le C {\p{t-s}^{2}}\,,
\end{equation}
where $0\le s<t<\infty$, and $C$ is a positive constant, independent of $s,t,n$.
Using \cite[Th. 2.8]{leoni2023first} and Markov's inequality, Eq.~(\ref{eq:bil}) hold.
\end{proof}

\begin{lemma}
  \label{lem:TMui}
  We posit the assumptions of Prop.~\ref{prop:tight}.
For every $T>0$,  \[
\lim_{a\to\infty}\sup_{t\in\bR_+,\, n\in \bN^*} \esp{\int \sup_{s\in [0,T]} \norm{x_s}^p
   \indicatrice_{\sup_{s\in [0,T]} \norm{x_s} \ge a}\dr \Phi_t(m^n)(x) }=0.
\] 
\end{lemma}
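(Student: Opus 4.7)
The plan is, by the exchangeability of Assumption~\ref{hyp:algo}--\eqref{hypenum: exchangeable}, to reduce the statement to the uniform integrability of the single-particle family $\{\sup_{s\in[0,T]}\|\bar X^{1,n}_{t+s}\|^p\}_{t\ge 0,\,n\in\bN^*}$, using
\[
\esp{\int \sup_{s\in[0,T]}\|x_s\|^p\,\indicatrice_{\sup_{s\in[0,T]}\|x_s\|\ge a}\,d\Phi_t(m^n)(x)}=\esp{\sup_{s\in[0,T]}\|\bar X^{1,n}_{t+s}\|^p\,\indicatrice_{\sup_{s\in[0,T]}\|\bar X^{1,n}_{t+s}\|\ge a}}.
\]
I then re-use the decomposition $\bar X^{1,n}_{t+s}-\bar X^{1,n}_t=P^n_{t,t+s}+N^n_{t,t+s}+U^n_{t,t+s}$ from Eq.~\eqref{eq:decomp}. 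Since uniform integrability is preserved under finite sums and under pointwise domination by a UI variable, it suffices to establish UI of each of the four families $\|\bar X^{1,n}_t\|^p$, $\sup_s\|P^n_{t,t+s}\|^p$, $\sup_s\|N^n_{t,t+s}\|^p$, $\sup_s\|U^n_{t,t+s}\|^p$ as $(t,n)$ varies.

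The martingale term is handled by Doob's maximal inequality applied to the discrete $(\cF_k^n)$-martingale $k\mapsto\sum_j\sqrt{\gamma_{j+1}}\xi^{1,n}_{j+1}$ (the piecewise linear interpolation attains its extremal values at the vertices), together with the Burkholder-type bound already obtained in Eq.~\eqref{eq:N} and Assumption~\ref{hyp:algo}--\eqref{hypenum: va}. This yields $\esp{\sup_{s\in[0,T]}\|N^n_{t,t+s}\|^4}\le C_T$ uniformly in $(t,n)$, so $\sup_s\|N^n_{t,t+s}\|^p$ is bounded in $L^{4/p}$ with $4/p>1$, hence UI. For the initial term, the interpolation formula~\eqref{interp} gives $\|\bar X^{1,n}_t\|^p\le 2^{p-1}(\|X^{1,n}_{k_t-1}\|^p+\|X^{1,n}_{k_t}\|^p)$, so its UI reduces to that of $\{\|X^{1,n}_k\|^p\}_{k,n}$, which is granted by the $L^2$-bound of Assumption~\ref{hyp:moment}--\eqref{hypenum:pinf2} when $p<2$ and directly by Assumption~\ref{hyp:moment}--\eqref{hypenum:pegal2} when $p=2$.

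For the drift and residual terms, I would apply Cauchy--Schwarz against the probability measure $\tilde\gamma_{k+1}/\sum_{k'}\tilde\gamma_{k'+1}$, together with the linear growth of $b$ (Assumption~\ref{hyp:b}), to obtain bounds of the form
\[
\sup_{s\in[0,T]}\|P^n_{t,t+s}\|^2\le C_T\Bigl[1+\sum_k\tilde\gamma_{k+1}\Bigl(\|X^{1,n}_k\|^2+\tfrac{1}{n}\sum_j\|X^{j,n}_k\|^2\Bigr)\Bigr],\quad \sup_{s\in[0,T]}\|U^n_{t,t+s}\|^2\le C_T\sum_k\tilde\gamma_{k+1}\tfrac{1}{n}\sum_i\|\zeta^{i,n}_{k+1}\|^2.
\]
Each right-hand side is, up to an additive constant, a probability-weighted average of the families $\{\|X^{i,n}_k\|^2\}$ and $\{\|\zeta^{i,n}_k\|^2\}$. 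Under Assumption~\ref{hyp:moment}--\eqref{hypenum:pinf2} (case $p<2$), taking expectation produces an $L^1$-bound, hence $L^{2/p}$-boundedness of the $p$-th power with $2/p>1$, hence UI.

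The delicate case is $p=2$ under Assumption~\ref{hyp:moment}--\eqref{hypenum:pegal2}, where mere $L^1$-boundedness of a square no longer implies uniform integrability. To overcome this obstacle, I would invoke the following elementary fact: if $\{Z_\alpha\}$ is uniformly integrable and $\mu_\beta$ is, for each $\beta$, a probability measure on the index set, then the family $\{\int Z_\alpha\,d\mu_\beta(\alpha)\}_\beta$ is UI as well. The proof is short: Fubini gives $\esp{Y_\beta\,\indicatrice_{\{Y_\beta\ge a\}}}\le \sup_\alpha \esp{Z_\alpha\,\indicatrice_{\{Y_\beta\ge a\}}}$, Markov's inequality together with the uniform $L^1$-bound on $Y_\beta$ makes $\bP(Y_\beta\ge a)$ arbitrarily small uniformly in $\beta$ for $a$ large, and UI of $\{Z_\alpha\}$ concludes. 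Applying this lemma with $Z=\|X^{i,n}_k\|^2$ and $Z=\|\zeta^{i,n}_k\|^2$ respectively transfers UI to $\sup_s\|P^n_{t,t+s}\|^2$ and $\sup_s\|U^n_{t,t+s}\|^2$. Summing the four UI contributions then yields the claim.
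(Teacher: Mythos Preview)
Your proof is correct and follows essentially the same skeleton as the paper's: reduction to a single particle via exchangeability, the four-term decomposition $\bar X^{1,n}_{t+s}=\bar X^{1,n}_t+P+N+U$, and separate UI verification for each piece. The only methodological difference is in the handling of the $P$ and $U$ terms at $p=2$: the paper invokes de la Vall\'ee Poussin (pick a convex superlinear $F$ with $\sup_{k,n}\bE F(\|X^{1,n}_k\|^p)<\infty$, then push $F$ through the convex combination via Jensen), which treats all $p\in[1,2]$ uniformly, whereas you use the direct ``averaging preserves UI'' lemma; both are standard and valid.
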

\begin{proof}
  By the exchangeability stated in Assumption~\ref{hyp:algo}-\eqref{hypenum: exchangeable}, we obtain:
\begin{multline*}
     \esp{\int \sup_{u\in [0,T]} \norm{x_u}^p \indicatrice_{\underset{u\in [0,T]}{\sup} \norm{x_u} > a}\dr \Phi_t(m^n)(x) }=\\ \esp{ \sup_{u\in [0,T]} \norm{\bar X^{1,n}_{t +u}}^p  \indicatrice_{\underset{u\in [0,T]}{\sup} \norm{\bar X_{t+u}^{1,n}} > a} }\,,
\end{multline*}
for every $k,t,n$.
Recalling the decomposition introduced in~Eq.~\eqref{eq:decomp}, for every $u\in [0,T]$:
\[
  \norm{\bar X^{1,n}_{t+u}}^p \le 4^{p-1}\p{ \norm{\bar X^{1,n}_{t}}^p + \norm{N^n_{t,t+u}}^p + \norm{P^n_{t,t+u}}^p+  \norm{U^n_{t,t+u}}^p}\,.
\]
Hence,
  \begin{multline*}
        \norm{\bar X^{1,n}_{t+u}}^p\indicatrice_{\underset{u\in [0,T]}{\sup} \norm{\bar X^{1,n}_{t+u}} > a} \le 4^{p} \left( \norm{\bar X^{1,n}_{t}}^p\indicatrice_{ \norm{\bar X^{1,n}_{t}}> \tfrac a4} \right . \\ \left . +\norm{N^n_{t,t+u}}^p \indicatrice_{ \underset{u\in [0,T]}{\sup}\norm{N^n_{t,t+u}}> \tfrac a4} \right . \\
   \left .+     \norm{P^n_{t,t+u}}^p \indicatrice_{ \underset{u\in [0,T]}{\sup}\norm{P^n_{t,t+u}}> \tfrac a 4}+  \norm{U^n_{t,t+u}}^p \indicatrice_{ \underset{u\in [0,T]}{\sup}\norm{U^n_{t,t+u}}> \tfrac a 4}         \right)\,.
  \end{multline*}
  
Consequently, for each $T>0$, it suffices to obtain the uniform integrability of the four collections of random variables: 
$ ( \|\bar X_t^{1,n}\|^p\,:\, t\in\bR_+,n\in\bN^*)$, 
$( \sup_{u\in [0,T]}\|N^n_{t,t+u}\|^p \,:\, t\in\bR_+,n\in\bN^*)$, 
$\!({ \sup_{u\in [0,T]}\|{P^n_{t,t+u}}\|^p \,:\,t\in\bR_+,n\in\bN^*})$ and 
$({ \sup_{u\in [0,T]}\|{U^n_{t,t+u}}\|^p\,:\, t\in\bR_+,n\in\bN^*})$.

 $(\|\bar X_t^{1,n} \|^p : k \in \mathbb{R}_+, n \in \mathbb{N}^*)$ is uniformly integrable by the first point of Lem.~\ref{lem:moment} when $p<2$, and by the second point of  Lem.~\ref{lem:moment} when $p=2$.
As obtained in Eq.~\eqref{eq:N}, Burkholder inequality stated in~\cite[Th 1.1]{burkholder1972integral} yields:
\[
  \esp{\sup_{u\in [0,T]}\norm{N^n_{t,t+u}}^4 } \le C T^2\,,  
\]
where $C$ is a constant independent of $t,n$, and $T$. Hence, since $p<4$, we obtain the uniform integrability of $\{ \sup_{u\in [0,T]}\|{N^n_{t,t+u}}\|^p \,:\, t\in\bR_+,n\in\bN^*\}$.
As obtained in Eq.~\eqref{eq:P} and Eq.~\eqref{eq:U}, we derive:
\[
  {\sup_{u\in [0,T]}{\norm{P^n_{t,t+u}}^p}} \le {C T^{p-1}}\sum_{k=k_t-1}^{k_{t+T}-1} \tilde \gamma_{k+1} \norm{b(X^{1,n}_k,\mu_k^n)}^p \,,  
\]
and
\[
  {\sup_{u\in [0,T]}{\norm{U^n_{t,t+u}}^2}} \le {C T} \sum_{k=k_t-1}^{k_{t+T}-1} \tilde \gamma_{k+1}{\norm{\zeta^{1,n}_k}^2} \,,  
\]
where $C$ remains a constant independent of $n$ and $t$. 
Using the first point of Lem.~\ref{lem:moment} when $p<2$, and the second point of Lem.~\ref{lem:moment} when $p=2$, by de la Vallée Poussin theorem, there exists a non-decreasing, convex, and non-negative function $F:\mathbb{R}^*_+\rightarrow \mathbb{R}$ such that
\[
  \lim_{h\to\infty} \frac {F(h)}h = \infty,\,\text{and } \sup_{k\in\bN,n\in\bN^*}\esp{F\p{\norm{b(X^{1,n}_{k},\mu_k^n)}^p}} <\infty .
\]
Hence, by Jensen's inequality, 
\[
  \esp{F\p{\sup_{u\in [0,T]}{\norm{P^n_{t,t+u}}^p}}} \le \frac 1T \sum_{k=k_t-1}^{k_{t+T}-1} \tilde \gamma_{k+1}\esp{F\p{CT^p \norm{ b(X^{1,n}_k,\mu^n_k)     }^p}} .
\]
Consequently,
\begin{align*}
 \sup_{t\in\bR_+,n\in\bN^*}  \esp{F\p{\sup_{u\in [0,T]}{\norm{P^n_{t,t+u}}^p}}} &< \infty\,.
\end{align*}

Therefore, de la Vallée Poussin theorem yields the uniform integrability of the collection $(\sup_{u\in [0,T]}\|{P^n_{t,t+u}}\|^p \,:\,t\in\bR_+,n\in\bN^*)\,.$ The uniform integrability of the collection $({ \sup_{u\in [0,T]} \|{U^n_{t,t+u}}\|^p\,:\, t\in\bR_+,n\in\bN^*})$ is obtained, by the same arguments.
This completes the proof.
\end{proof}

To conclude the proof of Prop.~\ref{prop:tight}, it is sufficient to remark that the tightness conditions provided
in Lem.~\ref{lem:meleard} are satisfied, thanks to Lem.~\ref{lem:Iui} and \ref{lem:TMui}, with Prop.~\ref{prop:space}.

\subsection{Proof of Prop.~\ref{prop:limit-process}}\label{sec:support}

The core of the proof is provided by the following proposition. 
\begin{prop} 
\label{prop:Gmn0}
Let Assumptions~\ref{hyp:b},~\ref{hyp:gamma},~\ref{hyp:algo} and~\ref{hyp:moment} hold, 
\[
\lim_{(t,n)\to(\infty,\infty)} 
  \bE\abs{G(\Phi_t(m^{n}))} =0 \,,
\]
for each function $G\in \cG_p$. 
\end{prop}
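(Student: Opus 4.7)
\textbf{Proof plan for Proposition~\ref{prop:Gmn0}.} The strategy is to reconstruct $G(\Phi_\tau(m^n))$ from the discrete dynamics~\eqref{eq:main} by a discrete It\^o/Taylor expansion of $\phi(X^{i,n}_k)$, and then to show that the resulting noise, drift, and discretisation errors all vanish in expectation. To avoid a clash with the times $s,t$ appearing in~\eqref{eq:G}, let me denote the shift by $\tau$ and study $\bE|G(\Phi_\tau(m^n))|$ as $(\tau,n)\to(\infty,\infty)$. Unfolding $m^n=\frac1n\sum_i\delta_{\bar X^{i,n}}$,
\[
G(\Phi_\tau(m^n))=\frac1n\sum_{i=1}^n M^{i,n}_\tau\,H^{i,n}_\tau,
\]
where $M^{i,n}_\tau:=\phi(\bar X^{i,n}_{\tau+t})-\phi(\bar X^{i,n}_{\tau+s})-\int_s^t L(m^n_{\tau+u})(\phi)(\bar X^{i,n}_{\tau+u})\,du$ and $H^{i,n}_\tau:=\prod_{j=1}^r h_j(\bar X^{i,n}_{\tau+v_j})$. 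The factor $H^{i,n}_\tau$ is bounded (since each $h_j\in C_c$) and depends only on trajectories up to time $\tau+s$.

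A second-order Taylor expansion of $\phi(X^{i,n}_{k+1})-\phi(X^{i,n}_k)$, combined with the increment formula~\eqref{eq:main} and the covariance structure in Assumption~\ref{hyp:algo}--\eqref{hypenum: va}, identifies the $\cF_k^n$-conditional drift of $\phi$ as $\gamma_{k+1}L(\mu^n_k)(\phi)(X^{i,n}_k)+\gamma_{k+1}\langle\nabla\phi(X^{i,n}_k),\bE[\zeta^{i,n}_{k+1}\mid\cF_k^n]\rangle$ plus a Taylor remainder $o(\gamma_{k+1})$ in expectation (using that $H_\phi$ is uniformly continuous by compactness of $\supp\phi$). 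Telescoping from $k=k_{\tau+s}$ to $k_{\tau+t}-1$, with the interpolation boundary contributions treated as in the decomposition~\eqref{eq:decomp}, produces
\[
M^{i,n}_\tau=\mathcal{N}^{i,n}_\tau+\mathcal{Z}^{i,n}_\tau+\mathcal{R}^{i,n}_\tau+\mathcal{E}^{i,n}_\tau,
\]
where $\mathcal{N}^{i,n}_\tau$ gathers the $\xi$-martingale increments (including the centred second-order contribution $\xi^T H_\phi\xi-\bE[\xi^T H_\phi\xi\mid\cF_k^n]$), $\mathcal{Z}^{i,n}_\tau$ the $\zeta$ contribution, $\mathcal{R}^{i,n}_\tau$ the Taylor remainder, and $\mathcal{E}^{i,n}_\tau$ the Riemann-sum error between $\sum_{k}\gamma_{k+1}L(\mu^n_k)(\phi)(X^{i,n}_k)$ and $\int_s^t L(m^n_{\tau+u})(\phi)(\bar X^{i,n}_{\tau+u})\,du$, plus the interpolation boundary terms.

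It then suffices to estimate the four averaged contributions. \emph{(a)~$\xi$-martingale.} Since $H^{i,n}_\tau$ is $\cF^n_{k_{\tau+s}}$-measurable and the $\xi^{i,n}_k$ are orthogonal across particles (the $\un_{i=j}$ factor in Assumption~\ref{hyp:algo}--\eqref{hypenum: va}), a direct variance computation gives $\bE|\frac1n\sum_i\mathcal{N}^{i,n}_\tau H^{i,n}_\tau|^2=O(1/n)$, using the bounded fourth moment of $\xi$ and $\|\nabla\phi\|_\infty,\|H_\phi\|_\infty<\infty$. \emph{(b)~$\zeta$-drift.} Writing $\zeta^{i,n}_{k+1}=\bE[\zeta^{i,n}_{k+1}\mid\cF_k^n]+\tilde\zeta^{i,n}_{k+1}$, the predictable piece contributes at most $\|\nabla\phi\|_\infty\sum\gamma_{k+1}\bE\|\bE[\zeta^{1,n}_{k+1}\mid\cF_k^n]\|\to0$ by Assumption~\ref{hyp:algo}--\eqref{hypenum: eta}, while the martingale piece $\tilde\zeta$ has quadratic variation $O((\sup_{k\ge k_{\tau+s}}\gamma_k)(t-s))\to0$ as $\tau\to\infty$. \emph{(c)~Taylor remainder.} With $\omega$ a modulus of continuity of $H_\phi$ vanishing at $0$, $\bE|\mathcal{R}^{i,n}_\tau|\le C\sum\gamma_{k+1}\bE[\omega(\|\Delta^{i,n}_k\|)]\to0$ uniformly in $i$, since $\bE\|\Delta^{i,n}_k\|^2=O(\gamma_{k+1})$ from Assumptions~\ref{hyp:b},~\ref{hyp:algo},~\ref{hyp:moment} and $\gamma_k\to 0$. \emph{(d)~Riemann-sum error.} Continuity of $b,\sigma,\nabla\phi,H_\phi$, the compact support of $\phi$ (localising the $x$-arguments), a truncation in $\|y\|$ via the uniform moment bound of Assumption~\ref{hyp:moment}--\eqref{hypenum:pinf2}, and the equicontinuity estimate~\eqref{eq:bil} established inside the proof of Lemma~\ref{lem:Iui}, jointly yield $\bE|\mathcal{E}^{i,n}_\tau|\to0$.

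Combining (a)--(d) through the triangle inequality concludes $\bE|G(\Phi_\tau(m^n))|\to0$. The most delicate step is (d): since $b$ is only assumed continuous with linear growth, no Wasserstein Lipschitz bound is available, and one must combine uniform continuity of $b,\sigma$ on compact $x$-sets with tightness from Proposition~\ref{prop:tight} and an explicit truncation in the measure argument to absorb the linear growth in $y$. The remaining items are routine bookkeeping of the increments and the noise assumptions.
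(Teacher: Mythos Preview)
Your plan is essentially the paper's own proof: Taylor--Lagrange expansion of $\phi(X^{i,n}_{k+1})-\phi(X^{i,n}_k)$, telescoping, and a term-by-term analysis in which the Riemann-sum error is controlled through the tightness/equicontinuity established in Proposition~\ref{prop:tight} (the paper writes this out as an eight-term decomposition $G=\sum_{l=1}^8\chi_l^n$, which your groups (a)--(d) simply repackage).

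One point to tighten in (a): you claim $\bE\bigl|\frac1n\sum_i\mathcal N^{i,n}_\tau H^{i,n}_\tau\bigr|^2=O(1/n)$ using cross-particle orthogonality, including for the centred second-order piece $(\xi^{i})^T H_\phi(X^i_k)\xi^{i}-\bE[\cdot\mid\cF_k^n]$. Assumption~\ref{hyp:algo}--\eqref{hypenum: va} only specifies the conditional \emph{covariances} of the $\xi^{i,n}_{k+1}$, not their conditional independence, so orthogonality of these centred quadratic forms across $i$ is not granted. The paper does not rely on it: for that piece it uses the martingale property in $k$ together with $(\sum_i a_i)^2\le n\sum_i a_i^2$ to get a bound of order $\sum_k\gamma_{k+1}^2\le \bar\gamma_{k_{\tau+s}}(t-s)\to 0$. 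With this adjustment your outline matches the paper's argument.
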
 

\begin{proof} 
We need to show that for each $\bR_+\times\bN$--valued sequence 
$(t_n,\varphi_n)\to(\infty,\infty)$ as $n\to\infty$, the convergence  
$\bE\abs{G(\Phi_{t_n}(m^{\varphi_n}))} \to 0$ holds true, where 
$G=G_{r,\phi,h_1,\dots,h_r,t,s,v_1,\dots,v_r}$ has the form of Eq.~(\ref{eq:G}),
with $0\leq v_1\leq \cdots \leq v_r \leq s \leq t$. 
We take $\varphi_n = n$ for notational simplicity, and we write 
$\mathsf{m}_n :=\Phi_{t_n}(m^{{n}}) \in \cP_p(\cC)$. We have 
\begin{multline}
\label{eq:Gmn}
   G(\mathsf{m}_n) =\\  \frac 1n\sum_{i\in[n]} \left( 
  \phi(\bar X^{i,{n}}_{t_n + t})  -\phi(\bar X^{i,{n}}_{t_n + s}) 
  -\int_{t_n+s}^{t_n+t} 
  \psi(\bar X^{i,{n}}_{u},m^n_u)
   \dr u\right ) Q^{i,n},
\end{multline}

where we set $\psi(x,\mu) := \ps{\nabla \phi(x),b(x,\mu)}+\tr\p{\sigma(x,\mu)^TH_\phi(x)\sigma(x,\mu)}$, 
and 
\[
Q^{i,n} := \prod_{j=1}^r h_j(\bar X^{i,{n}}_{t_n + v_j}).  
\]
We note right away that $| Q^{i,n} | \leq C$ where $C$ depends on the 
functions $h_j$ only, and furthermore, the random variables 
$\{Q^{i,n}\}_{i\in[n]}$ are $\cF^n_{k_{t_n+s}}$--measurable, where we recall
that the integer $k_t$ is defined  by $k_t  := \inf \{ k\,:\, \sum_{i=1}^{k}\gamma_{i} \ge t  \}$.

In the remainder, we suppress the superscript $^{(n)}$ from most of our 
notations for clarity. 
To deal with the right hand side of~\eqref{eq:Gmn}, we begin by expressing 
$\phi(\bar X^{i}_{t_n + t})  -\phi(\bar X^{i}_{t_n + s})$ as a 
telescoping sum in the discrete random variables $X^{i}_{k}$: 
\begin{align*} 
 \phi(\bar X^{i}_{t_n + t})  -\phi(\bar X^{i}_{t_n + s})
  &= \sum_{k= k_{t_n+s}}^{k_{t_n+t}-2} \p{\phi( X^{i}_{k+1}) 
      -\phi( X^{i}_{k})}  \\
  &\phantom{=} + \phi(\bar X^{i}_{t_n + t}) 
    -\phi(X^{i}_{k_{t_n+t}-1}) 
  + \phi(X^{i}_{k_{t_n+s}})  -\phi(\bar X^{i}_{t_n + s}). 
\end{align*} 
The summands at the r.h.s. of this expression can be decomposed as
follows.  Remember the form~\eqref{eq:main} of our algorithm. 
Denoting as $H_\phi$ the Hessian matrix of $\phi$, 
by the Taylor-Lagrange formula, there exists 
 ${\theta_{k+1}}\in [\tau_k,\tau_{k+1}]$ such that  
 {
\begin{align*} 
 &\phi( X^{i}_{k+1})  -\phi( X^{i}_{k}) \\
  &= \ps{\nabla\phi(X^{i}_{k}), X^{i}_{k+1}-X^{i}_{k}} 
   +  \frac 12\p{X^{i}_{k+1}-X^{i}_{k}}^T 
    H_\phi(\bar X^{i}_{\theta_{k+1}}) \p{X^{i}_{k+1}-X^{i}_{k}} \\
  &= \gamma_{k+1} 
\ps{\nabla\phi(X^{i,{n}}_{k}), b(X^i_k, \mu_k^n)} 
 \\&+     { \gamma_{k+1}}\tr\p{\sigma(X^i_k,\mu^n_k)^T H_\phi(X^i_k) \sigma(X^i_k,\mu_k^n)}\\ 
&
 + \sqrt{2\gamma_{k+1}} \ps{\nabla\phi(X^{i}_{k}), \xi^i_{k+1}} 
+  \frac 12\p{X^{i}_{k+1}-X^{i}_{k}}^T 
    H_\phi(\bar X^{i}_{\theta_{k+1}}) \p{X^{i}_{k+1}-X^{i}_{k}} \\ 
  & 
  -     { \gamma_{k+1}}\tr\p{\sigma(X^i_k,\mu^n_k)^T H_\phi(X^i_k) \sigma(X^i_k,\mu^n_k)}+ \gamma_{k+1} \ps{\nabla\phi(X^{i}_{k}), \zeta^i_{k+1}}   \\
&= {\gamma_{k+1}} \psi(X^i_k, \mu_k^n) 
 +  \frac 12\p{X^{i}_{k+1}-X^{i}_{k}}^T 
    H_\phi(\bar X^{i}_{\theta_{k+1}}) \p{X^{i}_{k+1}-X^{i}_{k}} 
\\ 
  & 
  + \gamma_{k+1} \ps{\nabla\phi(X^{i}_{k}), \zeta^i_{k+1}} 
  -     { \gamma_{k+1}}\tr\p{\sigma(X^i_k,\mu^n_k)^T H_\phi(X^i_k) \sigma(X^i_k,\mu^n_k)}   \\
  &+ \sqrt{2\gamma_{k+1}} \ps{\nabla\phi(X^{i}_{k}), \xi^i_{k+1}} \\
&=  {\gamma_{k+1}} \psi(X^i_k, \mu^n_k) 
 +\frac 12\p{X^{i}_{k+1}-X^{i}_{k}}^T 
    H_\phi(\bar X^{i}_{\theta_{k+1}}) \p{X^{i}_{k+1}-X^{i}_{k}}  \\ 
&
   + \gamma_{k+1} \ps{\nabla\phi(X^{i}_{k}), \zeta^i_{k+1}} 
    -     { \gamma_{k+1}}\tr\p{\sigma(X^i_k,\mu^n_k)^T H_\phi(X^i_k) \sigma(X^i_k,\mu^n_k)}+
  \\
&
 \sqrt{2\gamma_{k+1}} \ps{\nabla\phi(X^{i}_{k}), \xi^i_{k+1}} 
   + \gamma_{k+1} (\xi^i_{k+1})^T H_\phi(X^{i}_{k}) \xi^i_{k+1}  - \gamma_{k+1} (\xi^i_{k+1})^T H_\phi(X^{i}_{k}) \xi^i_{k+1} 
\end{align*} 
In this last expression, the terms  
$\psi(X^i_k, \mu^n_k)$ will be played against the 
integral term at the right hand side of~\eqref{eq:Gmn}, and the other terms
will be proven to have negligible effects. Since
$\tr(\xi^i_{k+1} (\xi^i_{k+1})^T H_\phi(\bar X^{i}_{k})) = (\xi^i_{k+1})^T H_\phi(X^{i}_{k}) \xi^i_{k+1}  $, the term 
\begin{multline*}
  \eta^i_{k+1} := 
  \sqrt{2\gamma_{k+1}} \ps{\nabla\phi(X^{i}_{k}), \xi^i_{k+1}} 
    + \gamma_{k+1} (\xi^i_{k+1})^T H_\phi(X^{i}_{k}) \xi^i_{k+1} \\
   -  { \gamma_{k+1}}\tr\p{\sigma(X^i_k,\mu^n_k)^T H_\phi(X^i_k) \sigma(X^i_k,\mu^n_k)}    
\end{multline*}

in the expression above is a martingale increment term with respect to the
filtration $(\cF^{n}_k)_k$, thanks to 
Assumption~\ref{hyp:algo}--\eqref{hypenum: va}.  
}

To proceed, considering the integral at the right hand side of~\eqref{eq:Gmn}, 
we can write 
\begin{multline*} 
\int_{t_n+s}^{t_n+t}  \psi(\bar X^{i}_{u},\bar m^{n}_{u})
   \dr u =  \int_{\tau_{k_{t_n+s}}}^{\tau_{k_{t_n+t}-1}} 
 \psi(\bar X^{i}_{u},m^n_u)\dr u \\ 
     + \int_{t_n+s}^{\tau_{k_{t_n+s}} } 
  \psi(\bar X^{i}_{u}, m^{n}_{u})\dr u 
 + \int_{\tau_{k_{t_n + t}-1}}^{t_n+t} \psi(\bar X^{i}_{u},m^{n}_{u})
  \dr u, 
\end{multline*} 
and with these decompositions, we obtain 
$G(\mathsf m_n) = \sum_{l=1}^8 \chi^n_l$, where:
\begin{align*}
\chi_1^n &:=\! 
    \frac 1{n} \!\sum_{i\in[n]} \Bigl\{ 
 \sum_{k=k_{t_n+s}}^{k_{t_n + t}-2} 
\gamma_{k+1} \psi(X^{i}_k,\mu^{n}_k) 
  - \int_{\tau_{k_{t_n+s}}}^{\tau_{k_{t_n+t}-1}}
  \psi(\bar X^{i}_{u},m^{n}_{u})\dr u \Bigr\} Q^i ,\\ 
\chi_2^n &:= 
     \frac 1n \sum_{i\in[n]} \Bigl\{ 
  \phi(\bar X^{i}_{t_n + t}) -\phi(X^{i}_{k_{t_n+t}-1}) 
  + \phi(X^{i}_{k_{t_n+s}})  -\phi(\bar X^{i}_{t_n + s}) \Bigr\} Q^{i}, \\ 
\chi_3^n &:= \!
     -\frac 1{n} \sum_{i\in[n]} \Bigl\{ 
   \int_{t_n+s}^{\tau_{k_{t_n+s}} }
 \psi(\bar X^{i}_{u}, m^{n}_{u})\dr u 
  +\!\int_{\tau_{k_{t_n + t}-1}}^{t_n+t} 
 \!\psi(\bar X^{i}_{u},m_u^n)\dr u\, \Bigr\} Q^{i} \\ 
\chi_4^n &:= \frac 1n\sum_{i\in[n]}  \sum_{k=k_{t_n+s}}^{k_{t_n+t-2}}
  \gamma_{k+1}\ps{\nabla\phi\p{X_k^{i}},\zeta_{k+1}^{i}}Q^i, \\
\chi_5^n &:= \frac 1n \sum_{i\in[n]}
 \sum_{k= k_{t_n+s}}^{k_{t_n+t}-2}\gamma_{k+1}
  \p{\xi^{i}_{k+1}}^T \p{H_\phi(\bar X^i_{\theta_{k+1}}) -H_\phi( X^i_k)} 
  \p{\xi^i_{k+1}} Q^i,  \\
\chi_6^n &:= \frac 1{{n}}\sum_{i\in[n]}
  \sum_{k= k_{t_n+s}}^{k_{t_n+t}-2} 
  \left({\sqrt{2}}\gamma_{k+1}^{3/2}b(X^i_{k},\mu_k^n)^T 
   H_\phi(\bar X^i_{{\theta_{k+1}}})\xi^i_{k+1}\right )Q^i \\ 
&\phantom{=} + \frac 1{{n}}\sum_{i\in[n]}
  \sum_{k= k_{t_n+s}}^{k_{t_n+t}-2}  \left( \frac 12{\gamma_{k+1}^2} 
  b\p{X^i_k,\mu_k^n}^T H_\phi(\bar X^i_{{\theta_{k+1}}}) b\p{X^i_k,\mu_k^n} 
  \right)Q^i, \\
\chi_7^n &:= \frac 1{n}\sum_{i\in[n]} 
  \sum_{k=k_{t_n+s}}^{k_{t_n+t-2}}\gamma_{k+1}^{3/2} 
  \left(\p{\sqrt{\gamma_{k+1}}\p{b(X_k^i,\mu_k^n)+\frac{\zeta_{k+1}^i}2} 
  + \sqrt{2}\xi_{k+1}^i}^T \right. \\
     &\phantom{=} \quad\quad\quad\quad\quad\quad\quad\quad\quad\quad 
\left. H_\phi(\bar X^i_{\theta_{k+1}})\zeta_{k+1}^i\right) Q^i, 
   \quad \text{and} 
\end{align*}  
\[
\chi_8^n := \frac 1n  \sum_{i\in[n]}
  \sum_{k= k_{t_n+s}}^{k_{t_n+t}-2} \eta^i_{k+1} Q^i. 
\]
To prove our proposition, we show that $\bE| \chi_l^n | \to 0$ for all 
$l\in[8]$. The notation $E^\times_\times$ will be generically used to refer
to error terms. 

{Let us start with $\bE| \chi_1^n|$. For $i,j,\ell\in[n]$, writing 
\[
E_{i}^n := 
\sum_{k=k_{t_n+s}}^{k_{t_n + t}-2} \gamma_{k+1} \psi(X^i_k,\mu_k^n) 
 - \int_{\tau_{k_{t_n+s}}}^{\tau_{k_{t_n+t}-1}} 
 \psi(\bar X^i_{u},m_u^n)\dr u 
\]
and using the boundedness of $Q^i$ and the exchangeability as stated by 
Assumption~\ref{hyp:algo}-\eqref{hypenum: exchangeable}, we obtain that
\[
\bE | \chi_1^n | \leq C \bE |E^n_1|. 
\]
We begin by providing a bound on the second moments of $E_{i}^n$. Recalling the definition of $\psi$, and using the compactness of 
the support of $\phi$ along with Assumption~\ref{hyp:b}, we obtain that 
\begin{align*} 
\bE (E_{i}^n)^2 &\leq 
2 (t-s)^2 \max_{u \in [t_n+s, t_n+t]} 
  \bE \| \psi(\bar X^i_{u},m_u^n) \|^2  \\ 
 &\leq C (t-s)^2 \left(1 + \sup_{u\geq 0} \bE \norm{b(\bar X^1_u,m_u^n)}^2 \right) \\ 
 &\leq C(t-s)^2   
\end{align*} 
thanks to Lem.~\ref{lem:moment}. 
To obtain that $\bE| \chi_1^n| \to 0$, we thus need to show that 
$\bE | E_{1}^n | \to 0$. }

By Prop.~\ref{prop:tight} above, the sequence $(\mathsf{m}_n)$ of
$\cP(\cC)$--valued random variables is tight.  By Lemma
\ref{lem:meleardSimple}, this is equivalent to the weak$\star$-relative
compactness of the sequence of intensities $(\bI(\mathsf{m}_n))$. For each
Borel set $A \in \cB(\cC)$, we furthermore have that 
\[
\bI(\mathsf{m}_n)(A) = 
  \frac 1n \sum_{i\in[n]} \bP\left[ \bar X^{i,n}_{t_n+\cdot} \in A \right] 
 = \bP\left[ \bar X^{1,n}_{t_n+\cdot} \in A \right] 
\]
by the exchangeability, thus, the sequence of random variables 
$(\bar X^{1,n}_{t_n+\cdot})_n$ is tight. Let us work on the r. v.
$U_n := \pi_{[0,t-s]}\# \bar X^1_{t_n+s+\cdot}$ defined 
on the set $\cC([0,t-s])$ of continuous functions on the interval $[0,t-s]$.
Since $(\bar X^{1,n}_{t_n+\cdot})_n$ and $((\Theta_t)_\#m^n)_{t,n}$ are tight (by prop.~\ref{prop:tight}), given an arbitrary 
$\varepsilon > 0$, there exists two compact sets 
$K_\varepsilon \subset \cC([0,t-s])$ and $\cK_\varepsilon\subset \cP_p(\cC)$ such that 
\[
\forall n \in \bN^*, \quad 
\bP\left[ U_n \not\in K_\varepsilon \right] + \bP[\Phi_{t_n+s}(m^n) \notin \cK_\varepsilon] \leq \varepsilon . 
\]
{
Writing 
$\bar\gamma_l = \sup_{k\geq l} \gamma_k$, we now have 
\begin{align*}
&\abs{E^n_{1}} \leq  \sum_{k=k_{t_n+s}}^{k_{t_n + t}-2} 
  \gamma_{k+1}\max_{\delta\in[0,\gamma_{k+1}]} 
  \abs{\psi(\bar X^1_{\tau_k + \delta},m^n_{\tau_{k+\delta}}) 
   - \psi(\bar X^1_{\tau_k},m^n_{\tau_k}) } \\
&\leq (t-s) \max_{\substack{u,v\in[0,t-s] \\
 |u-v|\leq \bar\gamma_{k_{t_n+s}}}}  
  \left| \psi(U_n(u), m^n_{t_n+s+u}) - \psi(U_n(v), m^n_{t_n+s+v}) \right| . 
\end{align*}

We thus can write 
\begin{align} 
&\bE \abs{E^n_{1}} = 
  \bE \abs{E^n_{1}} \un_{(U_n,\Phi_{t_n+s}(m^n)) \in K_\varepsilon\times \cK_\varepsilon} 
  + \bE \abs{E^n_{1}} \un_{(U_n, \Phi_{t_n+s}(m^n)) \not\in K_\varepsilon\times \cK_\varepsilon} 
   \nonumber \\ 
&\leq 
(t-s) \sup_{f,\rho \in K_\varepsilon\times\cK_\varepsilon} 
   \max_{\substack{u,v\in[0,t-s] \\
   |u-v|\leq \bar\gamma_{k_{t_n+s}}}}  
  \left| \psi(f(u),\rho_u) - \psi(f(v), \rho_v) \right| 
  \nonumber\\ 
  & \phantom{\le}+\sqrt{\bE (E^n_{1})^2} 
  \sqrt{ \bP\left[ U_n \not\in K_\varepsilon \right] + \bP[\Phi_{t_n+s}(m^n))\notin \cK_\varepsilon]} . 
\label{borneE} 
\end{align}
By the Arzelà-Ascoli theorem, the functions in $K_\varepsilon$ are 
uniformly equicontinuous and bounded. 
Moreover, the set $\{u\in[0,t-s]\mapsto \rho_u ,\quad \rho \in \cK_\varepsilon\}$ is also uniformly equicontinuous and $\{ \rho_u \,:\, u\in [0,t-s], \rho\in\cK_\varepsilon \}$ is include in a compact subspace of $\cP_p(\bR^d)$.

Since $\psi$ is a continuous function, by Heine theorem, $\psi$ is equicontinuous, when we restrict $\psi$ to a compact space. 
Therefore, one can easily check that the set of functions $\cS$ on $[0,t-s]$ defined as
\[
\cS := \left\{ u\mapsto \psi(f(u), \rho_u) \ : \ (f, \rho) \in K_\varepsilon \times \cK_\varepsilon
 \right\} 
\]
is a set of uniformly equicontinuous functions. As a consequence, the first
term at the right hand side of the inequality in~\eqref{borneE} converges to
zero as $n\to\infty$, since $\bar\gamma_{k_{t_n+s}} \to 0$. The second term is
bounded by $C \sqrt{\varepsilon}$ thanks to the bound we obtained on $\bE
(E^n_{1})^2$. Since $\varepsilon$ is arbitrary, we obtain that $\bE |
E_{1}^n | \to 0$, thus, $\bE| \chi_1^n | \to 0$. }

The terms $\chi_n^2$, $\chi_n^3$, and $\chi_n^5$ are dealt with similarly to
$\chi_n^1$. Considering $\chi_n^2$, we have by the exchangeability that
$\bE | \chi_n^2 | \leq C \bE | E^n_1 |$, with 
\begin{align*} 
E^n_1 &= 
  \phi(\bar X^{1}_{t_n + t}) -\phi(X^{1}_{k_{t_n+t}-1}) 
  + \phi(X^{1}_{k_{t_n+s}})  -\phi(\bar X^{1}_{t_n + s})  \\
&= \phi(\bar X^{1}_{t_n + t}) -\phi(\bar X^{1}_{\tau_{k_{t_n+t}-1}}) 
  + \phi(\bar X^{1}_{\tau_{k_{t_n+s}}})  -\phi(\bar X^{1}_{t_n + s}) .
\end{align*} 
Keeping the notations $U_n := \pi_{[0,t-s]}\# \bar X^1_{t_n+s+\cdot}$
and $\bar\gamma_l$ introduced above, we have 
\[
|E^n_1| \leq 2 \max_{\substack{u,v\in[0,t-s] \\
 |u-v|\leq \bar\gamma_{k_{t_n+s}}}}  
  \left| \phi(U_n(u)) - \phi(U_n(v)) \right| . 
\]
Taking $\varepsilon > 0$, selecting the compact 
$K_\varepsilon \subset \cC([0, t-s])$ as we did for $\chi_1^n$, and 
recalling that the function $\phi$ is bounded, we have 
\begin{align*} 
\bE \abs{E^n_1} &\leq 2 
 \sup_{f \in K_\varepsilon} 
   \max_{\substack{u,v\in[0,t-s] \\
   |u-v|\leq \bar\gamma_{k_{t_n+s}}}}  
  \left\| \phi(f(u)) - \phi(f(v)) \right\| 
   + C \bP\left[ U_n \not\in K_\varepsilon \right] , 
\end{align*}
and we obtain the $\bE|\chi_n^2| \to 0$ by the same argument as for $\chi_n^1$. 

The treatment of $\chi_n^3$ is very similar to $\chi_n^2$ and is omitted. Let
us provide some details for $\chi_n^5$. Here we have by exchangeability that
\[
\bE | \chi_5^n | \leq \sum_{k= k_{t_n+s}}^{k_{t_n+t}-2} 
  \gamma_{k+1} \bE | E^{1,n}_k |, 
\]
where 
\[
   E^{1,n}_k := 
  \p{\xi^1_{k+1}}^T \p{H_\phi(\bar X^1_{\theta_{k+1}}) 
  - H_\phi( X^1_k)} \p{\xi^1_{k+1}} Q^1. 
 \]
satisfies 
\[
| E^{1,n}_k | \leq C \| \xi^1_{k+1} \|^2 \max_{\substack{u,v\in[0,t-s] \\
 |u-v|\leq \bar\gamma_{k_{t_n+s}}}}  
  \left\| H_\phi(U_n(u)) - H_\phi(U_n(v)) \right\| . 
\]
Therefore, 
\begin{align*} 
\bE \abs{E^{1,n}_k} &= 
  \bE \abs{E^{1,n}_k} \un_{U_n \in K_\varepsilon} 
  + \bE \abs{E^{1,n}_k} \un_{U_n\not\in K_\varepsilon} \nonumber \\ 
&\leq  C \bE \| \xi_{k+1} \|^2 
 \sup_{f \in K_\varepsilon} 
   \max_{\substack{u,v\in[0,t-s] \\
   |u-v|\leq \bar\gamma_{k_{t_n+s}}}}  
  \left\| H_\phi(f(u)) - H_\phi(f(v)) \right\| 
  \nonumber \\ &\phantom{\le} + \sqrt{\bE (E^{1,n}_{k})^2} 
  \sqrt{\bP\left[ U_n \not\in K_\varepsilon \right]} . 
\end{align*}
Since $\bE \| \xi_{k+1} \|^2$ and $\bE (E^{1,n}_{k})^2$ are bounded, we obtain
that $\bE | \chi_5^n |  \to 0$. 

Considering the term $\chi_n^4$, we have by exchangeability 
\begin{align*}
\bE|\chi_4^n| &\leq C \bE\left| \sum_{k=k_{t_n+s}}^{k_{t_n+t-2}}
    \gamma_{k+1}\ps{\nabla\phi\p{X_k^{1}},\zeta_{k+1}^{1}} \right| \\ 
&\leq C \bE\left| \sum_{k=k_{t_n+s}}^{k_{t_n+t-2}}
 \gamma_{k+1}\ps{\nabla\phi\p{X_k^{1}},\bE[ \zeta_{k+1}^{1} \, | \, \cF^n_k]}
  \right| 
 \\&+  C \bE\left| \sum_{k=k_{t_n+s}}^{k_{t_n+t-2}}
    \gamma_{k+1}\ps{\nabla\phi\p{X_k^{1}},\mathring{\zeta}_{k+1}^1} \right| \\
 &:= \bE| \chi_{4,1}^n| + \bE | \chi_{4,2}^n |, 
\end{align*}
where $\mathring{\zeta}_k^1 = \zeta^1_k - 
  \bE[ \zeta_k^{1} \, | \, \cF^n_{k-1}]$ is a martingale increment with 
respect to the filtration $(\cF^n_k)_k$. We have 
\[
\bE| \chi_{4,1}^n| \leq C (t-s) 
  \sup_{l \geq k_{t_n+s}} 
  \bE \left\| \bE[ \zeta_{l+1}^{1} \, | \, \cF^n_l] \right\|, 
\]
which converges to zero by Assumption~\ref{hyp:algo}--\eqref{hypenum: eta}.
By the martingale property, we furthermore have 
\[
\bE(\chi_{4,2}^n)^2 \leq C \sum_{k=k_{t_n+s}}^{k_{t_n+t-2}} \gamma_{k+1}^2 
 \leq C \bar\gamma_{k_{t_n+s}} (t-s) , 
\]
which also converges to zero. Thus, $\bE|\chi_4^n| \to 0$. 

We now turn to $\chi_6^n$. Here we write 
\[
   \chi_6^n = \frac 1{{n}}\sum_{i\in[n]}
  \sum_{k= k_{t_n+s}}^{k_{t_n+t}-2} \gamma_{k+1}^{3/2} E^i_k,  
\]
where 
\[
 \begin{split}
   E^i_k &:=  
  \sqrt{2} b(X^i_k, \mu_k^n)^T H_\phi(\bar X^i_{\theta_{k+1}})\xi^i_{k+1}Q^i \\ 
   &\phantom{=}
   +  
  \frac 12 \sqrt{\gamma_{k+1}} b(X^i_k,\mu_k^n)^T H_\phi(\bar X^i_{\theta_{k+1}})
   b(X^i_k, \mu_k^n) Q^i  
 \end{split}
\]
satisfies 
\[
| E^i_k | \leq  C \| b(X^{i}_k,\mu_k^n) \|\| \xi^i_{k+1} \| 
   + {C} \sqrt{\gamma_{k+1}} \| b(X^{i}_k,\mu_k^n) \|^2 .
\]
We readily obtain from Assumptions~\ref{hyp:b},~\ref{hyp:algo} and Lem.~\ref{lem:moment} that $\bE |  E^i_k | \leq C$,
which leads to $\bE | \chi_6^n | \to 0$. 

The treatment of the term $\chi_7^n$ is similar and is omitted. 

We finally deal with $\chi_8^n$ that involves the martingale increments
$\eta^i_k$. We decompose this term by writing 
{\begin{align*}
&\chi_8^n = 
  \sum_{k= k_{t_n+s}}^{k_{t_n+t}-2}   
 \frac 1n  \sum_{i\in[n]}
 \sqrt{2\gamma_{k+1}} \ps{\nabla\phi(X^{i}_{k}), \xi^i_{k+1}} Q^i + \\
&
  \sum_{k= k_{t_n+s}}^{k_{t_n+t}-2}   
 \!\!\frac {\gamma_{k+1}}{n}  \!\!\sum_{i \in[n]}
    \left( (\xi^i_{k+1})^T H_\phi(X^{i}_{k}) \xi^i_{k+1} 
  - \tr\p{ \sigma(X^i_k,\mu_k^n)^T H_\phi(X^i_k)\sigma(X^i_k,\mu_k^n }\right) Q^i \\
&:= \chi_{8,1}^n + \chi_{8,2}^n.
\end{align*}}
Since the random vectors
$\xi_{k+1}^1, \ldots, \xi_{k+1}^n$ are decorrelated conditionally to 
$\cF^n_k$ by Assumption~\ref{hyp:algo}-\eqref{hypenum: va}, we obtain that
\[
 \espcond{\Bigl(\frac 1{{n}}\sum_{i\in[n]} 
 \sqrt{2\gamma_{k+1}} \ps{\nabla\phi(X^i_k), \xi^i_{k+1}} Q^i\Bigr)^2} 
   {\cF_{k}^n} \leq C \frac{\gamma_{k+1}}{n} ,  
\]
and by the martingale property, 
\[
\bE (\chi_{8,1}^n)^2 \leq \sum_{k= k_{t_n+s}}^{k_{t_n+t}-2} 
   C \frac{\gamma_{k+1}}{n} \leq \frac{C (t-s)}{n}. 
\]
Using the martingale property again along with the inequality 
$(\sum_1^n a_i)^2 \leq n \sum_1^n a_i^2$, we also have 
{\begin{align*} 
\bE (\chi_{8,2}^n)^2 &\leq \sum_{k= k_{t_n+s}}^{k_{t_n+t}-2} 
 \gamma_{k+1}^2 \bE \left( \frac 1{n}\sum_{i\in[n]} 
   \left( (\xi^i_{k+1})^T H_\phi(X^{i}_{k}) \xi^i_{k+1} \right . \right . \\
 &\left. \left. - \tr\p{ \sigma(X^i_k,\mu^n_k)^T H_\phi(X^i_k)\sigma(X^i_k,\mu_k^n}\right) Q^i \right)^2  \\
 &\leq C \sum_{k= k_{t_n+s}}^{k_{t_n+t}-2} 
 \gamma_{k+1}^2  \\
 &\leq \bar\gamma_{k_{t_n+s}} C (t-s) . 
\end{align*} }
It results that $\bE(\chi_8^n)^2 \to 0$. The proof of 
Prop.~\ref{prop:Gmn0} is completed. 
\end{proof}
 }
\begin{proof}[Proof of Prop.~\ref{prop:limit-process}] 
Let $(t_n,\varphi_n)_n$ be a $\bR_+\times \bN^*$--valued sequence such that the
distribution of $(\Phi_{t_n}(m^{\varphi_n}))_n$ converges to a measure $M
\in \cM$, which exists thanks to the tightness of
$(\Phi_{t_n}(m^{\varphi_n}))_n$ as established by
Prop.~\ref{prop:tight}.  Let $G \in \cG_p$. By the continuity of $G$ as
established by Lem.~\ref{lem:continuityG}, $G(\Phi_{t_n}(m^{\varphi_n}))$
converges in distribution to $G_\# M \in \cP(\bR)$. On the other hand, we know
by the previous proposition that $G(\Phi_{t_n}(m^{\varphi_n}))$ converges
in probability to zero.  Therefore, $G_\# M = \delta_0$. 

Let $\supp(M) \subset \cP_p(\cC)$ be the support of $M$, and let $\rho \in
\supp(M)$. By definition of the support, $M(\cN) > 0$ for each neighborhood
$\cN$ of $\rho$. Therefore, since $G_\# M = \delta_0$,  there exists a sequence
$(\rho_l)_{l\in\bN}$ such that $\rho_l \in \supp(M)$, $G(\rho_l) = 0$, and
$\rho_l \to_l \rho$ in $\cP_p(\cC)$. By the continuity of $G$, we obtain that
$G(\rho) = 0$, which shows that $\supp(M) \subset G^{-1}(\{0\})$.  Since $G$ is
arbitrary, we obtain that $\text{supp}(M) \subset \sV_p =\bigcap_{G\in\cG_p} G^{-1}(\{0\})$, 
and the theorem is proven. 
\end{proof}

\subsection{Proof of Th.~\ref{th:BC}} 
\label{sec:ergodicTH}
Throughout this paragraph, we assume that $1\le p\le2$.

We define the following collection $(M_t^n:t\geq 0,n\in\bN^*)$ of r.v. on $\cP(\cP_p(\cC))$:
\begin{equation}\label{eq:Mergo}
  M_{t}^{n}  := \frac 1{t}\int_{0}^{t} \delta_{\Phi_s(m^n)}\dr s\,.
\end{equation}

\begin{lemma}\label{lem:tighM}
The collection of r.v. $(M_t^n,\quad t\ge 0, n\in\bN^*)$ is tight in $\cP(\cP_p(\cC))$.
\end{lemma}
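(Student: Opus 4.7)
The plan is to apply Lemma~\ref{lem:meleardSimple} with $E=\cP_p(\cC)$, which is Polish by Prop.~\ref{prop:space}. This reduces the desired tightness of $(M_t^n)$ in $\cP(\cP_p(\cC))$ to the weak$\star$-relative compactness of the family of intensities $\{\bI(M_t^n):t\ge 0,n\in\bN^*\}$ in $\cP(\cP_p(\cC))$. Since $\cP_p(\cC)$ is Polish, Prokhorov's theorem shows that the latter is equivalent to the tightness of $\{\bI(M_t^n)\}$ as a subset of $\cP(\cP_p(\cC))$ in the usual sense.

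The first step is to compute the intensity explicitly. By Fubini's theorem, for every Borel set $A\subset\cP_p(\cC)$,
$$
\bI(M_t^n)(A)=\bE\Bigl[\frac{1}{t}\int_0^t\un_A(\Phi_s(m^n))\,ds\Bigr]=\frac{1}{t}\int_0^t\bP[\Phi_s(m^n)\in A]\,ds.
$$
The second step is to invoke Prop.~\ref{prop:tight}, which asserts that $\{\Phi_s(m^n):s\ge 0,n\in\bN^*\}$ is tight in $\cP_p(\cC)$. Hence, given any $\epsilon>0$, there exists a compact set $K_\epsilon\subset\cP_p(\cC)$ with $\sup_{s\ge 0,\,n\in\bN^*}\bP[\Phi_s(m^n)\notin K_\epsilon]\le\epsilon$. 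Plugging this bound into the above formula yields
$$
\bI(M_t^n)(K_\epsilon^c)\le\frac{1}{t}\int_0^t\epsilon\,ds=\epsilon,
$$
uniformly in $t>0$ and $n\in\bN^*$. This establishes the required tightness of $\{\bI(M_t^n)\}_{t,n}$, and thus concludes the proof.

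No real obstacle is expected: the argument is essentially a Fubini-plus-Prokhorov corollary of the previously established Prop.~\ref{prop:tight}. The only minor matter of care is that Lemma~\ref{lem:meleardSimple} is stated for sequences, whereas our collection is indexed by $[0,\infty)\times\bN^*$; this is handled routinely by extracting an arbitrary sequence $(t_j,n_j)$ from the index set and applying the lemma to $(M_{t_j}^{n_j})_j$, using that tightness of a family in a metrizable setting is equivalent to tightness of every sequence drawn from it.
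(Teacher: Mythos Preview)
Your proof is correct and follows essentially the same approach as the paper: both apply Lemma~\ref{lem:meleardSimple} with $E=\cP_p(\cC)$, compute the intensity $\bI(M_t^n)(A)=\frac{1}{t}\int_0^t\bP(\Phi_s(m^n)\in A)\,ds$, and then use the tightness of $\{\Phi_s(m^n)\}$ from Prop.~\ref{prop:tight} to produce a compact $K_\varepsilon\subset\cP_p(\cC)$ with $\bI(M_t^n)(K_\varepsilon)\ge 1-\varepsilon$ uniformly in $t,n$. Your added remarks on Fubini and on the sequence-versus-net issue are fine and do not depart from the paper's argument.
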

\begin{proof}
  Based on Lem.~\ref{lem:meleardSimple}, we just need to establish that the family of measures $(\bI(M_t^n))$ is relatively compact in the space $\cP(\cP_p(\cC))$.
  Recall that $\bI(M_t^n)$ is the probability measure which, to every Borel subset $A\subset \cP_p(\cC)$, associates:
  $$
  \bI(M_t^n)(A) = \frac 1t \int_0^t \bP(\Phi_s(m^n)\in A)ds
  $$
  Consider $\varepsilon>0$. By Prop.~\ref{prop:tight}, there exists a compact set $\cK\in \cP_p(\cC)$ such that
  $\bP(\Phi_s(m^n)\in \cK)>1-\varepsilon$, for all $s,n$. As a consequence, $ \bI(M_t^n)(\cK)>1-\varepsilon$. The proof is completed.
\end{proof}
Let us denote by $\mathscr{M}$ the set of weak$\star$ accumulation points of the net $((M_t^n)_\#\bP:t\geq 0,n\in\bN^*)$, as $(t,n)\to(\infty,\infty)$.
By Lem.~\ref{lem:tighM}, $\mathscr{M}$ is a non empty subset of $\cP(\cP(\cP_p(\cC)))$.
Define:
$$
\cV_p = \{M\in \cP(\cP_p(\cC))\,:\, M(\sV_p)=1\}\,.
$$
\begin{lemma}
\label{lem:ergodic}
  For every $\Upsilon\in\mathscr{M}$, 
  $\Upsilon(\cV_p)=1$. 
\end{lemma}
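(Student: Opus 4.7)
The plan is to exploit the characterization $\sV_p = \bigcap_{G\in\cG_p} G^{-1}(\{0\})$ from Eq.~\eqref{eq:Vp} together with the single-time estimate of Prop.~\ref{prop:Gmn0}. Fix $G\in\cG_p$. I will first establish the ergodic $L^1$ estimate
$$
\bE \int_{\cP_p(\cC)} |G(\rho)|\, dM_t^n(\rho)\ \xrightarrow[(t,n)\to(\infty,\infty)]{}\ 0\,,
$$
then transfer it to the weak$\star$ limit $\Upsilon$, and finally reduce to a countable family of $G$'s to conclude $\Upsilon(\cV_p)=1$.

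For the ergodic estimate, Fubini together with the definition~\eqref{eq:Mergo} of $M_t^n$ yields
$$
\bE \int |G(\rho)|\, dM_t^n(\rho) = \frac 1t \int_0^t \bE|G(\Phi_s(m^n))|\, ds\,.
$$
By Prop.~\ref{prop:Gmn0} the integrand vanishes as $(s,n)\to(\infty,\infty)$; combined with a uniform-in-$(s,n)$ bound $\bE |G(\Phi_s(m^n))| \leq C$ (which I would derive from the compact support of $\phi$ and the $h_j$'s entering $G$, the bounds on $b$ and $\sigma$ of Assumption~\ref{hyp:b}, and the moment control of Assumption~\ref{hyp:moment}), a Cesaro-style split of the integral on $[0,T]\cup[T,t]$ with $T$ large then $t\to\infty$ delivers the claim.

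To pass to the limit $\Upsilon$, I would truncate and use continuity. For each $K>0$ the functional $\Xi_{G,K}: M\mapsto \int (|G(\rho)|\wedge K)\, dM(\rho)$ is bounded continuous on $\cP(\cP_p(\cC))$ equipped with its weak$\star$ topology, since $|G|\wedge K$ is bounded continuous on $\cP_p(\cC)$ by Lemma~\ref{lem:continuityG}. Choosing a sequence $(t_n,\varphi_n)\to(\infty,\infty)$ along which $(M_{t_n}^{\varphi_n})_\#\bP$ converges weak$\star$ to $\Upsilon$,
$$
\int \Xi_{G,K}\, d\Upsilon = \lim_n \bE\, \Xi_{G,K}(M_{t_n}^{\varphi_n}) \leq \lim_n \bE \int |G|\, dM_{t_n}^{\varphi_n} = 0\,,
$$
and letting $K\to\infty$, monotone convergence gives $\int\!\!\int |G(\rho)|\, dM(\rho)\, d\Upsilon(M)=0$; hence $M(\{G\neq 0\})=0$ for $\Upsilon$-a.e.\ $M$.

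Finally I would build a countable subfamily $\cG_p^0\subset\cG_p$ with $\sV_p=\bigcap_{G\in\cG_p^0} G^{-1}(\{0\})$ by restricting the parameters $(\phi, r, h_1,\ldots,h_r, v_1,\ldots,v_r,s,t)$ defining $G$ to countable dense subsets (using separability of $C_c^2(\bR^d,\bR)$ and $C_c(\bR^d,\bR)$ for the test functions and rationals for the times) and invoking the joint continuity of $G$ in its parameters. Intersecting countably many $\Upsilon$-a.s.\ events then yields $M(\sV_p)=1$ for $\Upsilon$-a.e.\ $M$, which is exactly $\Upsilon(\cV_p)=1$. The main obstacle is securing the uniform bound $\sup_{s,n}\bE|G(\Phi_s(m^n))|<\infty$ behind the Cesaro step: because $L(\rho_u)(\phi)$ inside $G$ involves the linearly growing field $b$ integrated against $\rho_u$, one must carefully combine the compact support of $\phi$ (yielding bounded $\nabla\phi,H_\phi$), the boundedness of $\sigma$ and the $h_j$'s, and the $p$-moment control on $(X_k^{1,n})$ from Assumption~\ref{hyp:moment} to get this bound uniformly in $s,n$.
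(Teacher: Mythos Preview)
Your argument is correct, but the paper takes a shorter route that sidesteps two of the technical steps you flag.

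The paper's proof uses a single bounded continuous functional, namely $\rho\mapsto\sW_p(\rho,\sV_p)$, in place of your family $\{|G|:G\in\cG_p\}$. Because $\sW_p$ is bounded by $1$ by construction, no truncation is needed; and because this one function already detects membership in $\sV_p$, no countable dense subfamily $\cG_p^0$ is needed either. The paper then invokes Prop.~\ref{prop:limit-process} (rather than the more primitive Prop.~\ref{prop:Gmn0}) to obtain $\limsup_{(t,n)}\bE\,\sW_p(\Phi_t(m^n),\sV_p)=0$ directly, via tightness and the fact that every distributional limit of $\Phi_t(m^n)$ is supported on $\sV_p$. The Cesaro step is then trivial because the integrand is bounded by $1$.

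Your approach has the merit of being self-contained from Prop.~\ref{prop:Gmn0}, and the uniform bound $\sup_{s,n}\bE|G(\Phi_s(m^n))|<\infty$ you worry about is indeed available: expanding $G(\Phi_s(m^n))$ as in Eq.~\eqref{eq:Gmn} and using $|Q^{i,n}|\leq C$, $\|\nabla\phi\|_\infty,\|H_\phi\|_\infty<\infty$, $\|\sigma\|_\infty<\infty$, and $\|b(x,y)\|\leq C(1+\|x\|+\|y\|)$ gives, after exchangeability, a bound of the form $C(1+\sup_{k,n}\bE\|X_k^{1,n}\|)$, finite by Assumption~\ref{hyp:moment}. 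The countable-family reduction is also standard. So nothing is wrong; the paper's argument is simply more economical because it packages the martingale-problem characterization into the single distance $\sW_p(\cdot,\sV_p)$ and leans on Prop.~\ref{prop:limit-process}, which has already done the work of identifying $\sV_p$.
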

\begin{proof}
   Consider $\Upsilon\in \mathscr{M}$. Without restriction, we write $\Upsilon$ as the weak$\star$ limit of some
 sequence of the form $(M_{t_n}^n)_\#\bP$.
 The distance $\sW_p(\,.\,,\sV_p)$ to the set $\sV_p$ (which is non empty by Prop.~\ref{prop:limit-process})
 is a continuous function on $\cP_p(\cC)$.
  Denoting by $\ps{\,.\,,\,.\,}$ the natural dual pairing on $C_b(\cP_p(\cC))\times \cP(\cP_p(\cC))$,
  the function $\ps{\sW_p(\,.\,,\sV_p),\,\cdot\,}$ is  a  continuous on $\cP(\cP_p(\cC))$.
  Thus, the sequence of real r.v. $\ps{\sW_p(\,.\,,\sV_p),M_{t_n}^n}$ converges in distribution to
   $\ps{\sW_p(\,.\,,\sV_p),\,\cdot\,}_\#\Upsilon$.
  These variables being bounded, we obtain by taking the limits in expectation:
  \begin{align*}
    \int\int \sW_p(m,\sV_p)dM(m) d\Upsilon(M) &= \lim_{n\to\infty} \bE(\ps{\sW_p(\,.\,,\sV_p),M_{t_n}^n}) \\
&= \lim_{n\to\infty} \frac 1{t_n}\int_0^{t_n}\bE(\sW_p(\Phi_s(m^n),\sV_p)) ds \\
&\leq \limsup_{(t,n)\to(\infty,\infty)} \bE(\sW_p(\Phi_t(m^n),\sV_p)) = 0\,,
  \end{align*}
  where the last equality is due to Prop.~\ref{prop:limit-process}. As $\sV_p$ is closed by Prop.~\ref{prop:Vp}, this concludes the proof.
\end{proof}
Recall the definition of the shift $\Theta_t : x\mapsto x_{t+\cdot}$ defined in $\cC$.
For every $t\geq 0$, define $(\Theta_t)_{\#\#} = ((\Theta_t)_\#)_\#$.
Define:
$$
\cI := \{M\in \cP(\cP_p(\cC))\,:\, \forall t> 0, M=(\Theta_t)_{\#\#} M\}\,.
$$
In other words, for every $M\in \cI$ and for every $t>0$,
 $(\Theta_t)_\#$ preserves $M$.
\begin{lemma}\label{lem:invariant}
For every $\Upsilon\in\mathscr{M}$, $\Upsilon(\cI)=1$.
\end{lemma}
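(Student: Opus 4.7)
The plan is to exploit the fact that $M_t^n$ is a Cesàro average, hence asymptotically shift-invariant in a quantitative sense. Using the semigroup identity $(\Theta_u)_\# \circ \Phi_s = \Phi_{s+u}$, a direct computation yields
\[
(\Theta_u)_{\#\#} M_t^n \;=\; \frac{1}{t}\int_u^{t+u} \delta_{\Phi_s(m^n)}\,ds,
\]
so that the signed measure $(\Theta_u)_{\#\#} M_t^n - M_t^n$ has total variation at most $2u/t$, deterministically and independently of $n$.

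With this observation, the rest is a standard weak-limit argument. I fix $\varphi \in C_b(\cP_p(\cC))$ and $u > 0$, and consider the bounded continuous functional $F_{\varphi,u}: \cP(\cP_p(\cC)) \to \bR$ defined by $F_{\varphi,u}(M) := \int \varphi\,d((\Theta_u)_{\#\#} M) - \int \varphi\,dM$. The total variation bound gives $|F_{\varphi,u}(M_t^n)| \le 2u\|\varphi\|_\infty / t$. Picking a sequence $(t_k,\varphi_k) \to (\infty,\infty)$ along which $(M_{t_k}^{\varphi_k})_\#\bP$ converges weakly to $\Upsilon$, and applying the weak convergence to the bounded continuous function $F_{\varphi,u}^2$, I get
\[
\int F_{\varphi,u}^2\,d\Upsilon \;=\; \lim_k \bE\!\left[F_{\varphi,u}(M_{t_k}^{\varphi_k})^2\right] \;=\; 0,
\]
so $F_{\varphi,u}(M) = 0$ for $\Upsilon$-almost every $M$.

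To conclude, I pick a countable family $\{\varphi_j\} \subset C_b(\cP_p(\cC))$ separating the measures of $\cP(\cP_p(\cC))$ (available because $\cP_p(\cC)$ is Polish by Prop.~\ref{prop:space}), together with a countable dense set $D \subset (0,\infty)$. Intersecting the above $\Upsilon$-null sets over the countable index set $\{\varphi_j\}_j \times D$, I obtain a $\Upsilon$-full set on which $(\Theta_u)_{\#\#} M = M$ for every $u \in D$. Extending this to arbitrary $u > 0$ relies on the weak$\star$-continuity of $u \mapsto (\Theta_u)_{\#\#} M$, which reduces, by dominated convergence, to the $\sW_p$-continuity of $u \mapsto (\Theta_u)_\# \rho$ for fixed $\rho \in \cP_p(\cC)$; this last point follows by using the identity coupling and invoking dominated convergence with the $\rho$-integrable dominating function $\sup_{|v-u|\le 1}\sup_{t \le T+1}\|x_t\|^p$. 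This time-continuity check is the only step with any technical flavor; the whole argument really hinges on the explicit bound $2u/t$ between $M_t^n$ and its time-shift.
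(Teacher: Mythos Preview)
Your proof is correct and follows essentially the same approach as the paper: both hinge on the explicit computation $(\Theta_u)_{\#\#} M_t^n = \frac{1}{t}\int_u^{t+u}\delta_{\Phi_s(m^n)}\,ds$ and the resulting total variation bound $2u/t$, then pass to the limit and conclude via a countable density argument in $u$. The only packaging difference is that the paper works with the single bounded continuous functional $M\mapsto d_L(M,(\Theta_u)_{\#\#}M)$ (Lévy--Prokhorov distance bounded by total variation) in place of your countable separating family $\{\varphi_j\}$; this spares one layer of countable intersection but is otherwise equivalent. Your explicit verification of the $\sW_p$-continuity of $u\mapsto (\Theta_u)_\#\rho$ is a detail the paper leaves implicit.
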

\begin{proof}
  Similarly to the proof of Lem.~\ref{lem:ergodic}, we assume without restriction
  that $\Upsilon = \lim_{n\to\infty} (M_{t_n}^n)_\#\bP$ in the weak$\star$ sense.
  Set $t>0$. The map $M\mapsto d_L(M,(\Theta_t)_{\#\#}M)$ is continuous on $\cP(\cP_p(\cC))$,
  where we recall that $d_L$ stands for the Lévy-Prokhorov distance. Thus, by Fatou's lemma,
  \begin{align}
    \label{eq:dL-shift}
    \int d_L(M,(\Theta_t)_{\#\#}M)d\Upsilon(M) &\leq  \limsup_{n\to\infty} \bE(d_L(M_{t_n}^n,(\Theta_t)_{\#\#}M_{t_n}^n))\,.
  \end{align}
Note that: 
\begin{align*}
  (\Theta_t)_{\#\#}M_{t_n}^n &= \frac 1{t_n}\int_t^{t+t_n}\delta_{(\Theta_{s})_\#m^n} ds\,.
\end{align*}
In particular, for every Borel set $A\subset \cP_p(\cC)$,
$| (\Theta_t)_{\#\#}M_{t_n}^n (A)-M_{t_n}^n(A)|\leq 2t/t_n$.
The Lévy-Prokhorov distance being bounded by the total variation distance,
$d_L(M_{t_n}^n,(\Theta_t)_{\#\#}M_{t_n}^n)\leq 2t/t_n$ which tends to zero.
The l.h.s. of Eq.~(\ref{eq:dL-shift}) is zero, which proves the statement for a fixed value of $t$.
The proof of the statement for all $t$, is easily concluded by a using dense denumerable subset argument.
\end{proof}
Define:
$
\cB_p = \{M\in \cP(\cP_p(\cC))\,:\, M(\BC_p)=1\}\,.
$
\begin{prop}\label{prop:BC-2}
  For every $\Upsilon\in\mathscr{M}$, $\Upsilon(\cB_p)=1$.
\end{prop}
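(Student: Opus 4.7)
The plan is to combine the two preceding lemmas with Poincaré's recurrence theorem. By Lemmas~\ref{lem:ergodic} and~\ref{lem:invariant}, for $\Upsilon$-almost every $M \in \cP(\cP_p(\cC))$, we have $M(\sV_p)=1$ and $M = (\Theta_t)_{\#\#} M$ for every $t > 0$. Since $\sV_p$ is a Polish space (Prop.~\ref{prop:Vp}) carrying the continuous semi-flow $\Phi$, such an $M$ can be viewed as a $\Phi$-invariant Borel probability measure on $\sV_p$.

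The core step will be to show that any $\Phi$-invariant Borel probability measure on $\sV_p$ is concentrated on the set of recurrent points $\cR := \{\rho\in\sV_p : \rho\in \omega(\rho)\}$. Fixing a countable basis $\{U_j\}_j$ of open sets for $\sV_p$ (which exists since $\sV_p$ is Polish), I would define, for each $j$ and each integer $m\ge 1$,
$$
A_{j,m} := \{\rho\in U_j : \Phi_t(\rho)\notin U_j \text{ for all } t\ge m\}.
$$
These sets are Borel by joint continuity of $(t,\rho)\mapsto \Phi_t(\rho)$. By the standard Poincaré argument applied to the measure-preserving map $\Phi_m$, the iterated preimages $\Phi_m^{-k}(A_{j,m})$, $k\ge 0$, are pairwise disjoint, all of equal $M$-measure, so $M(A_{j,m})=0$. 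Taking the countable union over $j,m$ and passing to the complement yields $M(\cR)=1$, since $\rho\in\cR$ iff every basic neighborhood of $\rho$ is revisited by the orbit at arbitrarily large times.

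Because $\BC_p$ is the closure of $\cR$, the inclusion $\cR\subset \BC_p$ gives $M(\BC_p)=1$ for $\Upsilon$-almost every $M$; that is, $\Upsilon(\cB_p)=1$. (A brief measurability check is needed to confirm that $\cB_p$ is a Borel subset of $\cP(\cP_p(\cC))$, which follows from the fact that $M\mapsto M(\BC_p)$ is Borel since $\BC_p$ is a closed, hence Borel, subset of $\sV_p$.)

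The only nontrivial step is the Poincaré recurrence argument in the semi-flow setting on a Polish space, but this is classical: the only slight subtlety is that $\Phi$ is merely a semi-flow rather than a flow, which is harmless because the recurrence argument uses only preimages under $\Phi_m$, not inverses. Once recurrence is established, the conclusion is immediate from the definition of the Birkhoff center.
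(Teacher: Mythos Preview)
Your argument is correct and follows essentially the same route as the paper: combine Lemmas~\ref{lem:ergodic} and~\ref{lem:invariant} to view $M$ as a $\Phi$-invariant Borel probability on the Polish space $\sV_p$, and then invoke Poincar\'e recurrence to obtain $M(\BC_p)=1$. The only difference is that the paper cites the recurrence theorem directly (Theorem~2.3 in Ma\~n\'e's book), whereas you spell out the standard countable-basis argument; the added measurability remark about $\cB_p$ is a nice touch.
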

\begin{proof}
Consider an arbitrary sequence of the form $((M_{t_n}^n)_\#\bP)$ where $t_n\to\infty$,
converging in distribution to some measure $\Upsilon\in \mathscr{M}$ as $n\to\infty$.
By Lem.~\ref{lem:invariant}, the map $(\Theta_t)_\#:\cP_p(\cC)\to\cP_p(\cC)$ preserves the measure $M$, for all $M$ $\Upsilon$-a.e.,
and for all $t$. By Lem.~\ref{lem:ergodic}, $M(\sV_p)=1$.
Thus, the restriction of the map $(\Theta_t)_\#$ to $\sV_p$, still denoted by $(\Theta_t)_\#:\sV_p\to\sV_p$ preserves the measure $M$ as well,
for all $M$ $\Upsilon$-a.e..
By the Poincaré recurrence theorem, stated in Th. 2.3 of~\cite{mane}, it follows that $M(\BC_p)=1$ for all $M$ $\Upsilon$-a.e.
\end{proof}
\begin{proof}[Proof of Th.~\ref{th:BC}]
By Lemma~\ref{lem:tighM}, the set $\mathscr{M}$ is non-empty. Consequently, by Proposition~\ref{prop:BC-2}, $\cB_p$ is non-empty, which implies that $\BC_p$ is also non-empty.
To conclude, assume by contradiction that the conclusion of Th.~\ref{th:BC} does not hold.
Then, there exists $\varepsilon>0$ and a sequence, which, without restriction, we may assume to have the form
$((M_{t_n}^n)_\#\bP)$, such that for all $n$ large enough,
\begin{equation}
\bE(\ps{\sW_p(\,.\,,\BC_p),M_{t_n}^n})>\varepsilon\,,\label{eq:contr}
\end{equation}
where  $\ps{\,.\,,\,.\,}$ is the natural dual pairing on $C_b(\cP_p(\cC))\times \cP(\cP_p(\cC))$.
Using Lem.~\ref{lem:tighM}, one can extract an other subsequence, which we still denote by $((M_{t_n}^n)_\#\bP)$,
converging to $\Upsilon\in \mathscr{M}$. As a consequence,
$$
\lim_{n\to\infty}\bE((\sW_p(\,.\,,\BC_p),M_{t_n}^n)) = \int\int \sW_p(m,\BC_p)dM(m)d\Upsilon(M) = 0\,,
$$
where we used the fact that, due to Prop.~\ref{prop:BC-2}, $\int \sW_p(m,\BC_p)dM(m)=0$ for $\Upsilon$-amost all $M$.
This contradicts Eq.~(\ref{eq:contr}).
\end{proof}

\subsection{Proof of Corollary~\ref{cor:S}}\label{sec:ergodicCO}
{Throughout this paragraph, we assume that $1\le p\le 2$.
We define the functions, for $\mu\in\cP_p(\bR^d)$,
\[
  g_1(\mu) := W_p(\mu,\BC_p^0)\,,
\]
and
\[
  g_2(\mu) := W_p(\mu, \BC_p^0)^p\,.
\]
Consider the r.v. $$Y_{n,\ell}(s)\eqdef g_\ell \p{\frac 1n\sum_{i=1}^n\delta_{\bar X_s^{i,n}}}\,,$$ for $\ell \in[2]$.
\begin{lemma}\label{lem:Yui}
The r.v. $(Y_{n,\ell}(s):s>0,n\in \bN)$ are uniformly integrable for $\ell\in[2]$.
\end{lemma}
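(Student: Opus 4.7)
The plan is to sandwich $Y_{n,2}(s)$ between a constant and a multiple of the empirical $p$-th norm, then to propagate the uniform integrability of the single-particle $p$-th moment (already available in Lemma~\ref{lem:TMui}) through the empirical average by means of de la Vallée Poussin's criterion combined with Jensen's inequality, and finally to deduce the case $\ell=1$ from the case $\ell=2$ by an elementary comparison.

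First, I would verify that $(\pi_0)_\# \BC_p$ is non-empty so that $g_\ell$ is genuinely finite. Lemma~\ref{lem:tighM} ensures that $\mathscr{M} \neq \emptyset$, so by Prop.~\ref{prop:BC-2} there exists $\Upsilon \in \mathscr{M}$ with $\Upsilon(\cB_p) = 1$, hence some $M$ with $M(\BC_p) = 1$, forcing $\BC_p \neq \emptyset$. Moreover $(\pi_0)_\# \BC_p \subset \cP_p(\bR^d)$, since for any $\rho \in \cP_p(\cC)$ one has $\int \|x_0\|^p d\rho(x) \le \int \sup_{t\in[0,1]}\|x_t\|^p d\rho(x) < \infty$. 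Fixing $\nu_0 \in (\pi_0)_\# \BC_p$ and writing $\mu^n_s := \tfrac 1n\sum_i \delta_{\bar X^{i,n}_s}$, the triangle inequality for $W_p$ and the identity $W_p(\mu^n_s,\delta_0)^p = \tfrac 1n \sum_i \|\bar X^{i,n}_s\|^p$ yield
\[
Y_{n,2}(s) \le W_p(\mu^n_s,\nu_0)^p \le 2^{p-1}\p{\tfrac 1n \sum_{i=1}^n \|\bar X^{i,n}_s\|^p + W_p(\delta_0,\nu_0)^p}.
\]

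Second, I would invoke Lemma~\ref{lem:TMui} with $T=1$ (and $\|x_0\|^p \le \sup_{u\in[0,1]}\|x_u\|^p$) to conclude that the family $(\|\bar X^{1,n}_s\|^p : s>0, n\in\bN^*)$ is uniformly integrable. By the direct sense of de la Vallée Poussin's theorem, there is a non-decreasing convex function $F : \bR_+ \to \bR_+$ with $F(h)/h \to \infty$ and $\sup_{s,n}\bE F(\|\bar X^{1,n}_s\|^p) < \infty$. Using Jensen's inequality for $F$ together with the exchangeability stated in Assumption~\ref{hyp:algo}-\eqref{hypenum: exchangeable},
\[
\bE F\p{\tfrac 1n \sum_{i=1}^n \|\bar X^{i,n}_s\|^p} \le \tfrac 1n \sum_{i=1}^n \bE F(\|\bar X^{i,n}_s\|^p) = \bE F(\|\bar X^{1,n}_s\|^p),
\]
which is bounded uniformly in $s,n$. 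The converse sense of de la Vallée Poussin then yields uniform integrability of the empirical averages $(\tfrac 1n \sum_i \|\bar X^{i,n}_s\|^p)$, and the preceding bound transfers this property to $(Y_{n,2}(s))$.

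Third, to pass from $\ell=2$ to $\ell=1$, I would use that $Y_{n,1}(s) = Y_{n,2}(s)^{1/p}$ with $p\ge 1$, so that for any $a \ge 1$ one has $Y_{n,1}\indicatrice_{Y_{n,1}>a} \le Y_{n,2}\indicatrice_{Y_{n,2}>a^p}$, since $y^{1/p}\le y$ whenever $y\ge 1$. Uniform integrability of $(Y_{n,2})$ therefore passes to $(Y_{n,1})$. The only step requiring some care is the propagation of the single-particle uniform integrability through the empirical mean, which is where the convexity of $F$ and the exchangeability are both essential; the rest is mechanical.
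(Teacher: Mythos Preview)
Your proof is correct and follows essentially the same route as the paper: bound $Y_{n,\ell}(s)$ by a constant plus the empirical $p$-th moment, then pass the uniform integrability from the single particle to the empirical average via de~la~Vall\'ee~Poussin and Jensen combined with exchangeability. The paper handles $\ell=1$ and $\ell=2$ simultaneously through the common bound $Y_{n,\ell}(s)\le C(1+\tfrac 1n\sum_i\|\bar X^{i,n}_s\|^p)$ and appeals directly to Assumption~\ref{hyp:moment} rather than Lemma~\ref{lem:TMui}, but these are cosmetic differences.
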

  \begin{proof}Let $\ell\in [2]$.
   Note that $Y_{n,\ell}(s)\leq C(1+\frac 1n\sum_i\|\bar X^{i,n}_s\|^p)$.
   Hence for a convex, and increasing function $F:\bR_+^*\to\bR$, by the exchangeability stated in Assumption~\ref{hyp:algo}, we obtain $\bE (F(Y_{n,\ell}(s))) \le \bE (F(C(1+\tfrac 1n\sum_i \|\bar X_s^{i,n}\|^p)))\le F(C(1 + \bE(\|\bar X_s^{1,n}\|^p)))$.
 By de la Vallée Poussin theorem, the random variables $(Y_{n,\ell}(s):s>0,n\in \mathbb{N})$ are uniformly integrable if the random variables $(\|{\bar X_s^{1,n}}\|^p :s>0,n\in \mathbb{N})$ are uniformly integrable.
We conclude using Lem.~\ref{lem:moment}.
\end{proof}
Let $\ell\in[2]$, recall the definition of $M_t^n$ in Eq.~(\ref{eq:Mergo}), and recall that $\mathscr{M}$ is the set of cluster
points of $((M_{t}^n)_\#\bP:t\geq 0,n\in \bN^*)$ as  $(t,n)\to(\infty,\infty)$.
Consider an arbitrary sequence $t_n\to\infty$, such that $(M_{t_n}^n)_\#\bP$ converges to some measure $\Upsilon\in \mathscr{M}$.
Consider $\varepsilon>0$. By Lem.~\ref{lem:Yui}, there exists $a>0$ such that $\sup_{n,s}\bE(Y_{n,\ell}(s)\un_{Y_{n,\ell}(s)>a})<\varepsilon$.
Using the inequality $y\leq a\wedge y + y\un_{y>a}$, we obtain:
\begin{align}
  \bE\left(\frac 1{t_n}\int_0^{t_n}Y_{n,\ell}(s) ds\right) &\leq \bE\left(\frac 1{t_n}\int_0^{t_n}a\wedge Y_{n,\ell}(s) ds\right) + \varepsilon \nonumber\\
  &= \bE\left(\int a\wedge g_\ell((\pi_0)_\#m ) dM_{t_n}^n(m)\right) + \varepsilon \label{eq:YasM}
\end{align}
The restriction of $\pi_0$ to $\cP_p(\cC)$, which we still denote by $\pi_0$, is a continuous function
on $(\cP_p(\cC),\sW_p)\to (\cP_p(\bR^d),W_p)$, where $W_p$ represents the $p$-th order Wasserstein
distance on $\cP(\bR^d)$. As a consequence, the pushforward map $(\pi_0)_\#:\cP(\cP_p(\cC))\to\cP(\cP_p(\bR^d))$
is continuous. Therefore, as $(\pi_0)_\#\BC_p$ is non empty by Prop.~\ref{prop:BC-2}, 
the function $M\mapsto \int  a\wedge g_\ell((\pi_0)_\# m) dM(m)$ is bounded and continuous on $\cP(\cP_p(\cC))$.
Recall that $M_{t_n}^n$ converges in distribution to $\Upsilon$, and noting that, by  Prop.~\ref{prop:BC-2},
$$
\int\int g_\ell((\pi_0)_\#m)dM(m)d\Upsilon(M) = 0\,.
$$
Hence, by letting $n\to\infty$ in Eq.~(\ref{eq:YasM}), we obtain $\limsup_n\bE(\tfrac 1{t_n}\int_0^{t_n}Y_n(s) ds)\leq \varepsilon$. As $\varepsilon$
is arbitrary,
\begin{align}
  \lim_{n\to\infty}\bE\left(\frac 1{t_n}\int_0^{t_n}Y_n(s) ds\right) & =0\,. \label{eq:intY}
\end{align}
In order to establish the statement of Corollary~\ref{cor:S}, we now should consider replacing the integral in Eq.~(\ref{eq:intY})
by a sum. This last part is only technical. Recall the definition of $k_t  := \inf \{ k\,:\, \sum_{i=1}^{k}\gamma_{i} \ge t  \}$, and $\tau_k$ in Eq.~\eqref{def:tau}. 
Let $(\alpha_n)$ be a sequence of integers tending to infinity. By the triangular inequality,
\begin{align*}
  \bE\left(\frac{\sum_{l=1}^{\alpha_n}\gamma_lg_\ell(\mu_l^n)}{\sum_{l=1}^{\alpha_n}\gamma_l}\right) 
&= \bE\left(\frac 1{\tau_{\alpha_n}}\int_0^{\tau_{\alpha_n}} g_\ell(\mu_{k_s}^n)ds\right)\\
  &\leq  \bE\left(\frac 1{\tau_{\alpha_n}}\int_0^{\tau_{\alpha_n}} W_p(\mu_{k_s}^n, \frac 1n\sum_{i\in[n]}\delta_{\bar X^{i,n}_s} ) \indicatrice_{\ell=1}\dr s\right)\\
  &+  \bE\left(\frac 1{\tau_{\alpha_n}}\int_0^{\tau_{\alpha_n}}W_p(\mu_{k_s}^n, \frac 1n\sum_{i\in[n]}\delta_{\bar X^{i,n}_s})^p \indicatrice_{\ell=2} ds \right) \\
  &+\bE\left(\frac 1{\tau_{\alpha_n}}\int_0^{\tau_{\alpha_n}} Y_n(s)ds\right)\,.
\end{align*}
The third term in the righthand side of the above inequality tends to zero by Eq.~(\ref{eq:intY}) with $t_n=\tau_{\alpha_n}$.
We should therefore establish that the first and the second term vanish. For an arbitrary integer $l$ and $s\in [\tau_{l},\tau_{l+1}]$,
\begin{align*}
  \esp{W_p\p{\mu_l^n,\frac 1n\sum_{i\in[n]}\delta_{\bar X_s^{i,n}}}}&\leq \bE\left(\left(\frac 1n\sum_{i\in[n]} \|X_l^{i,n}-\bar X_s^{i,n}\|^p\right)^{1/p}\right) \\&\leq (\bE (\|X_l^{1,n}-\bar X_s^{1,n}\|^p))^{1/p}\,.
\end{align*}
where the last inequality uses Jensen's inequality and the exchangeability assumption. Continuing the estimation,
\begin{align*}
  \bE (\|X_l^{1,n}-\bar X_s^{1,n}\|^p) &\leq \bE (\|X_{l+1}^{1,n}-X_l^{1,n}\|^p) \\ 
  & \le \esp{ 3^{p-1}\gamma^p_{l+1}  \norm{b(X_l^{1,n},\mu_l^n)}^p}   \\
  &+ \esp{3^{p-1}\gamma_{l+1}^{p/2} \norm{\xi_{l+1}^{1,n}}^p}+ \esp{3^{p-1}\gamma_{l+1}^p\norm{\zeta_{l+1}^{1,n}}^p}.\\
  &\le C(\gamma_{l+1}^{p/2}  +\gamma_{l+1}^p) ,
\end{align*}
where we used Assumptions~\ref{hyp:b}, and~\ref{hyp:algo}.
Consequently,
\[
  \bE\left(\frac 1{\tau_{\alpha_n}}\int_0^{\tau_{\alpha_n}} W_p(\mu_{k_s}^n,\frac 1n\sum_{i\in[n]}\delta_{\bar X_s^{i,n}})ds\right)  \le \frac{\sum_{l=1}^{\alpha_n}\gamma_l \p{C(\gamma_{l+1}^{p/2}  +\gamma_{l+1}^p)}^{1/p}} {\sum_{l=1}^{\alpha_n}\gamma_l}\,.
\]
and, by the same computation,
\[
  \bE\left(\frac 1{\tau_{\alpha_n}}\int_0^{\tau_{\alpha_n}} W_p(\mu_{k_s}^n,\frac 1n\sum_{i\in[n]}\delta_{\bar X_s^{i,n}})^p ds\right)  \le \frac{\sum_{l=1}^{\alpha_n}\gamma_l \p{C(\gamma_{l+1}^{p/2}  +\gamma_{l+1}^p)}} {\sum_{l=1}^{\alpha_n}\gamma_l}\,.
\]
As Assumption~\ref{hyp:gamma} holds, $C(\gamma_{l+1}^{p/2}  +\gamma_{l+1}^p) \to_{l\to\infty} 0$, and $\sum_{l\ge 1} \gamma_l =\infty$. Therefore, by Stolz-Cesàro theorem, the r.h.s. of the above inequality converges to $0$ when $n\to\infty$.
Hence,
\[
  \lim_{n\to \infty}\bE\left(\frac{\sum_{l=1}^{\alpha_n}\gamma_l g_\ell(\mu_l^n)}{\sum_{l=1}^{\alpha_n}\gamma_l}\right) =0\,,
\]
for an arbitrary sequence $(\alpha_n)$ diverging to $\infty$. By Markov's inequality, Corollary~\ref{cor:S} is proven.}
{
\subsection{Proof of Cor.~\ref{cor:cor2}}
\label{sec:co2}
Let $A\subset \cP_p(\bR^d)$, we define
\[
    {\text{conv}}(A):={\left \{\sum_{i\in[n]}\lambda_i \mu_i \,:\, (\mu_i)_{i\in[n]} \in A^n, \lambda_i >0 ,\sum_{i\in[n]}\lambda_i=1, n\in\bN^* \right \}}\,.
\]
    Let $(\mu_i,\nu_i)_{i\in[2]}$ be measures in $\cP_p(\bR^d)$, and let $\lambda\in[0,1]$. We claim that
    \begin{equation}\label{eq:claim}
    W_p(\lambda\mu_1 + (1-\lambda)\mu_2,\lambda\nu_1 + (1-\lambda)\nu_2)^p \le  \lambda W_p(\mu_1,\nu_1)^p + (1-\lambda) W_p(\mu_2,\nu_2)^p\,.
    \end{equation}
    Indeed let $(\pi_1^\varepsilon,\pi_2^\varepsilon) \in \Pi(\mu_1,\nu_1)\times\Pi(\mu_2,\nu_2)$ satisfying for $i\in[2]$:
    \[
    \abs{\int \norm{x-y}^p \dr\pi_i^\varepsilon(x,y) -W_p(\mu_i,\nu_i)^p} \le \varepsilon \,.
    \]
    Since $\lambda \pi_1^\varepsilon + (1-\lambda)\pi_2^\varepsilon \in \Pi(\lambda \mu_1 + (1-\lambda)\mu_2,\lambda \nu_1 + (1-\lambda)\nu_2) $, we obtain
    \begin{align*}
        &W_p(\lambda\mu_1 + (1-\lambda)\mu_2,\lambda\nu_1 + (1-\lambda)\nu_2)^p \\
        & \le \lambda\int \norm{x-y}^p \dr\pi_1^\varepsilon(x,y) + (1-\lambda) \int \norm{x-y}^p \dr\pi_2^\varepsilon(x,y)\\
        &\le  \lambda W_p(\mu_1,\nu_1)^p + (1-\lambda) W_p(\mu_2,\nu_2)^p + 2\varepsilon\,.
    \end{align*}
Since it is true for every $\varepsilon>0$, this proves our claim.

Now, let $A\subset\cP_p(\bR^d)$, there exists $\nu_1^\varepsilon, \nu_2^\varepsilon\in A$ satisfying
\[
\lambda W_p(\mu_1,A)^p + (1-\lambda) W_p(\mu_2,A)^p \ge \lambda W_p(\mu_1,\nu_1^\varepsilon)^p + (1-\lambda) W_p(\mu_2,\nu_2^\varepsilon)^p - 2\varepsilon\,.
\]
Since this is true for every $\varepsilon>0$, by Eq.~\eqref{eq:claim}:
\begin{equation}\label{eq:convexity}
    \lambda W_p(\mu_1,A)^p + (1-\lambda) W_p(\mu_2,A)^p \ge W_p(\lambda \mu_1 + (1-\lambda)\mu_2, \text{conv}(A) )^p  \,.
\end{equation}
Applying Eq.~\eqref{eq:convexity} to the second claim of Cor.~\ref{cor:S}, we obtain
\[
W_p\p{\frac{\sum_{l\in[k]}\gamma_l\mu_l^n}{n\sum_{l\in[k]}\gamma_l},\text{conv}((\pi_0)_\#(\BC_p))} \xrightarrow[(k,n)\to (\infty,\infty)]{\bP} 0\,.
\]
Since \( W_p(\cdot,\cdot) \geq W_1(\cdot,\cdot) \), we can apply the Kantorovich duality theorem and interchange the \(\inf\) and \(\sup\) to obtain our result.

\subsection{Proof of Cor.~\ref{cor:law}}
\label{sec:cor2}
Let $i\le n$. We define the $\bar I_t^{i,n}$ as
\[
    \bar I_t^{i,n} := (\bar X^{1,n}_t,\dots, \bar X^{i,n}_t)_\#\bP\,. 
\]
Define the measure 
\begin{equation}
    J^{i,n}_t = \frac 1t \int_0^t \delta_{\bar I_s^{i,n}}\dr s\,.
\end{equation}
We define the measure $\tilde J^{i,n}_t \in \cP_p((\bR^d)^i)$
\[
    \tilde J^{i,n}_t(A) := \int \mu(A) \dr J_t^{i,n}(\mu)\,,
\]
for every $A\in\cB((\bR^d)^i)$.
Recalling the definition of $M_t^n$ in Eq.~\eqref{eq:Mergo}, we remark that
\[
    (\pi_0)_{\#\#} M_t^n = \frac 1t \int_0^t \delta_{m^n_s}\dr s\,.
\] 
We define the measure $\tilde M_t^{i,n}\in \cP_p((\bR^d)^i)$ as
\[
\tilde M_t^{i,n}(A):= \bE\p{\int \mu^{\otimes i}(A)\dr (\pi_0)_{\#\#}M_t^n(\mu)}\,,
\]
for every $A\in\cB((\bR^d)^i)$.
\begin{lemma}\label{lem:law}
    There exits a constant $C$, independent of $t$ and $n$, such that
    \[
   \sup_{A\in \cB((\bR^d)^i)} \abs{\tilde M_t^{i,n}(A) - \tilde J^{i,n}_t(A)} \le \frac Cn\,,
    \]
    for every $t,n$.
\end{lemma}
\begin{proof}
Fist we assume that $A=(A_1,\dots,A_i)\in(\cB(\bR^d))^i$.
    Remark that
    \[
        \tilde J^{i,n}_t(A) = \frac 1t \int_0^t \bP((\bar X_s^{1,n},\dots,\bar X_s^{i,n})\in (A_1,\dots,A_i))\dr s\,,
    \]
and
    \[
   \tilde M_t^{i,n}(A) = \frac 1{tn^i} \int_0^t \sum_{j_1,\dots,j_i =1}^{n}  \bP((\bar X_s^{j_1,n}, \dots, \bar X_s^{j_i,n})\in(A_1,\dots, A_i))\dr s\,.
    \]
    By exchangeability, $$\abs{\tilde M_t^{i,n}(A) - \tilde J^{i,n}_t(A)} \le \frac Cn\,,$$ for our specific choice of $A$ and a constant $C$ independent of $A$. We conclude by a density argument. 
\end{proof}

Since the total variation distance is greater than the Lévy-Prokhorov distance denoted by $d_L$, 
by the triangular inequality and Lem.~\ref{lem:law} 
\[
d_L\p{\tilde J^{i,n}_t, (\rho^*_0)^{\otimes i}}\le     d_L\p{\tilde M_t^{i,n}, (\rho^*_0)^{\otimes i}} + \frac Cn\,.
\]
By Assumption~\ref{hyp:sing} and Prop.~\ref{prop:BC-2}, we obtain $\mathscr M =\{ \delta_{\delta_{\rho^*}}\}$. Consequently, for every $(t_n,\varphi_n)\to (\infty,\infty)$, $d_L(\tilde M^{i,\varphi_n}_{t_n}, (\rho^*_0)^{\otimes i} )\to 0$, which means that $\tilde J^{i,n}_t$ converges to $(\rho_0^*)^{\otimes i}$ in $\cP((\bR^d)^i)$. By \cite[Lem. 3.14]{Chaintron_2022}, $J^{i,\varphi_n}_{t_n}$ converges to $\delta_{(\rho_0^*)^{\otimes i}}$ in $\cP(\cP((\bR^d)^{i}))$.

By an application of Prop.~\ref{prop:tight} with Lem.~\ref{lem:meleard}, $\{J_{t_n}^{i,\varphi_n}\,:\, t\ge 0,n\in\bN \}$ is a compact subspace of $\cP(\cP_p((\bR^d)^i))$. Consequently, for every $(t_n,\varphi_n)\to (\infty,\infty)$, $J^{i,\varphi_n}_{t_n}$ converges to $\delta_{(\rho_0^*)^{\otimes i}}$ in $\cP(\cP_p((\bR^d)^{i}))$. The conclusion follows from the same proof as in Cor.~\ref{cor:S}.

\subsection{Proof of Cor.~\ref{cor:essacc}}
\label{sec:cor3}
In the proof of Cor.~\ref{cor:law}, we showed that for every subsequence $(t_n,\varphi_n)\to (\infty,\infty)$, $J^{1,\varphi_n}_{t_n} \to \delta_{\delta_{\rho_0^*}}$.

Let $U\subset\cP_p(\bR^d)$ be an open neighborhood of $\rho_0^*$. 
Then, by the Portmanteau theorem,
\[
    \underset{n\to \infty}{\lim \sup} \,J^{1,\varphi_n}_{t_n}(U) \ge  \underset{n\to \infty}{\lim \inf}\, J^{1,\varphi_n}_{t_n}(U) \ge \un_{\rho_0^*\in U}=1,
\]
for every $(t_n,\varphi_n) \to (\infty,\infty)$.
By similar arguments as in Cor.~\ref{cor:S}, $\rho_0^*$ is an essential accumulation point of $(I_k^{1,n})_{k,n}$.

Let $\tilde \mu$ be a essential accumulation point of $(I^{1,n}_k)_{n,k}$. Then, by similar arguments as in Cor.~\ref{cor:S}, for every open neighborhood $U\subset\cP_p(\bR^d)$ of $\tilde \mu$
\[
    \underset{n\to \infty}{\lim \sup} \,J^{1,\varphi_n}_{t_n}(U) > 0,
\]
for some $(t_n,\varphi_n) \to (\infty,\infty)$.
Assume that $\tilde \mu \neq \rho_0^*$. Define the closed set $F_0 := \{\mu \in\cP_p(\bR^d)\,:\, W_p(\mu,\tilde \mu) \le W_p(\rho_0^*,\tilde \mu)/2 \}$. 
The open set $U_0= \{\mu \in\cP_p(\bR^d)\,:\, W_p(\mu,\tilde \mu) < W_p(\rho_0^*,\tilde \mu)/2 \}$ is a neighborhood of $\tilde \mu$ satisfying $U_0\subset F_0$. Then by Portmanteau theorem
\[
0 = \un_{\rho_0^*\in F_0} \ge \underset{n\to \infty}{\lim \sup}\, J^{1,\varphi_n}_{t_n}(F_0) \ge  \underset{n\to \infty}{\lim \sup}\, J^{1,\varphi_n}_{t_n}(U_0) >  0,
\]
for every $(t_n,\varphi_n) \to (\infty,\infty)$. This contradicts our claim: $\tilde \mu \neq \rho_0^*$.
Consequently, $\rho_0^*$ is the unique accumulation point of $(I^{1,n}_k)_{k,n}$.}
\subsection{Proof of Th.~\ref{th:pwConvergence}}
\label{prf-ptwise} 

We let the assumptions of the theorem hold. 

\begin{lemma}
\label{lem:compact} 
For a nonempty compact set $K\subset \cP_p(\bR^d)$, it holds that 
  \[
   \lim_{t\to\infty} \max_{\nu\in K}W_p(\Psi_t(\nu), A_p) = 0 \,.
  \]
\end{lemma}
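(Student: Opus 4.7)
The plan is to combine the pointwise attraction to $A_p$ with the uniform attraction on the fundamental neighborhood, leveraging continuity of the semi-flow and compactness of $K$ to send every $\nu\in K$ into the fundamental neighborhood within a uniformly bounded time, and then use the semi-flow property $\Psi_{t+s}=\Psi_t\circ\Psi_s$ to conclude.

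Let $\cN$ be a fundamental neighborhood of $A_p$; passing to its interior if needed, I may assume $\cN$ is open. Fix $\varepsilon>0$. By definition of the fundamental neighborhood, there exists $T_1=T_1(\varepsilon)$ such that
$$
\sup_{\mu\in\cN} W_p(\Psi_t(\mu),A_p) < \varepsilon \quad\text{for every } t\ge T_1.
$$
For each $\nu\in K$, the pointwise convergence $W_p(\Psi_t(\nu),A_p)\to 0$ and the openness of $\cN$ yield a time $T_\nu\ge 0$ with $\Psi_{T_\nu}(\nu)\in\cN$. By continuity of $\Psi_{T_\nu}$ (which is part of the definition of a semi-flow), there is an open neighborhood $U_\nu$ of $\nu$ in $\cP_p(\bR^d)$ such that $\Psi_{T_\nu}(U_\nu)\subset\cN$. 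Compactness of $K$ then furnishes a finite subcover $K\subset U_{\nu_1}\cup\cdots\cup U_{\nu_m}$. Set $T_\star:=\max_{1\le i\le m}T_{\nu_i}$.

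Now, for any $t\ge T_\star+T_1$ and any $\nu\in K$, pick $i$ with $\nu\in U_{\nu_i}$. Then $\Psi_{T_{\nu_i}}(\nu)\in\cN$, and the semi-flow identity gives
$$
\Psi_t(\nu)=\Psi_{t-T_{\nu_i}}\bigl(\Psi_{T_{\nu_i}}(\nu)\bigr).
$$
Since $t-T_{\nu_i}\ge T_1$, the uniform convergence on $\cN$ yields $W_p(\Psi_t(\nu),A_p)<\varepsilon$. Hence $\max_{\nu\in K} W_p(\Psi_t(\nu),A_p)\le\varepsilon$ for every $t\ge T_\star+T_1$, and letting $\varepsilon\downarrow 0$ concludes the proof.

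I do not foresee any real obstacle: the argument is a standard topological repackaging of ``pointwise attraction $+$ local uniform attraction $\Rightarrow$ uniform attraction on compact sets''. The only minor point of care is to ensure that $\cN$ can be taken open; this is immediate since any neighborhood contains an open one around $A_p$, which remains a fundamental neighborhood as a subset of the original $\cN$.
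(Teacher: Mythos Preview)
Your proof is correct and relies on the same ingredients as the paper's: continuity of $\Psi_T$ for fixed $T$, compactness of $K$ to land every point in the fundamental neighborhood within bounded time, and then the uniform attraction on that neighborhood together with the semi-flow identity. The only difference is cosmetic---you argue directly via a finite open cover, whereas the paper argues by contradiction through a convergent subsequence of a hypothetical bad sequence $(\nu_n,t_n)$; the two are standard equivalent packagings of the same topological argument.
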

\begin{proof}
Assume for the sake of contradiction that 
\[
\exists \varepsilon >0, \exists (\nu_n) \subset K, 
 \exists (t_n) \to \infty \quad \text{such that} \quad 
     {W_p(\Psi_{t_n}(\nu_n), A_p ) > \varepsilon\,.}
\]
Choose $\delta > 0$ small enough so that the $\delta$--neighborhood
$A_p^\delta$ of $A_p$ for the distance $W_p$ is included in the
fundamental neighborhood of $A_p$. Up to taking a subsequence, we can 
assume by the compactness of $K$ that there exists $\nu_\infty \in K$ such
that $\nu_n \to_n \nu_\infty$. Since $A_p$ is a global attractor, there
exists $T > 0$ such that $W_p( \Psi_T(\nu_\infty), A_p ) \leq \delta / 2$.
Furthermore, by the continuity of $\Psi$, there exists $n_0$ such that 
\[
\forall n \geq n_0, \quad W_p(\Psi_T(\nu_n), \Psi_T(\nu_\infty)) 
 \leq \delta / 2. 
\]
This implies that $\Psi_T(\nu_n) \in A_p^\delta$ for all $n \geq n_0$. 
Since $A_p^\delta$ is included in the fundamental neighborhood of $A_p$, 
there exists $\widetilde T > 0$ such that 
\[
\forall n \geq n_0, \forall t \geq \widetilde T, \quad 
 W_p(\Psi_{\widetilde T + t}(\nu_n), A_p) \leq \varepsilon, 
\]
and we obtain our contradiction. 
\end{proof}

We now prove Th.~\ref{th:pwConvergence}. Recall that the collection
$\{\Phi_t(m^n) \}$ is tight in $\cP_p(\cC)$ by Prop.~\ref{prop:tight}. Let
$(t_n,\varphi_n)$ be a sequence such that $(t_n,\varphi_n) \to_n
(\infty,\infty)$ and such that $(\Phi_{t_n}(m^{\varphi_n}))_n$ converges
in distribution to $M\in \cM$ as given by~\eqref{M-acc}. 
To prove Th.~\ref{th:pwConvergence}, it will be enough to show that 
\[
\forall \delta, \varepsilon > 0, \exists T > 0, \quad 
\limsup_n \bP\left(W_p\left(m^{\varphi_n}_{t_n+T}, A_p\right) 
  \geq \delta\right) \leq \varepsilon. 
\] 
This shows indeed that 
\[
W_p\left( m^n_t,A_p \right)
   \xrightarrow[(t,n)\to (\infty,\infty)]{\bP} 0 , 
\]
and by taking $t = \tau_k$ and by recalling that $m^n_{\tau_k} = \mu_k^n$, 
we obtain our theorem. 

Fix $\delta$ and $\varepsilon$.  By the tightness of the family
$\{\Phi_t(m^n)\}$, there exists a compact set $\cD\subset \cP_p(\cC)$ such that
$\bP(\Phi_t(m^n) \in \cD) \geq 1 - \varepsilon / 2$ for each couple $(t,n)$.
This implies that $M(\cD) \geq 1 - \varepsilon / 2$ by the Portmanteau theorem.
Since $\sV_p$ is closed by Prop.~\ref{prop:limit-process}, the set $\cK = \cD
\cap \sV_p$ is compact in $\cP_p(\cC)$, and by consequence, it is compact in
$\sV_p$ for the trace topology.  By the same proposition, $M(\sV_p) = 1$,
therefore, $M(\cK) \geq 1 - \varepsilon / 2$. 

Since $\cP_p(\cC)$ is Polish, we can apply Skorokhod's representation theorem
\cite[Th.~6.7]{billingsley2013convergence} to the sequence
$(\Phi_{t_n}(m^{\varphi_n}))$, yielding the existence of a probability space
$(\widetilde \Omega, \widetilde \cF, \widetilde \bP)$, a sequence of
$\cP_p(\cC)$--valued random variables $(\tilde m^n)$ on $\widetilde\Omega$ and
a $\cP_p(\cC)$--valued random variable $\tilde m^\infty$ on
$\widetilde\Omega$ such that $(\tilde m^{n})_\#\widetilde\bP =
(\Phi_{t_n}(m^{\varphi_n}) )_\#\bP$, $(\tilde m^\infty)_\#\widetilde \bP = M$,
and $\tilde m^n \to \tilde m^\infty$ pointwise on $\widetilde\Omega$.  Noting
that $m^{\varphi_n}_{t_n+T}$ and $\tilde m^n_T$ have the same probability
distribution as $\cP_p(\bR^d)$--valued random variables, we show that 
\begin{equation}
\label{eq:Ptilde}
 \exists T > 0, \quad 
  \limsup_n \widetilde\bP \left(W_p\left(\tilde{m}^{n}_T, A_p\right) 
  \geq \delta\right) \leq \varepsilon. 
\end{equation}
to establish our theorem. Observing that the function 
$\rho \mapsto (\pi_0)_\# \rho$ is a continuous $\cP(\cC) \to \cP_p(\bR^d)$
function, the set $K = (\pi_0)_\# \cK$ is a nonempty compact set of 
$\cP_p(\bR^d)$. Applying Lem.~\ref{lem:compact} to the semi-flow $\Psi$ and 
to the compact $K$, we set $T > 0$ in such a way that 
\[
\max_{\nu\in K} W_p(\Psi_T(\nu), A_p) \leq \delta / 2. 
\]
By the triangular inequality, we have 
\[
W_p\left(\tilde{m}^{n}_T, A_p\right) \leq 
  W_p\left(\tilde{m}^{n}_T, \tilde{m}^{\infty}_T\right) 
  + W_p\left(\tilde{m}^{\infty}_T, A_p\right).
\]
The first term at the right hand side converges to zero for each
$\tilde\omega\in\widetilde\Omega$ by the continuity of the function $\rho
\mapsto (\pi_T)_\# \rho$, thus, this convergence takes place in probability.
We also know that for $\widetilde\bP$--almost all
$\tilde\omega\in\widetilde\Omega$, it holds that $\tilde m^\infty \in \sV_p$.
Thus, regarding the second term, we have $\tilde{m}^{\infty}_T =
\Psi_T(\tilde{m}^{\infty}_0)$ for these $\tilde\omega$, and we can write    
\begin{multline*}
    \widetilde\bP\left( 
  W_p\left(\tilde{m}^{\infty}_T, A_p\right) \geq\delta 
  \right) \\ \leq 
\widetilde\bP\left( \tilde{m}^{\infty} \not\in \cK \right) 
 + 
 \widetilde\bP\left( 
  \left( 
   W_p\left(\Psi_T(\tilde{m}^{\infty}_0), A_p\right) \geq\delta 
  \right) \cap \left( \tilde{m}^{\infty}_0 \in K \right) \right) .
\end{multline*}

When $\tilde{m}^{\infty}_0 \in K$, it holds that 
$W_p\left( \Psi_T(\tilde{m}^{\infty}_0), A_p\right) \leq \delta / 2$,
thus, the second term at the right hand side of the last inequality is 
zero. The first term satisfies 
$\widetilde\bP\left( \tilde{m}^{\infty} \not\in \cK \right) = 
 1 - M(\cK) \leq \varepsilon / 2$, and the statement~\eqref{eq:Ptilde} 
follows. Th.~\ref{th:pwConvergence} is proven.



\section{Proofs of Sec.~\ref{sec:GM}}
\label{sec:proofGM}


  The Assumptions~\ref{hyp:GM} and $\sigma>0$ are standing in this section.

\subsection{Proof of Prop.~\ref{prop:helmholtz}}

\begin{lemma}
\label{lem:densityBis}
 Let $\rho\in\sV_2$. For every $t>0$, $\rho_t$ admits a density $x\mapsto \varrho(t,x) \in C^1(\bR^d,\bR)$. 
  For every $R>0, t_2>t_1>0$, there exists a constant $C_{R,t_1,t_2}>0$ such that:
  \begin{equation}\label{eq:binfdensity}
  \inf_{t\in[t_1,t_2], \norm{x}\le R}  \varrho(t,x) \ge C_{R,t_1,t_2}  \,,
  \end{equation}
    and there exist a constant $C_{t_1,t_2}>0$, such that
\begin{equation}\label{eq:boundrho}
    \sup_{x\in\bR^d, t\in[t_1,t_2]} \norm{\nabla \varrho(t,x)}  + \varrho(t,x)\le C_{t_1,t_2}\,.
\end{equation}
Finally,
  \begin{equation}
    \label{eq:bsupdensity}
  \sup_{t\in[t_1,t_2]}\int (1+\|x\|^2)\norm{\nabla \varrho(t,x)}\dr x <\infty \,.
  \end{equation}

\end{lemma}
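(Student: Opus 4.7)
The strategy is to recognize any $\rho \in \sV_2$ as the law of a solution to a \emph{linear} SDE with a time-dependent drift, and then to apply classical parabolic regularity to the associated Fokker--Planck equation. By the martingale-problem characterization of $\sV_2$, there exist (on an enlarged probability space) a Brownian motion $W$ and a process $(X_t)$ with law $\rho$ satisfying $dX_t = b_t(X_t)\,dt + \sigma\,dW_t$, where $b_t(x) := -\nabla V(x) - \int \nabla U(x-y)\,d\rho_t(y)$. Once $\rho$ is fixed, $b_t$ is a \emph{deterministic} vector field. Using Assumption~\ref{hyp:GM}(iii)--(iv) together with the uniform second-moment bound $\sup_{t\in[t_1,t_2]}\int \|y\|^2\,d\rho_t(y) < \infty$ (which follows from $\rho \in \cP_2(\cC)$ since $\int \|y\|^2 d\rho_t(y) \leq \int \sup_{s\in[0,T]}\|x_s\|^2 d\rho(x)$), one checks that $b_t$ is jointly continuous in $(t,x)$, has at most linear growth in $x$ uniformly on $[t_1,t_2]$, and is locally $\beta$-Hölder in $x$ uniformly in $t$.

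The marginals $(\rho_t)$ then solve, in the distributional sense, the linear Fokker--Planck equation $\partial_t \rho_t = \sigma^2 \Delta \rho_t - \nabla\cdot(b_t \rho_t)$. Since the diffusion is constant and nondegenerate and $b_t$ is locally Hölder continuous in $x$, the classical Levi parametrix construction (as developed for instance in Friedman's treatise on parabolic PDEs) yields a fundamental solution $p(s,t;y,x)$ on $\{0 \le s < t\}\times\bR^d\times\bR^d$ which is $C^1$ in $x$, continuous in $t$, and admits, on every slab $0 < t_0 \le s < t \le t_2$ and every bounded region in $y$, two-sided Gaussian bounds together with Gaussian-type pointwise bounds on $\nabla_x p$. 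Writing $\varrho(t,x) = \int p(t_1/2,t;y,x)\,d\rho_{t_1/2}(y)$ for $t\in[t_1,t_2]$ (the existence of $\varrho(t_1/2,\cdot)$ coming from the same argument applied with an earlier intermediate time, iterated down to $0$) then yields the $C^1$ regularity of $\varrho(t,\cdot)$ for every $t>0$.

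The four conclusions are read off from these Gaussian estimates. Integrating the Gaussian upper bounds on $p$ and $\nabla_x p$ against the probability $\rho_{t_1/2}$ yields~\eqref{eq:boundrho}. Estimate~\eqref{eq:bsupdensity} follows because $(1+\|x\|^2)\|\nabla_x p(t_1/2,t;y,x)\|$ has Gaussian decay in $x$ uniformly in $y$ on bounded sets, combined with the finite second moment of $\rho_{t_1/2}$ to control the contribution of large $\|y\|$. The lower bound~\eqref{eq:binfdensity} on $\{t\in[t_1,t_2],\ \|x\|\le R\}$ comes from the Gaussian lower bound on $p(t_1/2,t;y,x)$ restricted to $(y,x)$ inside a large ball carrying at least, say, half the mass of $\rho_{t_1/2}$. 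The main technical obstacle is the linear (not bounded) growth of $b_t$, which rules out a direct appeal to the standard Aronson bounds; I would handle this by a cutoff/localization on the drift (replacing $b_t$ by a bounded field outside a large ball and controlling the probability of exiting that ball via $\sup_t \bE\|X_t\|^2<\infty$ and Markov's inequality), or alternatively by a Girsanov reduction to Brownian motion, whose Radon--Nikodym density has finite exponential moments on short time windows (by Novikov's criterion) thanks to the linear growth of $b_t$.
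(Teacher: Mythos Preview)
Your approach is essentially the same as the paper's: freeze $\rho$, view $(\rho_t)$ as the law of a linear SDE with deterministic time-inhomogeneous drift $\tilde b(t,x)=-\nabla V(x)-\int\nabla U(x-y)\,d\rho_t(y)$, verify that $\tilde b$ is locally $\beta$-H\"older in $x$ with linear growth uniformly on $[0,T]$, and then read off the conclusions from two-sided Gaussian-type bounds on the transition density and its gradient.

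The one substantive difference is how the linear-growth obstacle is handled. The paper does not use a cutoff or Girsanov argument; instead it directly invokes \cite[Th.~1.2]{menozzi2021density}, which gives, for nondegenerate diffusions with H\"older drifts of at most linear growth, bounds of the form
\[
C^{-1}t^{-d/2}\int e^{-\|x-\theta_t(y)\|^2/(\lambda t)}\,d\rho_0(y)\le \varrho(t,x)\le C\,t^{-d/2}\int e^{-\lambda\|x-\theta_t(y)\|^2/t}\,d\rho_0(y),
\]
together with an analogous bound on $\|\nabla\varrho(t,x)\|$, where $\theta_t(y)$ is the flow of the ODE $\dot\theta_t=\tilde b(t,\theta_t)$, $\theta_0=y$. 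A Gr\"onwall bound $\|\theta_t(y)\|\le C_T(1+\|y\|)$ then converts these into the three stated estimates. This is cleaner than your proposed workarounds: a truncation argument would require extra care to recover the uniform \emph{lower} bound~\eqref{eq:binfdensity}, and Girsanov gives equivalence of laws but not directly the pointwise $C^1$ density estimates you need.
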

\begin{proof}
  The result is an application of Th.1.2 in~\cite{menozzi2021density} with the non homogeneous vector field
  $\tilde b(t,x) :=\int  b(x,y)\dr \rho_t(y)$. The proof consists in verifying the conditions of the latter theorem.
By Assumption~\ref{hyp:GM}, for every $(x,y,T)\in(\bR^d)^2\times  \bR_+$,
\begin{align*}
\sup_{t\in[0,T]}\norm{\tilde b(t,x) -\tilde b(t,y)} &\le \norm{\nabla V(x) -\nabla V(y)} \\ &\phantom{\le} + \sup_{t\in[0,T]}\int \norm{\nabla U(x-z) -\nabla U(y-z)}\dr\rho_t(z)\\ 
                                                    &\le  C (\norm{x-y}^\beta\vee \norm{x-y})\,,
\end{align*}
Moreover,
\begin{equation}
  \label{eq:borne-bt}
   \sup_{t\in[0,T]}\tilde b(t,x) \le C (1+\norm{x} + \int \sup_{t\in[0,T]} \norm{y_t} \dr \rho(y))\le C (1+\norm{x}) \,.
\end{equation}
As $\sigma>0$,  \cite[Th. 1.2]{menozzi2021density} applies: $\rho$ admits a density $x\mapsto \varrho(t,x)\in C^1(\bR^d)$, for $0<t\le T$,
and there exists four constants $(C_{i,T}, \lambda_{i,T})_{i\in[2]}$, such that:
\begin{gather*}
  \frac {1} { C_{1,T}t^{d/2}} \int \exp\p{-\frac {\norm{x-\theta_t(y)}^2}{\lambda_{1,T} t} }\dr\rho_0(y) \le {\varrho(t,x)} \\
 {\varrho(t,x)} \le   \frac {C_{1,T}}{ t^{d/2}}\int \exp\p{-\frac {\lambda_{1,T}}{t} \norm{x-\theta_t(y)}^2}\dr\rho_0(y) \\
 \norm{\nabla \varrho(t,x)} \le \frac {C_{2,T}}{t^{(d+1)/2}}\int \exp\p{-\frac {\lambda_{2,T}}{ t} \norm{x-\theta_t(y)}^2}\dr\rho_0(y)\,,
\end{gather*}
where the map $t\mapsto \theta_t(y)$ is a solution to the ordinary differential equation:
$\dd{\theta_t(y)}{t} = \tilde b(t,\theta_t(y))$ with initial condition $\theta_0(y) = y$. 
By Grönwall's lemma and Eq.~(\ref{eq:borne-bt}), there exists a constant $C_T$ such that $\|\theta_t(y)\| \le C_T\|y\|$, for every $n,y$, and $t\le T$. 
For every $t_1\le t\le t_2$, and every $x$, we obtain using a change of variables:
\begin{multline*}
(C_{1,t_2} {t_1}^{d/2})^{-1}\ge {\varrho(t,x)} \\ \ge  C_{1,t_2}{ t_2^{-d/2}}\exp\p{-\frac 2{\lambda_{1,t_2}t_1}\|x\|^2}\int \exp\p{ -\frac {2C_{t_2}}{\lambda_{1,t_2}t_1} \norm{y}^2}\dr\rho_0(y) 
\end{multline*}
\begin{multline*}
 \int (1+\|x\|^2)\norm{\nabla \varrho(t,x)} \dr x \\ \le  C_{2,t_2}{t_1^{-(d+1)/2}}\int (1+2\|x\|^2+2C_{t_2}^2\int \|y\|^2d\rho_0(y))\exp\p{- \lambda_{2,t_2} t_2^{-1} \norm{x}^2}\dr x\,, 
\end{multline*}
and $\norm{\nabla \varrho(t,x)} \le C_{2,t_2} t_1^{-(d+1)/2}\,.$
Consequently, $\rho$ satisfies Eq.~\eqref{eq:binfdensity}, Eq.~\eqref{eq:boundrho} and Eq.~\eqref{eq:bsupdensity}.
\end{proof}

For every $\rho\in \sV_2$ and every $t>0$, recall the definition of the velocity field $v_t$ in Eq.~(\ref{eq:vt}):
$ v_t(x) := -\nabla V(x)-\int \nabla U(x,y)d\rho_t(y)-\sigma^2\nabla \log\varrho(t,x)$,
where $\varrho(t,x)$ is the density of $\rho_t$ defined in Lem.~\ref{lem:densityBis}.
\begin{lemma}
  \label{lem:continuity}
  For every $\rho\in \sV_2$, and every ${t_2}>{t_1}>0$,
  \begin{equation}
  \int_{t_1}^{t_2}\int\|v_t(x)\|d\rho_t(x)dt<\infty\,.\label{eq:vt-int}
\end{equation}
  Moreover, for every $\psi\in C_c^\infty(\bR_+\times\bR^d,\bR)$,
  \begin{multline} 
  \int\psi({t_2},x)d\rho_{t_2}(x) - \int\psi({t_1},x)d\rho_{t_1}(x) \\=
  \int_{t_1}^{t_2} \int (\partial_t\psi(t,x) +  \ps{\nabla_x\psi(t,x),v_t(x)}) \rho_t(dx)dt\,.\label{eq:continuity}
\end{multline}
\end{lemma}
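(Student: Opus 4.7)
My plan is to derive the continuity equation from the martingale characterization of $\sV_2$ in Def.~\ref{def:V}, converting the Laplacian into a score term via integration by parts, using the density provided by Lem.~\ref{lem:densityBis}.

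For the integrability~\eqref{eq:vt-int}, I split $v_t(x) = b(x,\rho_t) - \sigma^2 \nabla \log \varrho(t,x)$. The drift contribution satisfies $\int \|b(x,\rho_t)\| \, d\rho_t(x) \le C(1 + 2 \int \|x\| \, d\rho_t(x))$ by the linear growth of $\nabla V$ and $\nabla U$ in Assumption~\ref{hyp:GM}, and this is bounded uniformly in $t \in [t_1,t_2]$ since $\rho \in \cP_2(\cC)$. The score contribution equals $\sigma^2 \int \|\nabla \varrho(t,x)\| \, dx$, which is uniformly bounded in $t \in [t_1,t_2]$ by~\eqref{eq:bsupdensity}. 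Integration over the finite interval $[t_1,t_2]$ yields~\eqref{eq:vt-int}.

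For~\eqref{eq:continuity}, I first treat the time-independent case $\psi(t,x) = \phi(x)$ with $\phi \in C_c^2(\bR^d)$. Applying Def.~\ref{def:V} in the granular-media setting $\sigma(x,y) = \sigma I_d$, so that $L(\rho_u)(\phi)(x) = \langle b(x,\rho_u), \nabla\phi(x)\rangle + \sigma^2 \Delta \phi(x)$, and taking expectation under $\rho$ with Fubini yields
\[
\int \phi \, d\rho_{t_2} - \int \phi \, d\rho_{t_1} = \int_{t_1}^{t_2} \int \bigl( \langle b(x,\rho_u), \nabla\phi(x)\rangle + \sigma^2 \Delta \phi(x) \bigr) \, d\rho_u(x) \, du.
\]
Because $\varrho(u,\cdot) \in C^1(\bR^d)$ by Lem.~\ref{lem:densityBis} and $\phi$ has compact support, integration by parts in space gives $\int \sigma^2 \Delta \phi \, d\rho_u = -\sigma^2 \int \langle \nabla \phi, \nabla \log \varrho(u,\cdot)\rangle \, d\rho_u$, with vanishing boundary terms. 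Combining, the integrand reduces to $\int \langle v_u, \nabla \phi\rangle \, d\rho_u$, which is~\eqref{eq:continuity} for time-independent $\psi$.

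To extend to general $\psi \in C_c^\infty(\bR_+ \times \bR^d)$, by linearity and density (approximation by finite sums $\sum_k f_k(t) g_k(x)$ with uniformly compact supports), it suffices to handle tensor products $\psi(t,x) = f(t) g(x)$. The previous step applied on arbitrary subintervals of $[t_1,t_2]$ shows that $u \mapsto \int g \, d\rho_u$ is absolutely continuous with a.e. derivative $u\mapsto \int \langle v_u, \nabla g\rangle \, d\rho_u$, which is integrable on $[t_1,t_2]$ by~\eqref{eq:vt-int}. Integration by parts in time then gives the desired identity for $\psi = f g$, and the density argument concludes. The main obstacle is the spatial integration by parts: it relies on the $C^1$ regularity of $\varrho$ and the integrability bound~\eqref{eq:bsupdensity}, without which $\nabla \log \varrho$ would not be pointwise defined nor integrable against $\rho_u$. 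Once these are in hand, the time-integration step is routine.
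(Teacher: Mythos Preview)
Your proof is correct and follows essentially the same approach as the paper: both derive the identity with the Laplacian from the martingale characterization (Def.~\ref{def:V} / Eq.~\eqref{eq:G} with trivial $h_j$), pass to tensor products $f(t)g(x)$ and then to general $\psi$ by density, and convert the Laplacian into the score term by spatial integration by parts using the $C^1$ regularity from Lem.~\ref{lem:densityBis}. The only cosmetic difference is that you perform the spatial integration by parts before the density extension while the paper does it after, and you make the time-integration-by-parts step (passing from $\phi$ to $f(t)\phi(x)$) explicit whereas the paper leaves it implicit.
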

\begin{proof}
The first point is a direct consequence of Lem.~\ref{lem:densityBis}.
  Consider $\phi\in C_c^\infty(\bR^d, \bR)$ and $\eta\in C^\infty_c(\bR_+,\bR)$.
  Using Eq.~\eqref{eq:G} and~(\ref{eq:Vp}) with $h_1=\cdots=h_r=1$, we obtain that for each $\psi\in C_c^\infty(\bR_+\times\bR^d,\bR)$
  of the form $\psi(t,x) =  g(t)\phi(x)$,
  \begin{multline}\label{eq:ito}
  \int\psi({t_2},x)d\rho_{t_2}(x) - \int\psi({t_1},x)d\rho_{t_1}(x) = \\
  \int_{t_1}^{t_2} \int (\partial_t\psi(t,x) +   \ps{\nabla\psi(s,x),b(x,\rho_t)} + \sigma^2\Delta\psi(t,x)) \rho_t(dx)dt\,.
\end{multline}
As the functions of the form $(t,x)\mapsto g(t)\phi(x)$ are dense in  $C_c^\infty(\bR_+\times\bR^d,\bR)$, Eq.~(\ref{eq:ito}) holds in fact
for any smooth compactly supported $\psi$. Using Lem.~\ref{lem:densityBis} and an integration by parts of the Laplacian term, Eq.~(\ref{eq:continuity}) follows.
\end{proof}

The goal now is to establish that the functional $\mathscr H$ 
is a Lyapunov function.
This claim will follow from the application of Eq.~(\ref{eq:continuity}) to the functional
$(t,x)\mapsto \sigma^2\log(\varrho(t,x)) + V(x) + \int U(x-y) \varrho(t,y)  \dr y$.
However, this function is not necessarily smooth nor compactly supported. In order to be able to apply Lem.~\ref{lem:continuity},
mollification should be used. In the sequel, consider two fixed positive numbers $t_2>t_1$.

Define a smooth, compactly supported, even function $\eta:\bR^d\to\bR_+$
such that $\int \eta(x)dx=1$, and
define $\eta_\varepsilon(x) := \varepsilon^{-d}\eta(x/\varepsilon)$
for every $\varepsilon>0$. For every $t>0$, we introduce
the density $\varrho_\epsilon(t,\cdot) := \eta_\varepsilon *
\rho_\epsilon(t,\cdot)$, and we denote by
$\rho^\varepsilon_t(dx) = \varrho_\epsilon(t,x)dx$ the corresponding
probability measure.  Finally, we define:
$$
v_t^\varepsilon := \frac{\eta_\varepsilon * (v_t\varrho(t,\cdot))}{\varrho_\epsilon(t,\cdot)}\,.
$$
With these definitions at hand, it is straightforward to check that the statements of Lem.~\ref{lem:continuity}
hold when $\rho_t,v_t$ are replaced by $\rho_t^\varepsilon,v_t^\varepsilon$. More specifically,
we shall apply Eq.~(\ref{eq:continuity}) using a specific smooth function $\psi = \psi_{\varepsilon,\delta,R}$, which we will
define hereafter for fixed values of $\delta,R>0$, yealding our main equation:
\begin{multline}
  \label{eq:continuityBis}
  \int\psi_{\varepsilon,\delta,R}({t_2},x)\varrho_\varepsilon(t_2,x)dx - \int\psi_{\varepsilon,\delta,R}({t_1},x)\varrho_\varepsilon(t_1,x)dx =\\
  \int_{t_1}^{t_2} \int (\partial_t\psi_{\varepsilon,\delta,R}(t,x) +  \ps{\nabla \psi_{\varepsilon,\delta,R}(t,x),v_t^\varepsilon(x)}) \varrho_\varepsilon(t,x)dx dt\,.
\end{multline}
We now provide the definition of the function $\psi_{\varepsilon,\delta,R}\in C_c^\infty(\bR_+\times\bR^d,\bR)$ used in the above equality.
Let $\theta\in C_c^\infty(\bR,\bR)$ be a nonnegative function supported by the interval $[-t_1,t_1]$ and satisfying
$\int \theta(t)dt=1$. For every $\delta\in (0,1)$, define $\theta_\delta(t) = \theta(t/\delta)/\delta$.
We define $\varrho^{\varepsilon,\delta}(\cdot,x) := \theta_\delta* \varrho^{\varepsilon}(\cdot,x)$.
The map $t\mapsto \varrho^{\varepsilon,\delta}(t,\dot)$ is well defined on $[t_1,t_2]$, non negative, and smooth in both variables $t,x$.
In addition, we define $V_\varepsilon := \eta_\varepsilon * V$,
$U_\varepsilon := \eta_\varepsilon * U$. Finally, we introduce a smooth function $\chi$ on $\bR^d$ equal to one on the unit ball and to zero outside the ball of radius $2$,
and we define $\chi_R(x) := \chi(x/R)$. For every $(t,x)\in [t_1,t_2]\times \bR$, we define:
\begin{equation}\label{eq:psi}
     \psi_{\varepsilon,\delta,R}(t,x) :=(\sigma^2\log \varrho^{\varepsilon,\delta}(t,x) + V_\varepsilon(x)+ \int U_\varepsilon(x-y )\chi_R(y) \varrho^{\varepsilon,\delta}(t,y) \dr y) \chi_R(x)\,.
\end{equation}
We extend $\psi_{\varepsilon,\delta,R}$ to a smooth compactly supported function on $\bR_+\times \bR^d$, and we apply Eq.~(\ref{eq:continuityBis}) to the latter.
We now investigate the limit of both sides of the equality~(\ref{eq:continuityBis}) as $\delta,\varepsilon,R$ successively tend to $0,0,\infty$.
First consider the lefthand side. Note that for all $t\in [t_1,t_2]$,
\begin{multline*}
\lim_{ \varepsilon \to 0} \lim_{\delta\to 0}\psi_{\varepsilon,\delta,R}(t,x)\varrho_\varepsilon(t,x) \\:=\p{ \sigma^2\log \varrho(t,x) + V(x)+ \int U(x-y )\chi_R(y) \varrho(t,y) \dr y}\varrho(t,x) \chi_R(x)\,.
\end{multline*}
The domination argument that allows to interchange limits and integrals is provided by Lem~\ref{lem:densityBis}.
Indeed, for a fixed $R>0$, there exists a constant $C_R$ such that  $\varrho^{\varepsilon,\delta}(t,x)\leq C_R$ and
$\psi_{\varepsilon,\delta,R}(t,x)\leq C_R$ for all $\|x\|\leq R$ and all $t\in [t_1,t_2]$. As a consequence,
\begin{multline*}
  \lim_{ \varepsilon \to 0} \lim_{\delta\to 0}\int \psi_{\varepsilon,\delta,R}(t,x)\varrho_\varepsilon(t,x) =
  \sigma^2\int\chi_R(x) \varrho(t,x)\log \varrho(t,x) dx + \\ \int V(x) \chi_R(x) d\rho_t(x) + \int U(x-y )\chi_R(y)\chi_R(x) \varrho(t,x)\varrho(t,y)dx dy \,.
\end{multline*}
Since $\rho_t\in\cP_2(\bR^d)$, $\int\varrho(t,x)|\log \varrho(t,x)| dx<\infty$, and the first term in the r.h.s. of the above equation converges to $\sigma^2\int\varrho(t,x)\log \varrho(t,x) dx$ as $R\to\infty$.
Similarly, $\int V(x) \chi_R(x) d\rho_t(x)$  tends to $\int Vd\rho_t$ as $R\to\infty$, by use of the linear growth condition on $\nabla V$ in Assumption~\ref{hyp:GM},
along with the fact that $\rho_t$ admits a second order moment. The same holds for the last term. Finally, we have shown that, for every $t\in [t_1,t_2]$,
$$
\lim_{R\to\infty} \lim_{ \varepsilon \to 0} \lim_{\delta\to 0} \int\psi_{\varepsilon,\delta,R}(t,x)\varrho_\varepsilon(t,x)dx
  = \mathscr H(\rho_t)   + \frac 12\int\!\!\!\int U(x-y)\dr \rho_t(y)d\rho_t(x)\,,
  $$
recalling
$\mathscr H(\rho_t) := \sigma^2\int \log \varrho(t,\cdot)d\rho_t + \int V d\rho_t + \frac 12 \int\!\!\!\int U(x-y)\dr \rho_t(y)d\rho_t(x)\,.$
As $\delta,\varepsilon,R$ successively tend to $0,0,\infty$, we have shown that the l.h.s. of Eq.~(\ref{eq:continuityBis}) converges to:
\begin{multline}
  \mathscr H(\rho_{t_2}) - \mathscr H(\rho_{t_1})\\  + \frac 12\int\!\!\!\int U(x-y)\dr \rho_{t_2}(y)d\rho_{t_2}(x)- \frac 12\int\!\!\!\int U(x-y)\dr \rho_{t_1}(y)d\rho_{t_1}(x).
  \label{eq:lhs-helm}
\end{multline}
We should now identify the above term with the limit of the r.h.s. of Eq.~(\ref{eq:continuityBis}) in the same regime.
The latter is composed of two terms. First consider the second term:
\begin{multline*}
\int_{t_1}^{t_2} \int \ps{\nabla \psi_{\varepsilon,\delta,R}(t,x),v_t^\varepsilon(x)} \rho_t^\varepsilon(dx)dt
\\=  \int_{t_1}^{t_2} \int \ps{\nabla \psi_{\varepsilon,\delta,R}(t,x),\eta_\varepsilon * (v_t(x)\varrho(t,x))} dx dt \,.
\end{multline*}
We can let $\delta\to 0$ in this equation and interchange the limit and the integral. This is justified by Lem.~\ref{lem:densityBis}, which implies that
for every $R>0$, there exists a constant $C_{R}$ such that for every $\varepsilon>0$,  $\delta\in (0,1)$,  $t\in [t_1,t_2]$, $x\in \bR^d$,
\begin{equation}\label{eq:boundnabla}
\norm{\nabla \psi_{\varepsilon,\delta,R}(t,x)}  \le C_{R}\,.
\end{equation}
Using Eq.~(\ref{eq:boundnabla}) along with Eq.~(\ref{eq:vt-int}), the dominated convergence applies. Letting $\varepsilon\to 0$ in a second step,
the exact same argument applies, and we obtain:
\begin{align*}
\lim_{\varepsilon\to 0}\lim_{\delta\to 0}\int_{t_1}^{t_2} \int & \ps{\nabla \psi_{\varepsilon,\delta,R}(t,x),v_t^\varepsilon(x)} \varrho_\varepsilon(t,x)dx dt \\
  &=  \int_{t_1}^{t_2} \int \lim_{\varepsilon\to 0}\lim_{\delta\to 0} \ps{\nabla \psi_{\varepsilon,\delta,R}(t,x),\eta_\varepsilon * (v_t(x)\varrho(t,x))} dx dt \\
 &=  \int_{t_1}^{t_2} \int  \ps{\nabla (\lim_{\varepsilon\to 0}\lim_{\delta\to 0}\psi_{\varepsilon,\delta,R}(t,x)),v_t(x)} \varrho(t,x)dx dt \,,
\end{align*}
where the interchange between $\nabla$ and the limits is again a consequence of Lem.~\ref{lem:densityBis}.
We now write the gradient in the above inner product. Note that:
$$
\lim_{\varepsilon\to 0}\lim_{\delta\to 0}\psi_{\varepsilon,\delta,R}(t,x) = 
(\sigma^2\log \varrho(t,x) + V(x)+ \int U(x-y )\chi_R(y) \varrho(t,y) \dr y) \chi_R(x)\,.
$$
We obtain: 
\begin{multline}
  \label{eq:psi-nablabla}
\lim_{\varepsilon\to 0}\lim_{\delta\to 0}\int_{t_1}^{t_2} \int \ps{\nabla \psi_{\varepsilon,\delta,R}(t,x),v_t^\varepsilon(x)} \varrho_\varepsilon(t,x)dx dt =
\\ - \int_{t_1}^{t_2} \int  \|v_t(x)\|^2\chi_R(x) \varrho(t,x)dx dt \\
-\int_{t_1}^{t_2} \int \ps{v_t(x),\int(1-\chi_R(y))\nabla U(x-y)d\rho_t(y)}\chi_R(x)d\rho_t(x) \\
-\int_{t_1}^{t_2} \int \ps{v_t(x), \nabla \chi_R(x)(V(x)+\int U(x-y)\chi_R(y)d\rho_t(y))}d\rho_t(x)\,.
\end{multline}
By the dominated convergence theorem, Assumption~\ref{hyp:GM} and Eq.~(\ref{eq:bsupdensity}),
the last two terms in the r.h.s. of Eq.~(\ref{eq:psi-nablabla}) tend to zero as $R\to\infty$,
while the first term is handled by the monotone convergence theorem. We thus obtain:
\begin{multline}
  \label{eq:psi-nabla}
\lim_{R\to\infty}\lim_{\varepsilon\to 0}\lim_{\delta\to 0}\int_{t_1}^{t_2} \int \ps{\nabla \psi_{\varepsilon,\delta,R}(t,x),v_t^\varepsilon(x)} \varrho_\varepsilon(t,x)dx dt 
 \\= - \int_{t_1}^{t_2} \int  \|v_t(x)\|^2 \varrho(t,x)dx dt \,.
\end{multline}
As a last step, we should evaluate the limit of the first term in the r.h.s. of Eq.~(\ref{eq:continuityBis}), which writes:
$
  \int_{t_1}^{t_2} \int \partial_t\psi_{\varepsilon,\delta,R}(t,x) \varrho_\varepsilon(t,x)dx dt\,.
$
Here the domination argument allowing to interchange limits and integrals requires more attention,
and is justified by the following lemma,
whose proof is provided at the end of the section.
\begin{lemma}\label{lem:boundcdom}
Let $t_2>t_1>0$ be fixed. For every $R,\varepsilon>0$, there exists a constant $C_{R,\varepsilon}$ such that for every $\delta\in (0,1)$, $t\in [t_1,t_2]$,  $x\in \bR^d$,
        \begin{equation}\label{eq:bounddt}
   \abs{\6_t \psi_{\varepsilon,\delta,R}(t,x)} \le C_{R,\varepsilon}\,,
   \end{equation}
   for every $t\le T$, $\delta>0$, and every $x\in\bR^d$. 
\end{lemma}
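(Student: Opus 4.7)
The plan is to differentiate the definition~\eqref{eq:psi} in $t$, which splits $\partial_t\psi_{\varepsilon,\delta,R}(t,x)$ into a logarithmic contribution $\sigma^2\chi_R(x)\,\partial_t\varrho^{\varepsilon,\delta}(t,x)/\varrho^{\varepsilon,\delta}(t,x)$ and an interaction contribution $\chi_R(x)\int U_\varepsilon(x-y)\chi_R(y)\,\partial_t\varrho^{\varepsilon,\delta}(t,y)\,\dr y$. Both terms vanish for $\|x\|>2R$, and on the ball $\{\|x\|\le 2R\}$ the factor $U_\varepsilon(x-y)\chi_R(y)$ is uniformly bounded by some $C_R$ by continuity of $U_\varepsilon$ and compactness of $\supp(\chi_R)$. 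Hence the whole estimate reduces to two sub-goals: a $\delta$-uniform $L^\infty$ bound on $\partial_t\varrho^{\varepsilon,\delta}$, and a strictly positive lower bound on $\varrho^{\varepsilon,\delta}(t,x)$ for $\|x\|\le 2R$.

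The main obstacle is the first sub-goal, because the naive estimate $|\theta_\delta'\ast \varrho^\varepsilon|\le \|\theta_\delta'\|_{L^1}\|\varrho^\varepsilon\|_\infty$ produces an unwanted $1/\delta$ blow-up. The remedy is to transfer the time derivative onto the smoother factor, writing $\partial_t\varrho^{\varepsilon,\delta}(t,y)=(\theta_\delta\ast\partial_s\varrho^\varepsilon)(t,y)$, and to compute $\partial_s\varrho^\varepsilon$ from the continuity equation. Applying Lemma~\ref{lem:continuity} to the (time-independent) test function $y\mapsto \eta_\varepsilon(x-y)$, multiplied if necessary by a harmless temporal cutoff equal to one on $[t_1,t_2]$, yields
\[
\partial_s\varrho^\varepsilon(s,y)=-\int \ps{(\nabla\eta_\varepsilon)(y-z),\, v_s(z)}\,\dr\rho_s(z)
\]
for a.e.\ $s>0$, and hence the pointwise estimate $|\partial_s\varrho^\varepsilon(s,y)|\le \|\nabla\eta_\varepsilon\|_\infty \int\|v_s(z)\|\,\dr\rho_s(z)$, uniformly in $y$.

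The next step is to show that $\sup_{s\in[\tilde t_1,\tilde t_2]}\int\|v_s\|\,\dr\rho_s<\infty$ for every $0<\tilde t_1<\tilde t_2$. Decomposing $v_s$ via~\eqref{eq:vt}, the drift parts coming from $\nabla V$ and $\nabla U$ are bounded using the linear growth in Assumption~\ref{hyp:GM}(iii) together with $\sup_{s\le T}\int\|z\|^2\,\dr\rho_s(z)<\infty$ (which holds since $\rho\in\cP_2(\cC)$), while the entropic part equals $\sigma^2\int\|\nabla\log\varrho(s,z)\|\,\dr\rho_s(z)=\sigma^2\int\|\nabla\varrho(s,z)\|\,\dr z$ and is uniformly bounded on $[\tilde t_1,\tilde t_2]$ by~\eqref{eq:bsupdensity}. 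Restricting to $\delta\in(0,1/2]$, which suffices since the proof of Prop.~\ref{prop:helmholtz} only exploits $\delta\to 0$, the support of $\theta_\delta(t-\cdot)$ lies in $[t_1/2,t_2+t_1/2]$ for all $t\in[t_1,t_2]$, so integrating the above pointwise bound against $\theta_\delta$ and using $\int\theta_\delta=1$ gives $\|\partial_t\varrho^{\varepsilon,\delta}\|_\infty\le C_\varepsilon$, independently of $\delta$.

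The positive lower bound on the denominator then follows at once: by~\eqref{eq:binfdensity} applied to the ball of radius $2R+\varepsilon$ and the interval $[t_1/2,t_2+t_1/2]$, one has $\varrho^\varepsilon(s,x)\ge c_{R,\varepsilon}>0$ for $\|x\|\le 2R$ (since $\eta_\varepsilon(x-\cdot)$ is supported in the $\varepsilon$-ball around $x$), and this lower bound is preserved by convolution with the nonnegative kernel $\theta_\delta$ of unit mass. Combining these estimates yields $|\partial_t\psi_{\varepsilon,\delta,R}(t,x)|\le C_{R,\varepsilon}$ on $\supp(\chi_R)$ and zero elsewhere, which is the claimed bound~\eqref{eq:bounddt}.
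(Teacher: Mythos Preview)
Your proof is correct and follows the same overall route as the paper: differentiate~\eqref{eq:psi}, transfer the time derivative through the $\theta_\delta$--convolution onto $\varrho^\varepsilon$, bound $\partial_s\varrho^\varepsilon$ via the continuity equation applied to the test function $\eta_\varepsilon(x-\cdot)$, and control the denominator with~\eqref{eq:binfdensity}.

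The only difference lies in how $\partial_s\varrho^\varepsilon$ is estimated. The paper integrates by parts once more, obtaining
\[
\partial_s\varrho^\varepsilon(s,y)=\int\bigl(\ps{\nabla\eta_\varepsilon(y-z),b(z,\rho_s)}+\sigma^2\Delta\eta_\varepsilon(y-z)\bigr)\,\dr\rho_s(z),
\]
so that only the linear growth of $b$ and the first moment of $\rho_s$ enter; this bound is uniform for all $s\ge 0$, and hence the conclusion holds for every $\delta\in(0,1)$ as stated. You instead keep $v_s$ and control the entropic contribution $\sigma^2\int\|\nabla\varrho(s,\cdot)\|\,\dr\mathscr L^d$ via~\eqref{eq:bsupdensity}, which forces $s$ to stay bounded away from $0$ and leads to your restriction $\delta\le 1/2$. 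This is harmless for the application (only $\delta\to 0$ matters), but the paper's extra integration by parts is slightly more economical and yields the lemma exactly as stated.
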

By Eq.~\eqref{eq:bounddt} and by the continuity of the map $t \mapsto\6_t \varrho^\varepsilon$ (see the proof of Lem.~\ref{lem:boundcdom}),
we can expand the first term in the r.h.s. of Eq.~(\ref{eq:continuityBis}) as:
\begin{equation}
\label{eq:derivativet}
\int_{t_1}^{t_2}\int \6_t \psi_{\varepsilon,\delta,R}(t,x)  \dr \rho^\varepsilon_t(x) \dr t = \int_{t_1}^{t_2}\int \6_t \psi_{\varepsilon,\delta,R}(t,x)   \varrho^{\varepsilon,\delta}(t,x)\dr x \dr t + o_{\varepsilon,R}(\delta)\,,
\end{equation}
where $o_{\varepsilon,R}(\delta)$ represents a term which tends to zero as $\delta\to 0$, for fixed values of $\varepsilon,R$.
Note that:
  \begin{equation}
\6_t\psi_{\varepsilon,\delta,R}(t,x)  = \sigma^2\frac {\6_t \varrho^{\varepsilon,\delta}(t,x)}{\varrho^{\varepsilon,\delta}(t,x)}\chi_R(x) + \int U_\varepsilon (x-y)\chi_R(y)\chi_R(x) \6_t\varrho^{\varepsilon,\delta}(t,y)\dr y \,.\label{eq:partial_psi}
\end{equation}
Plugging this equality into~(\ref{eq:derivativet}) and noting that $U_\varepsilon$ is even (because $U$ and $\eta_\varepsilon$ are),
we obtain:
\[
    \begin{split}
      &\int_{t_1}^{t_2}\int \6_t \psi_{\varepsilon,\delta ,R}(t,x) \varrho^{\varepsilon,\delta}(t,x)\dr x\dr t\\
      & = \sigma^2\int_{t_1}^{t_2} \int \6_t \varrho^{\varepsilon}(t,x)\chi_R(x)\dr x\dr t 
      \\ &\phantom{=}+  \frac 12 \int_{t_1}^{t_2} \int\!\!\!\int  U_\varepsilon(x-y)\6_t(\varrho^{\varepsilon,\delta}(t,y)  \varrho^{\varepsilon,\delta}(t,x))  \chi_R(x)\chi_R(y)  \dr x\dr y\dr t      \\
      & =  \sigma^2 \int \varrho^{\varepsilon,\delta}(t_2,x)\chi_R(x)\dr x - \sigma^2 \int \varrho^{\varepsilon,\delta}(t_1,x)\chi_R(x)\dr x\\
     &\phantom{=}+ \frac 12 \int\int U_\varepsilon(x-y) \chi_R(x)\chi_R(y)\varrho^{\varepsilon,\delta}(t_2,x)\varrho^{\varepsilon,\delta}(t_2,y) \dr x\dr y \\
&\phantom{=}- \frac 12\int\int U_\varepsilon(x-y) \chi_R(x)\chi_R(x)\varrho^{\varepsilon,\delta}(t_1,x) \varrho^{\varepsilon,\delta}(t_1,y)  \dr x \dr y\,.
    \end{split}
  \]
  By the dominated convergence theorem, we finally obtain:
  \begin{multline} 
    \label{eq:term-partial}
 \lim_{R\to\infty}\lim_{\varepsilon\to 0} \lim_{\delta\to 0} \int_{t_1}^{t_2}\int \6_t \psi_{\varepsilon,\delta,R}(t,x)  \dr \rho^\varepsilon_t(x) \dr t =\\
     \frac 12 \int\int U(x-y) \varrho(t_2,x)\varrho(t_2,y) \dr x\dr y 
- \frac 12\int\int U(x-y) \varrho(t_1,x) \varrho(t_1,y)  \dr x \dr y\,.
\end{multline}
Putting together Eq.~(\ref{eq:lhs-helm}), (\ref{eq:psi-nabla}) and (\ref{eq:term-partial}), and passing to the limit in
the continuity equation~(\ref{eq:continuityBis}), the statement of Prop.~\ref{prop:helmholtz} follows.
\medskip

\noindent {\bf Proof of Lem.~\ref{lem:boundcdom}}.  Using Eq.~(\ref{eq:continuityBis}) and integration by parts,
  \begin{multline*}
    \varrho^\varepsilon(t_2,x)-\varrho^\varepsilon(t_1,x) \\= \int_{t_1}^{t_2}\!\! \int\ps{\nabla\eta_\varepsilon(x-y),b(y,\rho_s)} \dr \rho_s(y) \dr s  + \sigma^2\int_{t_1}^{t_2} \!\!\int \Delta\eta_\varepsilon(x-y) \dr\rho_s(y)\dr s \,.
  \end{multline*}
  Since $\rho \in \cP_2(\cC)$, $\sup_{t\in[1,T]} \|b(y,\rho_t)\| \le C(1+\norm{y}) + C\int  \sup_{t\in[1,T]} \norm{x_t} \dr \rho(x)$.
  As a consequence, $\sup_{t\in[1,T]} \|b(y,\rho_t)\| \le  C(1+\norm{y}) \,.$
Along with the observation that, for any fixed $\varepsilon$,  $\nabla \eta_\varepsilon$ and $\Delta \eta_\varepsilon$ are bounded,
it follows that $t\mapsto \varrho^\varepsilon(t,x)$ is Lipschitz continuous on $[t_1,t_2]$, and that its derivative almost everywhere is given by:
 $\6_t\varrho^\varepsilon(t,x) =\int (\ps{\nabla\eta_\varepsilon(x-y),b(y,\rho_t)}  + \Delta\eta_\varepsilon(x-y)) \dr\rho_t(y)$. 
Thus, there exists a constant $C_{\varepsilon}>0$, such that:
\[
\sup_{t\in[t_1,t_2],x\in\bR^d}\6_t \varrho^{\varepsilon} (t,x) \le C_{\varepsilon}\,.
\]
Considering the second term in the r.h.s. of Eq.~(\ref{eq:partial_psi}), the presence of the product of the compactly supported functions
$\chi_R(x)\chi_R(y)$ implies that the former is bounded in absolute value:
$$
\left| \int U_\varepsilon (x-y)\chi_R(y)\chi_R(x) \6_t\varrho^{\varepsilon,\delta}(t,y)\dr y \right|\leq C_{R,\varepsilon}\,.
$$
On the otherhand, using the lower bound \eqref{eq:binfdensity}, the first term in the r.h.s. of
Eq.~(\ref{eq:partial_psi}), is also bounded, and finally, Eq.~\eqref{eq:bounddt} follows.

\subsection{Proof of Prop.~\ref{prop:gm}}

The map $\overline{\mathscr H}: \rho\mapsto\mathscr H(\rho_\epsilon)$ is real valued and lower semicontinuous by Prop.~\ref{prop:helmholtz} and Fatou's lemma.
Moreover, for every $\rho\in \sV_2$, $\overline{\mathscr H}(\Phi_t(\rho)) - \overline{\mathscr H}(\rho) =\mathscr H(\rho_{t+\epsilon})-\mathscr H(\rho_{\epsilon}) = -\int_\epsilon^{t+\epsilon} \int \|v_s\|^2d\rho_sds$.
Therefore, $\overline{\mathscr H}(\Phi_t(\rho))$ is decreasing w.r.t. $t$, and, as such, $\overline{\mathscr H}$ is a Lyapunov function.
In addition, the identity $\overline{\mathscr H}(\Phi_t(\rho)) = \overline{\mathscr H}(\rho)$ for all $t$, is equivalent to:  $v_t=0$ $\rho_t$-a.e., for every $t\geq \epsilon$.
By Lem.~\ref{lem:continuity}, this implies that $\rho_t=\rho_\epsilon$ for all $t\geq \epsilon$.
Thus,  $\overline{\mathscr H}(\Phi_t(\rho)) = \overline{\mathscr H}(\rho)$ for all $t$, if and only if $v_\epsilon=0$ and $\rho_t=\rho_\epsilon$ for all $t$.
This means that $\overline{\mathscr H}$ is a Lyapunov function for the set $\Lambda_\epsilon$. The first point is proven.

Consider a recurrent point $\rho\in \sV_2$, say $\rho=\lim \Phi_{t_n}(\rho)$. By Prop.~\ref{prop:BC}, $\rho\in\Lambda_\epsilon$, for any $\epsilon>0$.
This means that there exists $\mu\in \cS$ such that $\rho_t=\mu$ for all $t>0$.
By continuity of the map $(\pi_0)_\#$, $\rho_0=\lim \rho_{t_n}$. Thus, $\rho_0 = \mu$.
This means that $\rho_t=\mu$ for all $t\geq 0$, which writes $\rho\in \Lambda_0$. The proof is complete.

\subsection{Proof of Prop.~\ref{GF-GM}}

Since $\beta = 1$, we obtain by Assumption~\ref{hyp:GM} that $\nabla U$ 
and $\nabla V$ are Lipschitz continuous, therefore, the functions $U$ and $V$ 
are weakly convex. 
Thus, we obtain from our assumptions that the functions $U$ and $V$ with $U$ 
being even are differentiable, weakly convex, and they satisfy the doubling 
assumption. In these conditions, the following facts hold true by 
\cite[Th.~11.2.8]{ambrosio2005gradient} (see also, \emph{e.g.},
\cite{dan-sav-10}): for each measure $\nu_0 \in \cP_2(\bR^d)$, there exists an 
unique function $t\mapsto \nu_t \in \cP_2(\bR^d)$ that satisfies the following 
properties: 
\begin{enumerate}[i)]
\item $\nu_t \to \nu_0$ as $t\downarrow 0$. 
\item $\displaystyle{\sup_{t\in [0,T]} \int \|x\|^2 \nu_t(dx) < \infty}$ 
 for each $T > 0$. 
\item The measure $\nu_t$ has a density $\eta_t = d \nu_t  / d \mathscr{L}^d$
 for each $t > 0$. This density satisfies 
 $\eta_t \in L^1_{\text{loc}}((0,\infty); W_{\text{loc}}^{1,1}(\bR^d))$. 
\item The continuity equation 
\[
\partial_t \nu_t + \nabla \cdot \left( \nu_t w_t \right) = 0 
\]
is satisfied in the distributional sense, where 
\[
w_t(x) = - \frac{\sigma^2 \nabla \eta_t(x)}{\eta_t(x)} - \nabla V(x) - 
 \int \nabla U(x-y) \eta_t(y) dy . 
\]
\item 
$\left\| w_t \right\|_{L^2(\nu_t)} \in L_{\text{loc}}^2(0,\infty)$. 

\end{enumerate}
Furthermore, the function $t\mapsto \nu_t$ is the solution of the gradient flow
in $\cP_2(\bR^d)$ of the functional $\mathscr H$ provided in the statement, and
$w_t \in - \partial \mathscr H(\nu_t)$, where $\partial \mathscr H$ is the
Fréchet sub-differential of $\mathscr H$. From the general properties of
the gradient flows detailed in \cite[Chap.~11]{ambrosio2005gradient}, one can 
then check that we can write $\nu_t = \Psi_t(\nu_0)$ where $\Psi$ is a 
semi-flow on $\cP_2(\bR^d)$.

With this at hand, all we have to do is to check that for each $\rho \in
\sV_2$, the function $t \mapsto \rho_t$ satisfies the five properties stated
above.  The first two hold true for each $\zeta \in \cP_2(\cC)$: to check the
first one, let $X\sim \zeta$. Observe that $X_t \to_{t\to 0} X_0$ by continuity
and that $\| X_t - X_0 \|^2 \leq 2 \sup_{s\in[0,1]} \| X_s \|^2$ for $t$ small,
and use the Dominated Convergence.  The second property follows from the very
definition of $\cP_2(\cC)$.  Property~3 follows from
Lem.~\ref{lem:densityBis}.  By Lem.~\ref{lem:continuity}, the continuity
equation is satisfied by the function $t\mapsto \rho_t$ with $v_t = w_t$, hence
Property~4.  Finally, Property~5 follows from Prop.~\ref{prop:helmholtz},
Equation~\eqref{eq:continuityBis-limit}. This completes the proof of 
Prop.~\ref{GF-GM}. 

{
\subsection{Proof of Prop.~\ref{prop:stabGM}}

First, we will show Eq.~\eqref{eq:dis1}. Let $\mu\in\cP_2(\bR^d)$,
\[
    \int \ps{x,b(x,\mu)}\dr\mu(x) = -\int \ps{x,\nabla V(x)}\dr\mu(x) -\int \!\!\!\int\ps{x,\nabla U(x-y)}\dr\mu(x)\dr\mu(y).\\
\]
Since $U$ is even, $\nabla U(-x) = -\nabla U(x)$. Therefore,
\[
\int \!\!\!\int\ps{x,\nabla U(x-y)}\dr\mu(x)\dr\mu(y) = \frac 12 \int \!\!\!\int\ps{x-y,\nabla U(x-y)}\dr\mu(x)\dr\mu(y).
\]
Recalling that $\ps{\nabla U(x),x} \ge -C$ and $\ps{x,\nabla V(x)} \ge \lambda \norm{x}^2$, Eq.~\eqref{eq:dis1} holds:
\[
\int \ps{x,b(x,\mu)}\dr\mu(x) \le -\lambda\int \norm{x}^2\dr\mu(x) +C\,.
\]
Eq.~\eqref{eq:dis2} is obtained by the same computation as above, where in addition, we used $\norm{\nabla U(x)}\le C(1+\norm{x})$.
Let $\mu\in\cP_2(\bR^d)$,
\[
\begin{split}
        &\int \ps{x,b(x,\mu)}\norm{x}^2\dr\mu(x)\\
        &= -\int \ps{x,\nabla V(x)}\norm{x}^2\dr\mu(x) -\int \!\!\!\int\ps{x,\nabla U(x-y)}\norm{x}^2\dr\mu(x)\dr\mu(y)\\
        & =-\int \ps{x,\nabla V(x)}\norm{x}^2\dr\mu(x) -\int \!\!\!\int\ps{x-y,\nabla U(x-y)}\norm{x}^2\dr\mu(x)\dr\mu(y)\\
        &-\int \!\!\!\int\ps{y,\nabla U(x-y)}\norm{x}^2\dr\mu(x)\dr\mu(y)\\
        &\le -\lambda \int \norm{x}^4\dr\mu(x) + C \int \norm{x}^2 \dr\mu(x) + C\int \!\!\!\int\norm{x}^2\norm{y} \dr\mu(x)\dr\mu(y) \\
        &+ C\int \!\!\!\int\norm{x}^3\norm{y} \dr\mu(x)\dr\mu(y) + C\int \!\!\!\int\norm{x}^2\norm{y}^2 \dr\mu(x)\dr\mu(y) .
\end{split}
\]
By Cauchy-Schwartz's inequality,
\[
\begin{split}
   & \int \!\!\!\int\norm{x}^3\norm{y}\dr\mu(x)\dr\mu(y) \le \int\norm{x}^2\dr\mu(x)\p{\int\norm{x}^4\dr\mu(x) }^{1/2}  \,,\\
    & \int \!\!\!\int\norm{x}^2\norm{y}\dr\mu(x)\dr\mu(y) \le \int\norm{x}^2\dr\mu(x)\p{\int\norm{x}^4\dr\mu(x) }^{1/4}\,. \\ 
\end{split}
\]
Therefore, we obtain Eq.~\eqref{eq:dis2}
\begin{multline*}
    \int \ps{x,b(x,\mu)}\norm{x}^2\dr\mu(x) \\ \le -\lambda \int \norm{x}^4\dr\mu(x) + C\p{ 1+\int \norm{x}^2 \dr\mu(x) } \p{1+ \p{\int \norm{x}^4 \dr\mu(x)}^{1/2}} .
\end{multline*}

}
\subsection{Proof of Th.~\ref{th-PW-GM}}
The convergence provided in the statement follows at once from 
Prop.~\ref{GF-GM} and Th.~\ref{th:pwConvergence}. We need to prove
that $\cS = A_2$ when $A_2 = \{ \rho^\infty \}$. For an absolutely 
continuous probability measure $d\nu(x) = \eta(x) dx \in \cP_2(\bR^d)$ with 
$\eta \in C^1(\bR^d, \bR)$, write 
\[
u_\nu(x) = -\nabla V(x)-\int \nabla U(x-y) \eta(y) dy 
   -\sigma^2\nabla \log\eta(x)\,.
\]
With this at 
hand, using Eq.~\eqref{eq:continuityBis-limit} in conjunction with 
the identity $\rho^\infty = \Psi_t(\rho^\infty)$ for each $t \geq 0$ shows
that $u_{\rho^\infty}(x) = 0$ for $\rho^\infty$--almost all $x$. 
This shows that $\rho^\infty \in \cS$. On the other hand, for 
$\nu \neq \rho^\infty$ in $\cP_2(\bR^d)$, we obtain from 
Eq.~\eqref{eq:continuityBis-limit} that the function 
$t\mapsto \mathscr H(\Psi_t(\nu))$ is strictly decreasing. Thus, 
$\int \| u_\nu \|^2 d\nu > 0$ which shows that $\nu \not\in\cS$.

\appendix

\section{Technical proofs} 

  \subsection{Proof of Prop.~\ref{prop:space}}
  \label{sec:proof-space}
Let $I\subset \bR$, we denote by $C(I,\bR^d)$ the set of continuous function from $I$ to $\bR^d$.    
 One can show, that $(\rho^n)$ is a Cauchy sequence in the complete space $(\cP_p(C([0,k],\bR^d)),W_p)$. 
  Thus, there exists a sequence of compact sets $(K_k)$ in $C([0,k],\bR^d$) such that:
  \[
    (\pi_{[0,k]})_\# \rho^n( K_k) > 1-\frac{\varepsilon}{2^k} \,,
  \]
  for all $k\in\bN^*$.
  Let
  $
    \mathcal{K} := \bigcap_{k\ge 1} \pi_{[0,k]}^{-1}(K_k)\subset \cC
  $.
  The union bound yields 
  $
    \rho^n(\cK)> 1-\varepsilon
  $.
  Referring to~\cite[Th. 2, Sec. X, Chapter 5]{bourbaki1967elements}, $\cK$ has a compact closure in $\cC$.
Hence, there exists a converging subsequence $(\rho_{\varphi_n})$ converging to $\rho \in \mathcal{P}(\mathcal{C})$. Following the proof of~\cite[Th. 6.18]{villani2008optimal}, one can readily check that $\lim_{n \to \infty} W_p((\pi_{[0,k]})_\# \rho_{n}, (\pi_{[0,k]})_\# \rho) = 0$, for every $k$. Consequently, $\lim_{n \to \infty} \mathsf{W}_p(\rho^n, \rho) = 0$, which means the completeness of $\mathcal{P}_p(\mathcal{C})$. It remains to obtain its separability.

As $\mathcal{C}$ is Polish, there exists a dense sequence $(x_n)$ in $\mathcal{C}$. Following the proof of \cite[Th. 6.18]{villani2008optimal}, one can construct a sequence $(\rho^n)$ in $\mathcal{P}_p(\mathcal{C})$ from $(x_n)$, such that $((\pi_{[0,k]})_\#\rho^n )$ is dense in $C([0,k],\mathbb{R}^d)$ for every $k$. With this result, it can be verified that $(\rho^n)$ is dense in $\mathcal{P}_p(\mathcal{C})$.

\subsection{Proof of Lem.~\ref{lem:meleard}} 
\label{sec:meleard}
   Since Prop.~\ref{prop:space} holds, $(\mathbb I(\rho^n))$ is a weak$\star$-relatively compact sequence in $\cP(\cC)$, and there exists a sequence of compact sets $(K_k)$ in $\cC$, such that
  \[
  \mathbb I(\rho^n)(K_k) > 1 -\frac{1}{k2^k}\,,
  \]
  for every $k\in\bN^*$ and every $n\in\bN^*$. Let $\varepsilon>0$. We define the relatively compact set in $\cP(\cC)$:
  \[
   \cK_\varepsilon :=  \left\{ \rho\in \cP(\cC)\, : \, \rho(K_k) > 1 -\frac{1}{k\varepsilon}\,, \, \text{for every $k\in\bN^*$, such that $k\varepsilon>1$} \right\}\,.
  \]
  The union bound and Markov's inequality yields:
  \begin{equation}\label{eq:setC}
    \proba\p{\rho^n\in \cK_\varepsilon} > 1 -\varepsilon  \,
  \end{equation}
  for every $n\in\bN^*$. 

  To be relatively compact in $\mathcal{P}_p(\mathcal{C})$, the set $\mathcal{K}_\varepsilon$ must satisfy Eq.~\eqref{eq:condUI}.
  Since the sequence $(\bI(\rho^n))$ has uniformly integrable $p$--moments, there exists a sequence $(a_{k,l})_{(k,l)\in(\bN^*)^2}$, such that  for every $l\in\bN^*$, $\lim_{k\to\infty}a_{k,l} =\infty$ , and 
  \[
   \forall (k,l)\in(\bN^*)^2,\,  \sup_{n\in\bN^*} \esp{\int \sup_{t\in [0,l]}\norm{x_t}^p \indicatrice_{\underset {t\in [0,l]}{\sup}\norm{x_t}>a_{k,l}} \dr \rho^n(x)} \le \frac {1}{kl2^{k+l}}\,.
  \]
For $\varepsilon>0$, we define a set that satisfies Eq.~\eqref{eq:condUI}:
  \[
    \mathcal U_\varepsilon := \left\{\rho\in\cP_p(\cC)\, :\,  \int \sup_{t\in [0,l]} \norm{x_t}^p\indicatrice_{\underset{t\in [0,l]} \sup \norm{x_t}> a_{k,l}} \dr \rho(x)\le \frac  1{\varepsilon kl},\, k,l\in\bN^* \right \}  \,.
  \] 
  Using Markov’s inequality and the union bound, we obtain
  \begin{equation}\label{eq:setUI}
    \bP\p{\rho^n\in \mathcal U_\varepsilon } > 1 -\varepsilon \,.
  \end{equation}
  Putting together Eq.~\eqref{eq:setC} and Eq.~\eqref{eq:setUI}, 
  \[
    \bP\p{\rho^n \in \cK_\varepsilon \cap \mathcal U_\varepsilon}          > 1-2\varepsilon                    \,.  
  \]
$\cK_\varepsilon \cap \mathcal U_\varepsilon$ is a relatively compact set in $\cP_p(\cC)$. Thus, $(\rho^n)$ is tight in $\cP_p(\cC)$.
  
{
{
\subsection{Proof of Lem.~\ref{lem:continuityG}}
\label{sec:continuity}
Given $G=G_{r,\phi,h_1,\dots,h_r,t,s,v_1,\dots,v_r} \in \cG_p$, we first want to 
show 
that $G(\rho^n) \to G(\rho^\infty)$ as $\rho^n\to\rho^\infty$ in $\cP_p(\cC)$. 
This last convergence is characterized by the fact that 
$\rho^n \to \rho^\infty$ in $\cP(\cC)$, and that the sequence $(\rho^n)$ has 
uniformly integrable $p$--moments defined by~\eqref{eq:condUI}.
We write $G(\rho^n) = \int g(x,\rho^n) d\rho^n(x)$, where
for \(x\) in \(\cC\) and $\rho\in\cP_p(\cC)$:
\begin{multline*}
  g(x,\rho) := \left(\phi(x_t) -\phi(x_s) -\right . \\ \left .\int_s^t\left(\langle\nabla\phi(x_u),b(x_u,\rho_u)\rangle +\sigma(x_u,\rho_u)^TH_\phi(x_u)\sigma(x_u,\rho_u)\right){d} u\right)h(x)\,,
\end{multline*}
and $h(x):=   \prod_{j=1}^r h_j(x_{t_j})$.

We claim that $g$ is a continuous bounded function on $\cC\times \overline{\{\rho^n\,:\, n\in\bN \}}$. The continuity is given by the assumptions on $b$.
Using Cauchy-Schwartz inequality, we state a useful inequality:
\begin{equation*}
  \abs{g(x,\rho)} \le C\p{1 + \int_s^t \norm{\nabla\phi(x_u)}\norm{b(x_u,\rho_u)} \dr u}\,,
\end{equation*}
where $C=\norm{h}_\infty \max\p{2\norm{\phi}_\infty + \norm{\sigma}_\infty^2(t-s)\norm{H_\phi}_\infty, 1}$.
Since $\phi$ is compactly supported, by Assumption~\ref{hyp:b}:
\[
b(x,\rho^n)\le C\p{1+ t\sup_{n\in\bN}\int  \sup_{u\in[0,t]} \norm{y_u}\dr \rho^n(y) }.
\]
The sequence $(\rho^n)$ has uniformly integrable $p$-moments in $\cP_p(\cC)$, consequently we obtain the bound:
\[
\sup_{x\in\cC, n\in\bN\cup\{\infty\}} b(x,\rho^n) <\infty\,.
\]

Let $\varepsilon>0$.
Since, $\rho^n \to \rho^\infty$ in $\cP_p(\cC)$, the set $\overline{\{\rho^n\,:\, n\in\bN \}}$ is a compact subspace of $\cP_p(\cC)$. Hence, there exists a compact subspace $\cK\subset \cC$ satisfying
\[
    \sup_{n\in\bN\cup\{\infty\}} \rho^n(\cK^c) \le \varepsilon, 
\]
where $\cK^c$ denotes set of function $x\in\cC$ that doesn't belong to $\cK$.
By Stone-Weierstrass's theorem, there exits $k_\varepsilon\in\bN^*$ and continuous bounded functions $(f_i,h_i)_{i\in [k_\varepsilon]} \in (C(\cC,\bR)\times C(\overline{\{\rho^n\,:\, n\in\bN \}},\bR))^{k_\varepsilon}$  satisfying 
\[
\forall (x,n)\in \cK\times \bN\cup\{\infty\}, \quad\abs{\sum_{i\in[k_\varepsilon]} f_i(x)h_i(\rho^n) - g(x,\rho^n) } \le \varepsilon.
\]
Note that for $n\in\bN$,
\begin{multline}\label{eq:decomposition}
    \abs{G(\rho^n) - G(\rho^\infty)} \le \abs{ G(\rho^n) - \sum_{i\in[k_\varepsilon]}\int f_i(x)\dr\rho^n(x)h_i(\rho^n)  } \\ + \abs{\sum_{i\in[k_\varepsilon]}\int f_i(x)\dr\rho^\infty(x)h_i(\rho^\infty) -G(\rho^\infty)} \\
    +\abs{\sum_{i\in[k_\varepsilon]}\int f_i(x)\dr\rho^n(x)h_i(\rho^n)  - \sum_{i\in[k_\varepsilon]}\int f_i(x)\dr\rho^\infty(x)h_i(\rho^\infty) }.
\end{multline}
For $n\in \bN\cup\{\infty\}$, we decompose $G(\rho^n)$ as follows
\begin{multline*}
    G(\rho^n) = \int \indicatrice_{x\in\cK} \p{g(x,\rho^n)-\sum_{i\in[k_\varepsilon]} f_i(x)h_i(\rho^n)   } \dr\rho^{n}(x) \\ +\!\int \!\indicatrice_{x\in\cK^c} \p{g(x,\rho^n)-\sum_{i\in[k_\varepsilon]} f_i(x)g_i(\rho^n)   } \dr\rho^{n}(x)   +\sum_{i\in[k_\varepsilon]}\int f_i(x)\dr\rho^n(x)h_i(\rho^n).
\end{multline*}
For every $\varepsilon>0$, since $g$ is bounded, we obtain
\begin{equation*}
    \sup_{n\in\bN \cup \{\infty\}}\abs{ G(\rho^n) - \sum_{i\in[k_\varepsilon]}\int f_i(x)\dr\rho^n(x)h_i(\rho^n)  } \le 2\varepsilon \,.
\end{equation*}
Consequently, using the latter result in Eq.~\eqref{eq:decomposition}, we obtain
\begin{multline*}
     \abs{G(\rho^n) - G(\rho^\infty)} \\ \le 4\varepsilon  +\abs{\sum_{i\in[k_\varepsilon]}\int f_i(x)\dr\rho^n(x)h_i(\rho^n)  - \sum_{i\in[k_\varepsilon]}\int f_i(x)\dr\rho^\infty(x)h_i(\rho^\infty) }.
\end{multline*}
Since, $f_i$ and $h_i$ are continuous bounded functions, we obtain for every $\varepsilon>0$
\[
\underset{n\to\infty}{\lim\sup}  \abs{G(\rho^n) - G(\rho^\infty)} \le 4\varepsilon\,,
\]
which concludes the proof.

}}

\bibliography{bib}
\bibliographystyle{alpha}

\end{document}